\renewcommand{\Euler}{{\pmb \gamma}}
\newtheoremstyle{sltheorems}% name
{10pt}%  Space above
{6pt}%  Space below
{\slshape}% Body font
{}% Indent amount (empty = no indent, \parindent = para indent)
{\bfseries}% Thm head font
{.}%Punctuation after thm head
{.5em}% Space after thm head: " " = normal interword space;
\theoremstyle{sltheorems} 
\newtheorem{Thm}{Theorem}
\newtheorem{CThm}{Classical Theorem}
\newtheorem*{Thm*}{Theorem}
\newtheorem{conj}{Conjecture}
\newtheorem{Defi}{Definition}
\newtheorem{prop}{Proposition}
\newtheorem{lem}{Lemma}
\newtheorem{cor}{Corollary}
\newtheorem{Rem}{Remark}
\newtheorem{Speculation}{Speculation}
\newcommand{\p}{\partial}
\newcommand{\C}{\mathbb{C}}
\newcommand{\cal}{\mathcal}
\let\@@pmod\pmod
\DeclareRobustCommand{\pmod}{\@ifstar\@pmods\@@pmod}
\def\@pmods#1{\mkern4mu({\operator@font mod}\mkern 6mu#1)}
\newcommand{\be}{\begin{equation}}
\newcommand{\ee}{\end{equation}}
\newcommand{\bal}{\begin{align*}}
\newcommand{\ea}{\end{align*}}
\newcommand{\eal}{\ensuremath{\end{align*}}}
\newcommand{\bea}{\begin{eqnarray}}
\newcommand{\eea}{\end{eqnarray}}
\newcommand{\eps}{\ensuremath{\varepsilon}}
\renewcommand{\AA}{\mathcal{A}}
\renewcommand{\le}{\leqslant}
\renewcommand{\ge}{\geqslant}
\renewcommand{\leq}{\leqslant}
\renewcommand{\geq}{\geqslant}
\DeclareMathAlphabet{\curly}{U}{rsfs}{m}{n}
\numberwithin{Thm}{section}
\numberwithin{conj}{section}
\numberwithin{Defi}{section}
\numberwithin{prop}{section}
\numberwithin{prop}{section}
\numberwithin{lem}{section}
\numberwithin{cor}{section}
\numberwithin{Rem}{section}
\numberwithin{Quest}{section}
\numberwithin{Speculation}{section}
\patchcmd{\env@cases}{1.2}{1}{}{}
\begin{document}
\title[Euler--Kronecker constants of maximal real cyclotomic fields]{Relative class numbers and Euler--Kronecker constants of
maximal real cyclotomic subfields}
\author[N. Kandhil, A. Languasco, P. Moree, S. Saad Eddin and A. Sedunova]{Neelam Kandhil, Alessandro Languasco, Pieter Moree, \\ Sumaia Saad Eddin 
and Alisa Sedunova}

\subjclass[2010]
{Primary 11N37, 
% Asymptotic results on arithmetic functions
11R18,
%(Cyclotomic extensions)
11R29; 
%Class numbers, class groups, discriminants
Secondary 11R47, 
%Other analytic theory
11Y60}
%(Evaluation of number-theoretic constants)

\keywords{Euler--Kronecker constants, (maximal real) cyclotomic fields, Kummer conjecture, class number, Siegel zero, Dirichlet $L$-series}
%\today
\date{}

\begin{abstract}
\noindent
The Euler--Kronecker constant of a number field $K$ is the ratio of the constant and the residue of
the Laurent series of the Dedekind zeta function $\zeta_K(s)$ at $s=1$.
We study the distribution of the Euler--Kronecker constant $\gamma_q^+$ of the maximal real subfield of  
$\mathbb Q(\zeta_q)$ as $q$ ranges over the primes. Further, we 
consider the distribution of $\gamma_q^+-\gamma_q$, with $\gamma_q$ the Euler--Kronecker 
constant of  $\mathbb Q(\zeta_q)$ 
and show how it is connected with Kummer's conjecture, 
which predicts the asymptotic growth of the 
relative class number of $\mathbb Q(\zeta_q)$. We improve, for example, the known results on the bounds on average
for the Kummer ratio and we prove analogous sharp bounds
for $\gamma_q^+-\gamma_q$. The methods employed are partly inspired by those used by 
Granville (1990) and Croot and Granville (2002) to investigate Kummer's conjecture. 
We supplement our theoretical findings with numerical illustrations to reinforce our conclusions.
\end{abstract}

\maketitle

\setcounter{tocdepth}{1}
\ifthenelse{\boolean{plots_included}}{}{\mbox{}\vspace{-1cm}\tableofcontents}

\section{Introduction}

Let $K$ be a number field, ${\cal O}$ its ring of integers and $s$ a complex variable.
Then for $\Re(s)>1$ the 
\emph{Dedekind zeta function} is defined as
\begin{equation*}
%\label{KEuler}
\zeta_K(s)=\sum_{\mathfrak{a}} \frac{1}{N{\mathfrak{a}}^{s}}
=\prod_{\mathfrak{p}}\frac{1}{1-N{\mathfrak{p}}^{-s}},
\end{equation*}
where $\mathfrak{a}$ ranges over 
the non-zero ideals in ${\cal O}$, 
$\mathfrak{p}$ ranges over the prime ideals in ${\cal O}$, and $N{\mathfrak{a}}$ denotes the 
\emph{absolute norm} of $\mathfrak{a}$,
that is the index of $\mathfrak{a}$ in $\cal O$.
It is known that $\zeta_K(s)$ can be analytically continued to $\C \setminus \{1\}$,
and that it has a simple pole at $s=1$. 
Observe that $\zeta_{\mathbb{Q}}(s)$ equals $\zeta(s)$, the Riemann zeta function.
Further, around $s=1$ we have the Laurent expansion
\begin{equation*}
%\label{laurent}
\zeta_K(s)=\frac{\alpha_K}{s-1}+c_K+c_1(K)(s-1)+c_2(K)(s-1)^2+\ldots,
\end{equation*}
where $\alpha_K$ is a positive real number carrying a lot of arithmetic information
on $K$.
The constants $c_{\mathbb Q}$ and $c_j(\mathbb Q)$  are 
called \emph{Stieltjes constants}. 
In particular, 
we have $c_{\mathbb Q}=\Euler =0.57721566\dotsc$, 
the \emph{Euler--Mascheroni constant}, see, e.g., Lagarias \cite{Lagarias} for a  survey of 
related material. 
The ratio $\gamma_K=c_K/\alpha_K$ is called the \emph{Euler--Kronecker constant} (recall that the 
Kronecker
limit formula expresses $\gamma_K$ in terms of special values of the Dedekind 
$\eta$-function in case $K$ is imaginary quadratic). In 1903, Landau \cite{Landaugamma} showed that
$$
\gamma_K = \lim_ {x \to \infty} \Bigl( \frac{1}{\alpha_K} \sum_{N \mathfrak{a} \leq x}  \frac{1}{N \mathfrak{a}} - \log x \Bigr).
$$
Furthermore, we have, see, e.g., Hashimoto et al.\,\cite{HIKW},
\begin{equation}
\label{hickup}
\gamma_K=\lim_{x\to  \infty}\Bigl(\log x - \sum_{N\mathfrak{p}\le x}\frac{\log N\mathfrak{p}}{N\mathfrak{p}-1}\Bigr),
\end{equation}
making clear that the existence of (many) 
prime ideals in ${\cal O}_K$ of small norm has a decreasing effect on $\gamma_K$.
Yet another expression for $\gamma_K$ is 
\begin{equation*}
%\label{define}
\gamma_K=\lim_{s \to 1^+}\,\Bigl(\frac{\zeta'_K(s)}{\zeta_K(s)}+\frac{1}{s-1}\Bigr),
\end{equation*}
which shows that $\gamma_K$ is the constant part in the Laurent series of the logarithmic derivative
of $\zeta_K(s)$. 
By defining $\Lambda_K$ via $-\zeta_K'(s)/\zeta_K(s) = \sum_{n=1}^\infty \Lambda_K(n)/n^s$, Dixit and Murty \cite{DixitM2023} proved that 
\[
1-\gamma_K = \int_{1}^\infty \Bigl( \sum_{n \le x}\Lambda_K(n)-x \Bigr) \frac{dx}{x^2},
\]
establishing  a connection between 
$\gamma_K$ and the \emph{prime ideal theorem} in the form $$\sum_{n \le x}\Lambda_K(n)\sim x.$$ 
\par Ihara \cite[Theorem~1 and Proposition~3]{I} proved that under
the Generalized Riemann Hypothesis (GRH) for Dedekind zeta functions
there are absolute constants
$c_1,c_2>0$ such that
\begin{equation}
\label{iha}
-c_1\log d_K \le \gamma_K \le c_2\log \log d_K,
\end{equation}
where 
$d_K$ is the absolute value of the discriminant $K/\mathbb Q$.

\par In case $q$ is an odd prime and $K=\mathbb Q(\zeta_q)$ 
a \emph{prime cyclotomic field}, there is an extensive literature on the distribution 
of $\gamma_q=\gamma_{\mathbb Q(\zeta_q)}$ and $\gamma_q^+=\gamma_{\mathbb Q^{+}(\zeta_q)}$, with
${\mathbb Q}^+(\zeta_q)={\mathbb Q}(\zeta_q+\zeta_q^{-1})$ being the \emph{maximal 
real cyclotomic field} (from now on the notation $q$ is exclusively used for odd primes).
We provide a brief survey in the next subsection. 

\subsection{The prime cyclotomic field and its maximal real subfield}

\noindent In the case of a prime cyclotomic field $\mathbb Q(\zeta_q)$ we have
\begin{equation}
\label{oud}
\zeta_{\mathbb Q(\zeta_q)}(s)=\zeta(s)\prod_{\chi\ne \chi_0}L(s,\chi),
\end{equation}
where $\chi$ runs over the non-principal characters modulo $q$,
leading by logarithmic differentiation to
\begin{equation}
\label{gammaq}
\gamma_q=\Euler+\sum_{\chi\ne \chi_0}\frac{L'(1,\chi)}{L(1,\chi)}.
\end{equation}
Thus the behaviour of $\gamma_q$ is related to that of $L(s,\chi)$ and $L'(s,\chi)$
at $s=1$. (Here and in the rest of the paper we often use the fundamental
fact that $L(1,\chi)\ne 0$.)
\par From (\ref{iha}) and
the fact that the discriminant of 
$\mathbb Q(\zeta_q)$ is $(-1)^{(q-1)/2}q^{q-2}$, we infer that, under GRH, $-c_1q\log q\le \gamma_q\le c_2\log (q\log q)$. 
Assuming GRH
for Dedekind zeta functions of cyclotomic fields, Ihara, Murty and Shimura \cite{IMS} proved that $\gamma_q \ll (\log q)^2$.
In 2010, Badzyan \cite{ABD} improved this result and proved that, under GRH, $\gamma_q \ll (\log q)(\log \log q).$
In 2011, Kumar Murty \cite{KM} proved that 
$\gamma_q\ll \log q$ 
is unconditionally true on average. We will now formulate this result more precisely, using the notation 
$\pi^*(Q)$
for the difference $\pi(Q)-\pi(Q/2)$, where
$\pi(x)=\sum_{p\le x}1$ is the \emph{prime counting function} and here and in the sequel the notation $p$ is exclusively used to indicate primes.

\begin{Thm*}{\rm (Kumar Murty \cite{KM})}
We have
$$\sum_{Q/2 < q \le Q}|\gamma_q| \ll  \pi^{*}(Q) \log{Q},$$
where the sum is over prime numbers $q$ in the interval $(Q/2, Q]$.
\end{Thm*}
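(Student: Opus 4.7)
The plan is to begin from the expansion (\ref{gammaq}), which writes $\gamma_q-\Euler$ as a sum of $L'/L$ values over the non-principal characters modulo $q$. This reduces the problem to showing that
$$\sum_{Q/2<q\le Q}\bigg|\sum_{\chi\ne\chi_0\pmod q}\frac{L'(1,\chi)}{L(1,\chi)}\bigg|\ll \pi^{*}(Q)\log Q.$$
For each non-principal $\chi$, I would replace $L'(1,\chi)/L(1,\chi)$ by the truncated sum $-\sum_{n\le N}\Lambda(n)\chi(n)/n$, up to an error $E_N(\chi)$ coming from the zeros of $L(s,\chi)$ near $s=1$, via a standard Perron/contour device. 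A natural choice is $N=Q^2(\log Q)^B$, so that the Bombieri--Vinogradov theorem becomes available.

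Summing the truncated part over $\chi\ne\chi_0$ and applying character orthogonality modulo the prime $q$, the main term collapses to
$$-(q-1)\sum_{\substack{n\le N\\ q\,\mid\, n-1}}\frac{\Lambda(n)}{n}\;+\;\sum_{\substack{n\le N\\ q\,\nmid\, n}}\frac{\Lambda(n)}{n}.$$
The second sum equals $\log N+O(1)$ by Mertens' theorem, while on average in $q$ the first equals $\log N+O(\log q)$ via the Bombieri--Vinogradov theorem combined with partial summation (using that $\phi(q)=q-1$). The two contributions therefore cancel to leading order and leave a piece of size $O(\log q)$ per prime $q$, which when summed over the dyadic range produces exactly the target $\pi^{*}(Q)\log Q$.

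The remaining and most delicate step is to control the zero-contribution errors on average,
$$\sum_{Q/2<q\le Q}\sum_{\chi\ne\chi_0\pmod q}|E_N(\chi)|\ll \pi^{*}(Q)\log Q.$$
For this I would invoke a Gallagher-type log-free zero-density estimate for Dirichlet $L$-functions with moduli in $(Q/2,Q]$, which pushes essentially all zeros of $L(s,\chi)$ out of a small neighbourhood of $s=1$, together with Siegel's theorem (ineffective but harmless here) to dispose of the at most one potential exceptional character per modulus. The chief obstacle is calibrating the truncation parameter $N$: take $N$ too small and the cancellation of the two orthogonality main terms is too weak to produce $O(\log q)$ per $q$; take $N$ too large and the zero-contribution error inflates past the target $\pi^{*}(Q)\log Q$. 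Balancing these two competing demands is what forces the use of the full strength of Bombieri--Vinogradov alongside a log-free density bound.
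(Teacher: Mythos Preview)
Your broad outline matches Murty's argument (and the paper's adaptation of it in Section~\ref{sec:meangamma_plus}): approximate $L'(1,\chi)/L(1,\chi)$ by a finite sum over $\Lambda(n)\chi(n)/n$, control the approximation error on average via zero-density/exceptional-zero input, and handle the main term by character orthogonality followed by Bombieri--Vinogradov. But there is a genuine gap in your treatment of the main term.

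After orthogonality and partial summation, the quantity you must control on average is
\[
\int_{1}^{N}\frac{(q-1)\psi(u;q,1)-\psi(u)}{u^2}\,du,
\]
and Bombieri--Vinogradov only yields information on $(q-1)\psi(u;q,1)-\psi(u)$ when $q\le u^{1/2-\varepsilon}$, i.e.\ when $u$ exceeds roughly $q^{2}$. With your choice $N=Q^{2}(\log Q)^{B}$ and $q\in(Q/2,Q]$, essentially the entire integration range $[q,N]$ lies \emph{below} the Bombieri--Vinogradov threshold, so the claimed cancellation ``$\log N+O(\log q)$ on average'' does not follow from BV alone. In Murty's argument (mirrored in Section~\ref{sec:meangamma_plus}) this short range is not negligible: it is handled by the sieve bound of Lemma~\ref{lem1} on prime pairs $p=tq+1$, which is what makes the estimate $S_1(Q)\ll\pi^{*}(Q)\log Q$ go through. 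This sieve step is a substantial portion of the work and is absent from your sketch.

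Two further points. First, the paper (following Murty) does not use a sharp truncation at $N$ but the Ces\`aro-smoothed approximation $\Phi_\chi(x)=\frac{1}{x}\int_1^x\bigl(\sum_{n\le t}\Lambda(n)\chi(n)/n\bigr)dt$, and lets $x$ be large (effectively $x>Q^3$); this smoothing is what makes the zero-contribution estimate (Murty's Proposition~3.1) tractable, and taking $x$ large ensures BV genuinely applies on the tail $u>Q^3$. Second, your log-free zero-density idea for the error $E_N(\chi)$ is in the right spirit, but with a sharp cutoff at $N\approx Q^{2}$ the explicit-formula error involves $\sum_\rho N^{\rho-1}/\rho$ with $N$ only a power of $q$, and squeezing $\ll\log q$ per $q$ out of this on average is delicate; the smoothing in $\Phi_\chi$ is precisely what Murty uses to avoid this difficulty.
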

\par For the maximal real subfield of the cyclotomic field one has analogously
to \eqref{oud}
the factorization (see \cite[ch.~4]{Wbook})
\begin{equation}
\label{maxreal}
\zeta_{\mathbb Q(\zeta_q)^+}(s)=\zeta(s)\prod_{\substack{\chi \neq \chi_0 \\ \chi(-1)=1}} L(s,\chi),
\end{equation}
and hence
\begin{equation}
\label{gammaplus}
\gamma_q^+=\Euler+\sum_{\substack{\chi \neq \chi_0 \\ \chi(-1)=1}}\frac{L'(1,\chi)}{L(1,\chi)}.
\end{equation}
\par {}From (\ref{iha}) and
the fact that the discriminant of 
$\mathbb Q^+(\zeta_q)$ is $q^{(q-3)/2}$, we infer that, under GRH, $-c_1q\log q\le \gamma_q^+\le c_2\log (q\log q)$. 
This was sharpened by Ihara et al.~\cite{IMS}, who showed that under GRH, we have $\gamma_q^+\ll (\log q)^2.$
Unconditionally, they proved that $\gamma_q^+\ll q^{\varepsilon}$ for any $\varepsilon >0$. 
Ihara \cite[Theorem~1]{I} showed that under GRH we have $\gamma_q^+\le (2+o(1))\log q$.
Unconditionally, we show that on average also $\gamma_q^+\ll \log q$. Our proof follows the approach of Kumar Murty.

\begin{Thm}
\label{VK1}
We have 
\[
\sum_{Q/2 < q \le Q}|\gamma_q^+|\ll \pi^*(Q)\log Q.
\]
\end{Thm}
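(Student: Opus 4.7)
The plan is to follow the strategy of V.~Kumar Murty~\cite{KM}, now restricted to the subgroup of even characters modulo $q$. Since $\gamma = O(1)$, by~\eqref{gammaplus} it suffices to bound
\[
\sum_{Q/2 < q \le Q}\Bigl|\sum_{\substack{\chi\ne\chi_0\\\chi(-1)=1}}\frac{L'(1,\chi)}{L(1,\chi)}\Bigr|\ll \pi^*(Q)\log Q.
\]
The first step is to approximate each ratio $L'(1,\chi)/L(1,\chi)$ by a truncated Dirichlet series $-\sum_{n\le y}\Lambda(n)\chi(n)/n$ with $y$ a suitably chosen power of $q$, and to control the truncation remainder by means of the classical zero-free region of $L(s,\chi)$.

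The second step is to swap the order of summation and exploit orthogonality on the even subgroup: for $q$ prime and $(n,q)=1$ one has
\[
\sum_{\substack{\chi\ne\chi_0\\\chi(-1)=1}}\chi(n) \;=\; \tfrac{q-1}{2}\,\mathbf 1_{n\equiv \pm 1\,(q)} - 1,
\]
while the sum vanishes when $q \mid n$. Substituting, the truncated double sum becomes
\[
-\tfrac{q-1}{2}\sum_{\substack{n\le y\\n\equiv\pm 1\,(q)}}\frac{\Lambda(n)}{n}+\sum_{\substack{n\le y\\(n,q)=1}}\frac{\Lambda(n)}{n}.
\]
By Mertens' theorem together with the prime number theorem in arithmetic progressions, the two leading ($\sim\log y$) contributions cancel exactly, and only error terms attached to the distribution of primes in the residue classes $\pm 1 \pmod q$ survive.

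The final step is to sum over $q\in(Q/2,Q]$ and estimate the aggregated error terms: for an appropriate choice of $y$, the Bombieri--Vinogradov theorem yields the desired saving and produces the bound $\pi^*(Q)\log Q$. The chief technical obstacle lies in the first step: a character $\chi$ with an exceptional (Siegel) zero renders $L(1,\chi)$ anomalously small, so its individual truncation remainder may be large. As in~\cite{KM}, one isolates at most one such exceptional character per modulus $q$ and verifies that the aggregate contribution of these characters as $q$ ranges over $(Q/2,Q]$ remains negligible, leaving the average bound intact.
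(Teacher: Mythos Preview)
Your outline follows the paper's strategy faithfully (it is essentially Kumar Murty's argument restricted to even characters), but there is one genuine gap. After the orthogonality step you are left with the quantity
\[
\sum_{Q/2<q\le Q}\Bigl|\,\frac{q-1}{2}\sum_{\substack{n\le y\\ n\equiv\pm1\,(q)}}\frac{\Lambda(n)}{n}-\sum_{\substack{n\le y\\(n,q)=1}}\frac{\Lambda(n)}{n}\Bigr|,
\]
and by partial summation this reduces to bounding integrals of $\sum_{Q/2<q\le Q}\bigl|(q-1)\psi(u;q,\pm1)-2\psi(u)\bigr|$ against $du/u^2$. You invoke Bombieri--Vinogradov for this, but Bombieri--Vinogradov only applies when the moduli are at most $u^{1/2-\varepsilon}$, i.e.\ when $u\ge Q^{2+\varepsilon}$. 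The range $q\le u\le Q^{2+\varepsilon}$ (the paper uses $u\le Q^3$) is \emph{not} covered by Bombieri--Vinogradov and contributes the bulk of the work.

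In the paper this short range is handled by a direct argument: one bounds $\sum_{Q/2<q\le Q}(q-1)\psi(u;q,1)$ by writing $p-1=tq$ with $t\le 2u/Q$ and applying an upper-bound sieve (Lemma~\ref{lem1}) to count prime pairs $(p,q)$ with $p=tq+1$; the resulting sum over $t$ is then estimated via $\sum_{t\le x}1/\varphi(t)\ll\log x$. This prime-pair sieve step is the missing ingredient in your sketch and cannot be replaced by the level-of-distribution input alone. A second, more minor, point: the paper uses the Ces\`aro-smoothed approximation $\Phi_\chi(x)=\tfrac1x\int_1^x\sum_{n\le t}\Lambda(n)\chi(n)n^{-1}\,dt$ rather than the raw truncation, and then quotes \cite[Proposition~3.1]{KM} directly for the remainder (including the Siegel-zero contribution); your unsmoothed version can be made to work, but you should be aware that the citation to \cite{KM} is for the smoothed quantity.
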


{F}rom this result and that of Kumar Murty we immediately obtain that
 \[
\sum_{Q/2 < q \le Q}|\gamma_q-\gamma_q^+|\ll \pi^*(Q)\log Q.
\]  
We prove in Theorem \ref{kappaqzeroEH2} that,
assuming a strong form of Elliott--Halberstam conjecture \ref{EH2}, this upper 
bound can be improved to $o(\pi^*(Q)\log Q)$, as $Q$ tends to infinity.

\subsection{The difference $\gamma_q^+-\gamma_q$ and Siegel zeros}
Let $K \ne \mathbb{Q}$ be an algebraic number field having $d_K$ as its absolute discriminant over the rational numbers. Then, see
Stark \cite[Lemma~3]{stark}, $\zeta_K(s)$ has at most one zero in the region in the complex plane determined by
$$
 \Re(s) 
\geq
1 - \frac{1}{4 \log d_K},
\quad \quad
|\Im(s)| \leq \frac{1}{4 \log d_K}.
$$
If such a zero exists, it is
real, simple and often called 
\emph{Siegel zero}. 
Dixit and Murty  \cite{DixitM2023}\footnote{There is an oversight in \cite[Theorem 1.2]{DixitM2023}. 
The Siegel zero contribution is without the 1/2 factor, see 
\cite{Dixit-errata}.}
proved that if $\zeta_K(s)$ has a Siegel zero $\beta_0$,  
then
\begin{equation}
\label{Dixiland}
\gamma_K = \frac{1}{\beta_0(1-\beta_0)}+O(\log d_K),
\end{equation}
and 
$\gamma_K=O(\log d_K)$ otherwise.
Note that \eqref{Dixiland} can be alternatively rewritten as
$$
\gamma_K=\frac{1}{1-\beta_0}+O(\log d_K).
$$
It follows from this
estimate that
$$\gamma^+_q - \gamma_q =\frac{
\delta_{q}}{\beta_0-1}+O(q\log q),$$ 
where $\delta_{q}$ is $1$ if one of the odd Dirichlet $L$-functions of conductor $q$ 
has a Siegel zero $\beta_0$, and is zero otherwise.

The following theorem greatly improves on this result.
We will state it using a normalized version of $\gamma_q^+-\gamma_q$,
since this quantity will play an important role into the subsequent part 
of our work.
{F}rom now on we will denote
\begin{equation}
\label{gq-def}
\kappa(q)=\frac{\gamma_q^+-\gamma_q}{\log q}
\end{equation}
and use the notation $\log_k x$ to indicate
the $k$ times iterated logarithm: 
$\log_k x= \underbracket{\log\dotsm \log}_{k\, \textrm{times}}(x).$

\begin{Thm}
\label{gq-direct}
Given any function $\ell(q)$ tending  monotonically 
to infinity with $q$, there is an
effectively computable prime
$q_0$ (possibly depending on $\ell$)  such that 
\[
\Big|\kappa(q) + \frac{\delta_q}{\beta_0(1-\beta_0) \log q}\Big| 
<  
\log_2 q 
+ ( \delta_{q}+1) \ell(q) + \log \ell(q)
-0.59,
\]
where $\delta_q = 1$ if there exists a Siegel zero $\beta_0$ among the family 
of Dirichlet $L$-series $L(s, \chi)$, with $\chi$ any odd character modulo $q$,
and is zero otherwise.
In addition, there is an effectively computable prime $q_1$ such that if we assume
that the Riemann Hypothesis holds  for every Dirichlet $L$-series $L(s,\chi)$ with $\chi$ an 
odd character modulo $q$ for  $q\ge q_1$,  then
$$
|\kappa(q) | <  \log_2 q + 1.41.
$$
\end{Thm}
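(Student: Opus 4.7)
The starting point is to combine identities \eqref{gammaq} and \eqref{gammaplus}, which yield
\[
\gamma_q^+ - \gamma_q \;=\; -\sum_{\chi(-1)=-1}\frac{L'(1,\chi)}{L(1,\chi)},
\]
so that $\kappa(q)\log q$ is, up to a sign, the total logarithmic derivative at $s=1$ of the odd Dirichlet $L$-functions modulo $q$. My plan is first to \emph{extract} the hypothetical Landau--Siegel contribution from this sum, and then to \emph{bound} what remains via quantitative Mertens-type estimates for primes in the residue classes $\pm1\pmod q$, in the spirit of the Granville/Croot--Granville methods cited in the introduction.

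To handle the archimedean and zero-pole contributions efficiently (naive summation of individual Hadamard expansions of $L'/L(1,\chi)$ would produce a fatal $O(q\log q)$ term from the $-\tfrac12\log(q/\pi)$ piece alone, repeated for every one of the $(q-1)/2$ odd characters), I would instead use character orthogonality to transform the sum into a prime-power sum: for $\Re(s)>1$,
\[
\sum_{\chi(-1)=-1}\frac{L'(s,\chi)}{L(s,\chi)} \;=\; -\frac{q-1}{2}\Bigl(\sum_{n\equiv 1\,(q)}\frac{\Lambda(n)}{n^s}-\sum_{n\equiv -1\,(q)}\frac{\Lambda(n)}{n^s}\Bigr).
\]
As $s\to 1^+$, the divergent $(\log x)/\phi(q)$ main terms cancel between the two residue classes, while the $k\ge 2$ prime-power contribution is $O(1/q)$ uniformly in $q$. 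The problem thereby reduces to a sharp evaluation of the signed prime sum
\[
\Delta_q(x)\;=\;\sum_{\substack{p\le x\\p\equiv 1\,(q)}}\frac{\log p}{p}\;-\;\sum_{\substack{p\le x\\p\equiv -1\,(q)}}\frac{\log p}{p}
\]
for a well-chosen cutoff $x$ of size roughly $\exp\bigl(\ell(q)\log q\bigr)$.

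For the evaluation of $\Delta_q(x)$, I would feed the standard explicit formula for $\psi(x;q,\pm 1)$ through partial summation. A hypothetical Siegel zero $\beta_0$ of some odd $\chi^*$ (so $\chi^*(-1)=-1$) contributes the term $-2x^{\beta_0}/(\phi(q)\beta_0)$ to $\psi(x;q,1)-\psi(x;q,-1)$; using $\int_1^\infty y^{\beta_0-2}\,dy=1/(1-\beta_0)$ and then multiplying by $(q-1)/2$ and dividing by $\log q$, this translates into exactly the $-\delta_q/(\beta_0(1-\beta_0)\log q)$ that is transferred to the left side of the inequality. The non-Siegel zeros lie unconditionally in a standard effective zero-free region of Heath-Brown/Pintz type, producing the residual contribution bounded by $\log q\cdot(\log_2 q+(\delta_q+1)\ell(q)+\log\ell(q)-0.59)$ after careful accounting; under the stated RH assumption the residual sum is $O(\log q\cdot \log_2 q)$ by a Littlewood-type estimate and yields the cleaner $\log_2 q+1.41$ bound. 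I expect the main difficulty to be the uniform tracking of the sharp numerical constants $-0.59$ and $1.41$: these require simultaneously balancing the effective Mertens constants for the classes $\pm1\bmod q$, the $\tfrac12\Euler-\tfrac12\log\pi$ contributions from the $\Gamma$-factors in the explicit formula, and an optimisation in $\ell(q)$ so that the slack $(\delta_q+1)\ell(q)+\log\ell(q)$ absorbs both the loss incurred by the exceptional character (when $\delta_q=1$) and the dependence of the zero-free region on $q$.
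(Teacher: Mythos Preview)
Your overall framework is sound and matches the paper: convert $\kappa(q)\log q$ into the signed prime(-power) sum over the residue classes $\pm1\pmod q$, extract the Siegel-zero contribution via the explicit formula, and bound the rest. However, two points in your plan diverge from what actually works.

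First, the prime-power piece is not negligible at the level of $\kappa(q)$. You write that the $k\ge2$ contribution is $O(1/q)$, which is true for the sum itself, but after multiplication by $(q-1)/2$ it becomes a genuine $O(1)$ constant. The paper bounds it by $\tfrac{43}{26}-\tfrac{9}{13}\zeta(3)\approx0.82$ (Proposition~\ref{nonsplit}), and this number feeds directly into the final constants $-0.59$ and $1.41$.

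Second, and more importantly, the $\log_2 q$ main term and the terms $\ell(q)+\log\ell(q)$ do \emph{not} come from a zero-free region or from $\Gamma$-factors in the functional equation as you suggest. The paper splits the prime sum at $x_1=q^{\ell(q)}$ and $x_2=e^q$. Only the middle range $x_1<p\le x_2$ is handled by the explicit formula (via lemmas of Lu--Zhang), yielding the Siegel term plus an $O(1/\ell(q))$ error. The short range $p\le x_1$ is treated by the Brun--Titchmarsh theorem, and it is the resulting integral $\int_{kq}^{x_1}\frac{du}{u\log(u/q)}$ that produces $\log_2 q+\log\ell(q)$, while the crude bound $\log(x_1/q)/\log q$ gives one copy of $\ell(q)$ (the second copy, present only when $\delta_q=1$, comes from $\int_1^{x_1}u^{\beta_0-2}\,du\le\log x_1$). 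The constant $c_2(55)\approx-0.41$ arises from an explicit optimisation over the first few primes $p\le kq$ in the Brun--Titchmarsh argument with $k=55$; combined with the prime-power constant above, this yields $-0.59$ and $1.41$. Your proposal to push the explicit formula all the way down to small $p$ would not naturally produce the $\log_2 q$ shape, and the attribution of the numerical constants to archimedean $\Gamma$-factors is incorrect.
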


\begin{Rem}
The values $-0.59$ and $1.41$ above can be sharpened to $-0.61$, respectively  $1.39$,
by arguing as in Remark \ref{sharp-constant-2} below. Speculation \ref{spec2} 
gives what are believed to be the best possible asymptotical upper and lower bounds
for $\kappa(q)$.
\end{Rem}
The reader might be puzzled by the  appearance of $\beta_0(1-\beta_0)$, rather than
$1-\beta_0$ in Theorem \ref{gq-direct}, but should keep in mind that the contribution of $1/\beta_0$ is negligible.
However, it helps to simplify the analysis of the upper bound.

\subsection{Kummer's conjecture} 
Let $h_1(q)$ be the ratio of the class number $h(q)$ of 
$\mathbb Q(\zeta_q)$ and the class number 
of its maximal real subfield $\mathbb Q(\zeta_q+\zeta_q^{-1})$. 
Kummer proved that this is an integer. 
It is now called the \emph{relative class number}
(or sometimes \emph{first factor} of the class number $h(q)$). This quantity played an important role in 
Kummer's research on Fermat's Last Theorem. 
\begin{Defi}
Let $q$ be a prime number,
\begin{equation*}
%\label{Rq-def}
G(q)=2q\Bigl(\frac{q}{4\pi^2}\Bigr)^\frac{q-1}{4}, \quad
R(q)=
\frac{h_1(q)}{G(q)} \quad and \quad r(q)=\log R(q).
\end{equation*}
\end{Defi}
The ratio $R(q)$ is  called the 
 \emph{Kummer ratio}. If no confusion can arise we
 will use the same terminology for $r(q)$.
In 1851, Kummer \cite{Kummer1851} 
made the following conjecture.
\begin{conj}
%\label{Kconjecture}
As $q$ tends to infinity,
$R(q)$ tends to $1$.
\end{conj}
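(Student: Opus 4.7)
The plan is to translate Kummer's conjecture into a statement about an average of $\log|L(1,\chi)|$ over odd characters modulo $q$. Combining the analytic class number formulas for $\mathbb{Q}(\zeta_q)$ and $\mathbb{Q}^+(\zeta_q)$ with the functional equation for odd primitive characters, which yields $|B_{1,\chi}| = \sqrt{q}\,|L(1,\bar\chi)|/\pi$, a direct computation gives
\[
h_1(q) = 2q\Bigl(\frac{q}{4\pi^2}\Bigr)^{(q-1)/4} \prod_{\chi(-1)=-1} |L(1,\chi)| = G(q)\prod_{\chi(-1)=-1} |L(1,\chi)|,
\]
so that $R(q) = \prod_{\chi\text{ odd}} |L(1,\chi)|$. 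Kummer's conjecture is thereby equivalent to showing $r(q) = \sum_{\chi \text{ odd}} \log|L(1,\chi)| = o(1)$ as $q \to \infty$.

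Next I would expand each $\log L(1,\chi)$ via its Euler product, truncate at a parameter $y = y(q)$ to handle the conditional convergence of $\sum_p \chi(p)/p$, and apply the orthogonality identity $\sum_{\chi\text{ odd}} \chi(a) = \pm(q-1)/2$ according as $a \equiv \pm 1 \pmod q$, and is zero otherwise. Taking real parts reduces $r(q)$ to
\[
\frac{q-1}{2}\Bigl(\sum_{\substack{p,k\\ p^k \le y\\ p^k \equiv 1 \,(\mathrm{mod}\, q)}} \frac{1}{k p^k} - \sum_{\substack{p,k\\ p^k \le y\\ p^k \equiv -1\, (\mathrm{mod}\, q)}} \frac{1}{k p^k}\Bigr) + T_y(q),
\]
where $T_y(q)$ collects the contribution of primes $p > y$. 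The tail $T_y(q)$ can be bounded on average through mean-value theorems for Dirichlet $L$-functions, as in Granville's method, or assuming GRH. One then seeks $y$ so that both the truncated main term and $T_y(q)$ vanish in the limit.

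The principal obstacle is the truncated main term: a single prime power $p^k \le q$ with $p^k \equiv \pm 1 \pmod q$ produces a contribution of order $(q-1)/(2kp^k)$ to $r(q)$, so any systematic bias favouring one of the classes $\pm 1 \pmod q$ among small prime powers would destroy the required cancellation. Ruling out such Chebyshev-type biases uniformly in $q$ lies beyond present unconditional methods, and even GRH alone does not suffice without extra input on the multiplicative structure of the residues $\{p^k \bmod q : p^k \le q^{1+\eps}\}$. Accordingly, following Granville (1990) and Croot--Granville (2002), I would treat these residues probabilistically and aim at (i) average statements of the form $r(q) = o(1)$ for almost all primes $q$, with effective control of the exceptional set, and (ii) conditional results under GRH together with a non-coincidence hypothesis on small solutions of $p^k \equiv \pm 1 \pmod q$; these are in line with the results developed in the body of the paper. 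The unconditional conjecture itself appears genuinely out of reach with current technology.
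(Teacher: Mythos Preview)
The statement you are attempting to prove is Kummer's \emph{conjecture}, which the paper does not prove; no one has. Your reduction via Hasse's formula and orthogonality of odd characters to the prime-power sum
\[
r(q) = \frac{q-1}{2}\lim_{x\to\infty}\Bigl(\sum_{\substack{m\ge 1;\ p^m\le x\\ p^m\equiv 1\pmod{q}}} \frac{1}{mp^m} - \sum_{\substack{m\ge 1;\ p^m\le x\\ p^m\equiv -1\pmod{q}}} \frac{1}{mp^m}\Bigr)
\]
matches the paper's identity for $r(q)$ exactly, and your diagnosis that a single small prime $p\equiv\pm1\pmod q$ already contributes a term of bounded size to $r(q)$ is correct.

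The gap is conceptual and lies in the direction you push from there. You frame the obstruction as a ``Chebyshev-type bias'' to be \emph{ruled out} under extra hypotheses such as GRH plus a non-coincidence assumption. The paper, following Granville, takes the opposite stance: under the Elliott--Halberstam and Hardy--Littlewood conjectures such small primes \emph{do} occur for infinitely many $q$; whenever $2q+1$ is prime the single term $p=2q+1$ forces $r(q)\to\tfrac14$ along that sequence, so Kummer's conjecture is conditionally \emph{false}, not merely out of reach. Your fallback proposal of aiming at ``$r(q)=o(1)$ for almost all $q$'' is sound and coincides with what the paper and the cited Murty--Petridis result actually establish, but the correct framing is that one is bounding the exceptional set on which Kummer's conjecture fails, not accumulating evidence towards its truth.
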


\par The relative class number $h_1(q)$
is related to special values of Dirichlet $L$-series, namely, 
Hasse \cite{Hasse} showed that
\begin{equation}
\label{hasse}
R(q)=\prod_{\chi(-1)=-1}L(1,\chi),
\end{equation}
where the product is over all the odd characters modulo $q$.
It follows from this, \eqref{oud} and \eqref{maxreal} that
\[
R(q)=\lim_{s \to 1^+}\frac{\zeta_{\mathbb Q(\zeta_q)}(s)}
{\zeta_{\mathbb Q(\zeta_q)^+}(s)}.
\]
Indeed, it is not difficult to see
that
\begin{equation*}
%\label{Taylor}
\frac{\zeta_{\mathbb Q(\zeta_q)}(s)}
{\zeta_{\mathbb Q(\zeta_q)^+}(s)}=
R(q) \bigl( 1+(\gamma_q-\gamma_q^+)(s-1)+O_q((s-1)^2) \bigr).
\end{equation*}
For more details on Kummer's conjecture, see \cite{paper1-rq}. 
The following result can be regarded as an analogue of Theorem \ref{VK1}.
\begin{Thm}
\label{alerq}
We have 
\[
\sum_{Q/2 < q \le Q} \vert r(q) \vert \ll \pi^*(Q).
\]
\end{Thm}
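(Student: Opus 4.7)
The plan is to follow the strategy of Kumar Murty's proof \cite{KM} as adapted in the proof of Theorem~\ref{VK1}, modified to account for the fact that $r(q)$ is a sum of $\log L(1,\chi)$ values rather than $L'(1,\chi)/L(1,\chi)$ values.

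First I would apply Hasse's formula \eqref{hasse} to rewrite
\[
r(q)= \sum_{\chi(-1)=-1}\log L(1,\chi),
\]
and expand each $\log L(1,\chi)$ via the Euler product. Switching the order of summation, using the orthogonality
\[
\sum_{\chi(-1)=-1}\chi(n)=\tfrac{q-1}{2}\bigl(\mathbf{1}_{n\equiv 1\,(q)}-\mathbf{1}_{n\equiv -1\,(q)}\bigr)\quad\text{for } (n,q)=1,
\]
and parametrising the nonvanishing terms by $n=aq\pm 1$, this would yield
\[
r(q)=\frac{q-1}{2}\sum_{a\ge1}\Bigl[\frac{\Lambda(aq+1)}{(aq+1)\log(aq+1)}-\frac{\Lambda(aq-1)}{(aq-1)\log(aq-1)}\Bigr].
\]
The weight here is $\Lambda(n)/(n\log n)$, one logarithm smaller than the weight $\Lambda(n)/n$ in the analogous identity behind Theorem~\ref{VK1}; this extra logarithm is exactly why the desired bound has no $\log Q$ factor.

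Since na\"ive triangle-inequality bounds on the series in $a$ diverge, I would use Cauchy--Schwarz to reduce the problem to the second moment
\[
\sum_{Q/2<q\le Q}r(q)^{2}\ll\pi^{*}(Q).
\]
Expanding $r(q)^2$ and interchanging the order of summation produces a double series over pairs $(a,b)$. The diagonal $a=b$ contributes $O(\pi^{*}(Q))$, thanks to the summable series $\sum_{a} 1/(a\log(aQ))^{2}$, while the off-diagonal $a\ne b$ requires more delicate analysis.

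The main obstacle will be the off-diagonal estimation: for each pair $(a,b)$ with $a\ne b$ and each sign choice $(\varepsilon,\delta)\in\{\pm1\}^{2}$, one must control the number of primes $q\in(Q/2,Q]$ for which both $aq+\varepsilon$ and $bq+\delta$ are prime (or prime powers, but these contribute negligibly). This reduces to an upper-bound sieve (Brun--Titchmarsh for two linear forms evaluated at a prime variable), yielding a uniform bound $\ll\pi^{*}(Q)/\log^{2}Q$ times a mildly varying singular-series factor. Summed over $(a,b)$ using the decay of the weights, the off-diagonal total is again $O(\pi^{*}(Q))$, and the theorem follows.
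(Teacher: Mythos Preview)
Your approach is genuinely different from the paper's, and the second-moment idea via Cauchy--Schwarz is reasonable in spirit, but the off-diagonal step as you describe it has a gap.

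After you expand $r(q)^2$ and interchange the $q$-sum with the double sum over $(a,b)$, the off-diagonal contribution is (up to sign choices)
\[
\sum_{\substack{a,b\ge 1\\ a\ne b}}\frac{1}{ab}\,\#\{Q/2<q\le Q:\ q,\ aq+\varepsilon,\ bq+\delta\ \text{all prime}\}.
\]
The triple-sieve bound you quote gives the inner count $\ll \dfrac{Q}{(\log Q)^3}\cdot C(a,b)$ with a singular-series factor $C(a,b)$ of size $(\log\log ab)^{O(1)}$. Crucially, this bound does \emph{not} improve as $a$ or $b$ grows, so the outer sum $\sum_{a\ne b}C(a,b)/(ab)$ over \emph{all} $a,b\ge 1$ diverges. ``Decay of the weights'' alone is not enough: $\sum 1/(ab)$ is a divergent double harmonic sum. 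To make this work you must truncate at some $a,b\le T=Q^{O(1)}$ (for which the off-diagonal then gives $\ll \frac{Q}{(\log Q)^3}(\log T)^2\ll \pi^*(Q)$) and handle the tail $a>T$ separately. The tail corresponds to primes $p\equiv\pm1\pmod q$ with $p>Q^{O(1)}$, and controlling it requires an equidistribution input of Bombieri--Vinogradov type. You would also need to arrange things so that only a first-moment bound on the tail is needed (e.g.\ truncate \emph{before} applying Cauchy--Schwarz), since a second-moment bound on the tail is not readily available.

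The paper avoids all of this by working directly with the first moment. Starting from \eqref{loggie}, it discards the prime-power part and writes $|r(q)|\ll (q-1)\lim_{x\to\infty}|g_q(x)|+O(1)$. By partial summation this becomes an integral in $u$ of $|\theta(u;q,1)-\theta(u;q,-1)|$ against $du/u^2$ (times a harmless $\log$ factor), and one splits at $u=Q^3$. For $u>Q^3$ the Bombieri--Vinogradov theorem controls $\sum_{Q/2<q\le Q}|\theta(u;q,1)-\theta(u;q,-1)|$; for $u\le Q^3$ one simply recycles the bound on $S_1(Q)$ already obtained in the proof of Theorem~\ref{VK1} (which itself rests on the two-point sieve of Lemma~\ref{lem1}). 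No Cauchy--Schwarz, no three-term sieve, and no off-diagonal analysis is needed; the saving of $\log Q$ over Theorem~\ref{VK1} comes for free from the extra factor of $1/\log u$ in the weight.
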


This result says that $r(q)$ is absolutely bounded on average. 
For explicit estimates of the product $\prod_{\chi \ne \chi_0} L(1,\chi)$ and the Kummer ratio,
we refer the interested reader to  \cite{Full-K-ratio, paper1-rq}. 
Here, $\chi$ varies over \emph{all} the non-principal characters modulo $q$.

\subsection{Kummer's conjecture and the distribution of $\kappa(q)$} 
Kummer's conjecture was investigated by various authors \cite{CG,Gr,MP}. 
Some of the formulae appearing  in their work have a striking similarity with 
ones that occur in the study of Euler--Kronecker  constants. 
Exploiting this, Ford et al.\,\cite{FLM} could  translate 
the work of Granville \cite{Gr} to obtain distributional results on $\gamma_q/\log q$.
In 2018, Moree \cite{Msurvey} suggested that $\kappa(q)$,
defined in \eqref{gq-def},
should be even more analogous to the Kummer ratio. We will show that this is indeed the 
case by applying the results 
in \cite{CG, Gr} to $\kappa(q)$. 

In particular, one could wonder about the truth of the following conjecture.
\begin{conj}[Euler--Kronecker constant analogue of Kummer's conjecture]
\label{equivcon}
We have $\kappa(q)=o(1)$ as $q\to  \infty$.
\end{conj}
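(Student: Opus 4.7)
Conjecture~\ref{equivcon} is the Euler--Kronecker analogue of the still-open Kummer conjecture, so no unconditional proof is within reach; I can only outline a conditional strategy. Subtracting \eqref{gammaplus} from \eqref{gammaq} gives
\[
\gamma_q^+-\gamma_q \;=\; -\sum_{\chi(-1)=-1}\frac{L'(1,\chi)}{L(1,\chi)}.
\]
The plan is to convert this into a prime sum by expanding $-L'(s,\chi)/L(s,\chi)=\sum_n \Lambda(n)\chi(n)n^{-s}$ and invoking orthogonality of the odd characters modulo $q$: for $(n,q)=1$ the value of $\sum_{\chi(-1)=-1}\chi(n)$ is $\tfrac{q-1}{2}$, $-\tfrac{q-1}{2}$, or $0$ according as $n\equiv 1$, $n\equiv -1$, or $n\not\equiv\pm 1\pmod{q}$. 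Combined with an Abel-summation regularization of the resulting conditionally convergent series, this yields
\[
\gamma_q^+-\gamma_q \;=\; \frac{q-1}{2}\int_1^\infty\frac{\psi(x;q,1)-\psi(x;q,-1)}{x^2}\,dx,
\]
where $\psi(x;q,a)$ is the usual von Mangoldt--Chebyshev sum counting prime powers up to $x$ in the residue class $a \pmod{q}$.

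To derive $\kappa(q)=o(1)$, I would split the integral at $x=q$. The contribution from $x\le q$ is essentially trivial, since no prime power strictly below $q$ lies in the class $1\pmod{q}$ and (generically) none lies in the class $-1\pmod{q}$ either. For $x\ge q$, a strong Elliott--Halberstam-type hypothesis asserting
\[
\psi(x;q,\pm 1)=\frac{x}{q-1}+o\!\Bigl(\frac{x}{q\log^{1+\delta}\!x}\Bigr)
\]
uniformly in $x\ge q$ for some fixed $\delta>0$ forces cancellation between the two residue classes and gives $\gamma_q^+-\gamma_q=o(\log q)$, whence $\kappa(q)=o(1)$.

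Two obstacles dominate. First, a Siegel zero $\beta_0$ for an odd Dirichlet $L$-function modulo $q$ contributes, by Theorem~\ref{gq-direct}, a term of size $1/((1-\beta_0)\log q)$, which already precludes $\kappa(q)\to 0$ whenever $1-\beta_0 =o(1/\log q)$; such zeros must therefore be ruled out, as is the case under GRH for the odd family. Second, and more seriously, the range $q\le x\le q\log^A\!q$ of the Abel-summation integral lies beyond any known equidistribution result---even on average over moduli---yet one requires cancellation between primes $\equiv 1$ and primes $\equiv -1\pmod{q}$ already for $x$ only barely exceeding $q$, and with a $(\log x)^{-1-\delta}$ saving. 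Producing such an estimate is the decisive obstruction, and is in effect the same one that keeps Kummer's conjecture open, precisely reflecting the analogy that motivates Conjecture~\ref{equivcon}.
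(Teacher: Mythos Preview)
Your integral representation
\[
\gamma_q^+-\gamma_q \;=\; \frac{q-1}{2}\int_1^\infty\frac{\psi(x;q,1)-\psi(x;q,-1)}{x^2}\,dx
\]
is correct and matches Corollary~\ref{elegantdifference} after partial summation. However, there is a basic mismatch between your proposal and the paper: Conjecture~\ref{equivcon} is not proved in the paper, and in fact the paper marshals conditional evidence that it is \emph{false}. Theorem~\ref{gqrange} shows that, under the Elliott--Halberstam and Hardy--Littlewood conjectures, the set of limit points of $\{\kappa(q)\}_q$ is all of $[-\infty,\infty]$; Proposition~\ref{mainprop} pins down $\kappa(q)=\tfrac14+o(1)$ along the (conjecturally abundant) Sophie Germain primes; and Theorem~\ref{noEHproof} disproves Conjecture~\ref{equivcon} assuming only a Hardy--Littlewood-type lower bound. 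So there is no ``paper's own proof'' to compare against, and your outline is aiming at a target the paper regards as out of reach for the opposite reason.

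The specific gap in your conditional argument is the uniform hypothesis
\[
\psi(x;q,\pm 1)=\frac{x}{q-1}+o\!\Bigl(\frac{x}{q\log^{1+\delta}x}\Bigr)
\quad\text{for all }x\ge q.
\]
This is not merely unproved; it is incompatible with the Hardy--Littlewood conjecture. If $q$ is a Sophie Germain prime, then at $x=2q+1$ the single prime $2q+1$ already gives $\psi(2q+1;q,1)\ge\log(2q+1)\asymp\log q$, whereas your main term is $\approx 2$ and your allowed error is $o((\log q)^{-1-\delta})$. More generally, each prime of the form $mq\pm1$ with $m$ bounded produces a contribution of order $1/m$ to $\kappa(q)$ (this is exactly the mechanism behind Proposition~\ref{gen} and equation~\eqref{fund}), so the ``decisive obstruction'' you identify in the range $q\le x\le q\log^A q$ is not a gap in technique but the actual source of the conjecture's (conditional) falsity. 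Your final paragraph comes close to this realisation; the missing step is to recognise that the required cancellation genuinely does not occur on thin but infinite sets of primes $q$.
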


We recall that Granville \cite[Theorem~1]{Gr} showed that Kummer's conjecture is false under
the assumption of both the
Elliott--Halberstam conjecture and the Hardy--Littlewood conjecture for
Sophie Germain primes (see 
\S\ref{sec:prelim}). In Theorem 
\ref{gqrange} we will show that the same happens for Conjecture \ref{equivcon}; 
for example, we will see that $\kappa(q)$ approaches the value $1/4$ when
$q$ runs over the sequence of Sophie Germain primes (assuming, clearly, a suitable
conjecture about their ubiquity within the set of prime numbers).
In Theorem \ref{noEHproof}, we will show 
that Conjecture \ref{equivcon} is false 
assuming a stronger form of the Hardy--Littlewood conjecture \ref{HLconjecture}
(without having to rely on the Elliott--Halberstam conjecture), 
see Section \ref{sec:CG-analogues} for more details.

In fact, we will prove a series of results about $\kappa(q)$ 
showing that it has a striking similar behavior with $r(q)$. 
One of them is the following (for the proof
see Section \ref{sec:CG-analogues}).

\begin{Thm}\label{noEHproof}
Suppose there exists a set $M$ of $k$ distinct positive integers with 
$ \Sigma_{m \in M} 1/m  > 6$, such that there are 
$\gg x/(\log x)^{k+1}$ primes $q\leq x$ for which 
$mq \pm 1$ is also prime for every $m \in M$ (see Theorem \ref{unbounded}
and Conjecture \ref{HLconjecture} 
for the existence of such sets). Then 
there exists an $\epsilon>0$ such that $|\kappa(q)|>\epsilon$
for $\gg x/(\log x)^{k+1}$ primes $q\leq x$, as $x$ tends to infinity.
\end{Thm}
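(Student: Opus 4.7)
The plan is to adapt to the Euler--Kronecker setting the argument that Granville \cite{Gr} used to conditionally falsify Kummer's conjecture, employing the moment-based refinements of Croot and Granville \cite{CG}. Starting from \eqref{gammaq} and \eqref{gammaplus},
\[
\gamma_q^+ - \gamma_q = -\!\!\!\sum_{\substack{\chi \bmod q \\ \chi(-1)=-1}}\!\! \frac{L'(1,\chi)}{L(1,\chi)}.
\]
The first step is to truncate the Dirichlet series $-L'(s,\chi)/L(s,\chi) = \sum_n \Lambda(n)\chi(n)/n^s$ at a cut-off $T = q^A$ for a suitable fixed $A$, and to apply the orthogonality identity
\[
\sum_{\chi(-1)=-1}\chi(n) = \tfrac{q-1}{2}\bigl(\mathbf{1}_{n\equiv 1\,(q)} - \mathbf{1}_{n\equiv -1\,(q)}\bigr)
\]
to obtain
\[
\gamma_q^+ - \gamma_q = \tfrac{q-1}{2}\!\!\!\sum_{\substack{n \le T \\ n\equiv \pm 1\,(q)}}\!\!\frac{\Lambda(n)}{n}\,\sigma(n) + O(\log q),
\]
where $\sigma(n) = +1$ if $n \equiv 1\pmod q$ and $-1$ if $n \equiv -1\pmod q$, and the error absorbs the truncation remainder and the total contribution of prime powers $n = p^k$ with $k \ge 2$.

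The second step is to separate the main sum into a \emph{signal}, supported on the primes $p = mq \pm 1$ with $m \in M$ promised by the hypothesis, and \emph{noise}, supported on all other primes $p \equiv \pm 1 \pmod q$ with $p \le T$. Each signal prime contributes $\sigma(p)(q-1)\log p/(2p) \approx \sigma(p)\log q/(2m)$ to $\gamma_q^+ - \gamma_q$, so after dividing by $\log q$ the signal satisfies $|\mathrm{signal}(q)|/\log q \to \tfrac12\sum_{m\in M}1/m > 3$ as $q \to \infty$ along the hypothesis set (assuming a consistent sign pattern, as guaranteed by the hypothesis or extracted via pigeonhole from a mixed-sign version).

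The crucial third step is to bound the noise on average, establishing
\[
\sum_{Q/2 < q \le Q} |\mathrm{noise}(q)| \ll \pi^*(Q)\log Q,
\]
by decomposing the primes $p \le T$ with $p \equiv \pm 1 \pmod q$ outside the signal into dyadic ranges and applying Brun--Titchmarsh (supplemented by sharper moment inputs near the boundary $p \asymp q$), in the spirit of \cite{CG}. This implies that, outside an exceptional set of $o(\pi^*(Q))$ primes $q \in (Q/2, Q]$, one has $|\mathrm{noise}(q)|/\log q \le C$ for some constant $C < 3$. Intersecting the hypothesis set with this low-noise set then yields $\gg x/(\log x)^{k+1}$ primes $q \le x$ with
\[
|\kappa(q)| \ge \frac{|\mathrm{signal}(q)| - |\mathrm{noise}(q)|}{\log q} > 3 - C =: \epsilon > 0.
\]

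The main obstacle is obtaining a noise constant $C$ strictly less than $3$ without invoking Elliott--Halberstam; this demands sharp moment bounds for prime sums in arithmetic progressions, especially in the delicate range $p \asymp q$ where Brun--Titchmarsh loses a constant factor. The threshold $6$ in the hypothesis is calibrated precisely so that the signal, even after a potential halving from a pigeonhole selection of a single sign, retains a fixed positive gap over the typical noise.
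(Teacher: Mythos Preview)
Your decomposition into signal and noise is the right picture, but step 3 contains a genuine gap that breaks the argument.

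First, the inference from the first-moment bound is invalid. From $\sum_{Q/2<q\le Q}|\mathrm{noise}(q)|\ll \pi^*(Q)\log Q$ you only get, via Markov, that $|\mathrm{noise}(q)|/\log q\le C$ outside a set of $\ll \pi^*(Q)/C$ primes; to shrink this to $o(\pi^*(Q))$ you would need $C\to\infty$, not $C<3$. Second, and more seriously, even an exceptional set of size $o(\pi^*(Q))$ is far too large: the hypothesis set has only $\gg x/(\log x)^{k+1}$ primes, so you must show the high-noise set has $o(x/(\log x)^{k+1})$ primes, i.e.\ you need arbitrary power-of-log savings in the exceptional set, not merely density-zero. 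A plain average bound cannot deliver this.

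The paper's argument supplies exactly this quantitative control. It invokes equation~\eqref{funda}, which in turn rests on Proposition~\ref{imp} and Corollary~\ref{phoneplus}. These are built from (i) Bombieri--Vinogradov for $p>q^{2+\delta}$, (ii) Hooley's ``almost all'' refinement of Brun--Titchmarsh (\cite[Corollary~3.2]{CG}) for the critical range $q^{1+\delta}<p\le q^{2+\delta/2}$, which is where the specific constant $3$ comes from, and (iii) Croot--Granville's Proposition~5.1 (our Proposition~\ref{very}) for $q/\delta<p\le q^{1+\delta}$. Each of these holds outside an exceptional set of $O(x/(\log x)^{B})$ primes with $B$ as large as one likes (taking $n=k+1$ in \eqref{funda} gives $B=k+3/2$), which is what makes the intersection with the hypothesis set nontrivial.

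Finally, your explanation of the threshold $6$ is off: there is no pigeonhole halving. The hypothesis already fixes a single sign, the factor $1/2$ is the one in $\kappa(q)=\frac{q-1}{2}(v_q+w_q)$, and $6$ arises because the noise bound is $3+O(\delta)$ (see Remark~\ref{Remark-prop-53}), so one needs $\frac12\sum_{m\in M}1/m>3$.
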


This result is an analogue of \cite[Theorem 1.9]{CG} 
(where the authors stated the analogous result, but for $mq+1$ only) concerning $r(q)$, but our version 
requires a stronger assumption, namely that 
$ \sum_{m\in M} 1/m  > 6$, while
Croot and Granville just needed $ \sum_{m\in M} 1/m > 4$.
This difference
is due to technical reasons depending of the different
set of weights used in the prime sums involved into the 
definitions of $\kappa(q)$ and $r(q)$,
see Remark \ref{Remark-prop-53} for more insights.

Ihara \cite{I} conjectured that always $\gamma_q>0$. 
Ford et al.~\cite{FLM} describe a set of 
admissible integers $M$ of cardinality $2089$ such
that $ \sum_{m\in M} 1/m  > 2$ and apply it to show that
under Conjecture \ref{HLconjecture} below, there are $\gg x/(\log x)^{2090}$ primes $q\le x$
for which $\gamma_q<0$. It seems likely that the set of primes $q$ with
$\gamma_q<0$ is very thin. Meanwhile several explicit 
such $q$ are known.

Recalling that $q$ is odd and hence $m$ must be even, and remarking
that $\sum_{n \le 12367} 1/(2n) = 6.0000215 \dotsc > 6$,
we can infer that 
$\#M \ge 12367$. For comparison,
we observe that $\sum_{n \le 227} 1/(2n) = 4.0021833 \dotsc > 4$.

The next result, together with \cite[Theorem~1.3]{CG}, also shows that $\kappa(q)$ and $r(q)$ behave in a similar way.

\begin{Thm}
\label{cr1}
Assume that the Elliott--Halberstam conjecture \ref{EHconjecture} is true.
\begin{enumerate}[label={(\roman*)}, wide, nosep, after=\vspace{4pt}]
\item Let $\varpi$ be a rational number. Then, under
the asymptotic version of the
Hardy--Littlewood conjecture \footnote{Inspecting the proof
of \cite[Theorem~1.3]{CG}, it is clear that
the Hardy--Littlewood conjecture \ref{HLconjecture-asymp} is only required for the case
in which $\varpi$ is rational.}, 
\ref{HLconjecture-asymp}, there exists an integer
$B(\varpi) \geq 1$ and a constant $C_\varpi > 0$, such that
there is an infinite set $Q_\varpi$ containing 
$\sim C_\varpi x/ (\log x)^{B(\varpi)}$ primes $q\leq x$
for which 
\begin{equation}
\label{Qomega}
\lim_{\substack{q \to \infty \\
q \in Q_{\varpi}}} \kappa(q) = \varpi.
\end{equation}
\item 
Let $A > 0$ be arbitrary. Let $\varpi$ be an irrational 
number. Then, any set of prime numbers $Q_{\varpi}$ 
for which \eqref{Qomega} holds, contains $\ll_{\varpi} \frac{x}{(\log x)^A}$
primes $q\le x$.
\end{enumerate}
\end{Thm}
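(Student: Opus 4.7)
The plan is to transport the Croot--Granville treatment of the Kummer ratio $r(q)$ from \cite[Theorem~1.3]{CG} to the quantity $\kappa(q)$.  The starting point is the character expansion obtained by subtracting \eqref{gammaq} from \eqref{gammaplus} and applying orthogonality among the odd characters modulo $q$:
\[
\gamma_q^+-\gamma_q \;=\; -\sum_{\chi(-1)=-1}\frac{L'(1,\chi)}{L(1,\chi)} \;=\; \frac{q-1}{2}\,\lim_{x\to\infty}\sum_{n\le x}\frac{\Lambda(n)}{n}\bigl(\mathbf{1}_{n\equiv 1}-\mathbf{1}_{n\equiv -1}\bigr),
\]
with congruences understood modulo $q$.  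The analogous representation of $r(q)$ carries the extra weight $1/\log n$, and this is exactly why the division by $\log q$ in the definition of $\kappa(q)$ makes it the correct analogue of $r(q)$: a prime $p=mq+\varepsilon$ with $m$ bounded and $\varepsilon\in\{\pm 1\}$ contributes $\varepsilon/(2m)+o(1)$ to $\kappa(q)$, exactly as to $r(q)$.

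First I would split the prime sum at $p\le q(\log q)^C$, $q(\log q)^C<p\le q^A$, and $p>q^A$, with $C$ large, disposing of prime powers with $k\ge 2$ by a trivial absolute bound.  The short range contributes only when $p=mq\pm 1$ with $m\ll (\log q)^C$, and each such $p$ delivers a fixed term $\pm 1/(2m)+o(1)$.  The two remaining ranges are where Conjecture~\ref{EHconjecture} enters, applied in the standard form
\[
\sum_{q\le Q}\max_{(a,q)=1}\Bigl|\psi(y;q,a)-\frac{y}{\phi(q)}\Bigr|\;\ll\; \frac{y}{(\log y)^B}\qquad (y\le q^A),
\]
coupled with a dyadic decomposition and partial summation; this shows that the middle- and tail-range contributions to $\kappa(q)$ are $o(1)$ for all but $o(\pi^*(Q))$ primes $q\in(Q/2,Q]$.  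This reproduces the mechanism of \cite{CG} with the weight $\Lambda(n)/n$ in place of $\Lambda(n)/(n\log n)$.

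For part~(i), given a rational $\varpi$, I would build a finite pattern $\mathcal{P}\subset\mathbb{N}\times\{\pm 1\}$ with $\sum_{(m,\varepsilon)\in\mathcal{P}}\varepsilon/(2m)=\varpi$; this is possible because finite signed sums of $\pm 1/(2m)$ exhaust $\mathbb{Q}$.  Define $Q_\varpi$ as those primes $q$ for which, inside a suitable window, $mq+\varepsilon$ is prime exactly for $(m,\varepsilon)\in\mathcal{P}$ and composite on a prescribed forbidden complement.  Conjecture~\ref{HLconjecture-asymp} supplies the asymptotic $|Q_\varpi\cap[1,x]|\sim C_\varpi x/(\log x)^{B(\varpi)}$, with $B(\varpi)$ equal to the total number of primality constraints encoded in the pattern, and the analysis of the previous paragraph forces $\kappa(q)\to\varpi$ along $Q_\varpi$.

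Part~(ii) is a pigeonhole argument on primality patterns.  Fix $A>0$, choose $M(q)$ growing slowly enough that $4^{M(q)}=(\log q)^{o(1)}$, and suppose for contradiction that $|Q_\varpi\cap[1,x]|\ge x/(\log x)^A$.  Assign to each $q\in Q_\varpi$ the template $\mathcal{P}(q)=\{(m,\varepsilon)\in[1,M(q)]\times\{\pm 1\}:\ mq+\varepsilon\text{ is prime}\}$; by pigeonhole some fixed template $\mathcal{P}$ is shared by $\gg x/(\log x)^{A+o(1)}$ primes $q$, and along that subsequence the second paragraph pins $\kappa(q)$ to the rational value $\sum_{(m,\varepsilon)\in\mathcal{P}}\varepsilon/(2m)$, contradicting convergence to the irrational $\varpi$.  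The main obstacle is the rigorous execution of the tail estimate: the weight $\Lambda(n)/n$ sits at the edge of absolute convergence, so producing an $o(1)$ error uniformly in $q\in(Q/2,Q]$ under Conjecture~\ref{EHconjecture} requires the delicate dyadic-plus-Abel machinery already deployed in \cite{CG}; once it is in place, the combinatorial Steps carried out in the last two paragraphs follow the blueprint of \cite{CG}, with only the cosmetic change that the sign $\varepsilon\in\{\pm 1\}$ at most doubles the number of admissible templates.
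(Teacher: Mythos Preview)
Your outline for part~(i) and the tail reduction are essentially the paper's approach (which simply refers to \cite[Theorem~1.3(i)]{CG} and packages the analytic input as Propositions~\ref{goneplus} and~\ref{imp}, culminating in the approximation \eqref{fund}). One technical point: the range $q(\log q)^C<p\le q^{1+\delta}$ is not covered by Elliott--Halberstam alone and needs Hooley's ``almost all'' Brun--Titchmarsh bound (\cite[Corollary~3.2]{CG}), but you are right that this machinery is already in \cite{CG}.

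Your argument for part~(ii), however, has a genuine gap. Pigeonholing on templates $\{(m,\varepsilon):m\le M(q),\ mq+\varepsilon\text{ prime}\}$ with $M(q)$ slowly growing does \emph{not} pin $\kappa(q)$ to the template sum: primes $mq\pm 1$ with $M(q)<m\le(\log q)^C$ still lie in your short range and each contributes $\pm 1/(2m)$, so two primes $q$ sharing the same template on $[1,M(q)]$ need not have nearby values of $\kappa(q)$. The paper handles (ii) without any pigeonhole, using two ingredients you omit. First, Proposition~\ref{very} (\cite[Proposition~5.1(i)]{CG}) shows unconditionally that for all but $O(x/(\log x)^{n+1/2})$ primes $q\in(x,2x]$ there are fewer than $n$ primes below $q/\delta$ congruent to $\pm 1\pmod{q}$; combined with Elliott--Halberstam this upgrades \eqref{fund} to the statement that $\kappa(q)=\sum_{m\in M}1/(2m)+O(\delta)$ with the crucial bound $|M|<n$. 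Second, Lemma~\ref{CGlem}(ii) provides the Diophantine gap: for irrational $\varpi$ and fixed $n$ there is $\nu=\nu(2\varpi,n)>0$ such that no sum $\sum_{m\in M}1/m$ with $M\subset\mathbb{Z}^*$, $|M|<n$, lies within $\nu$ of $2\varpi$. Taking $n>A-\tfrac12$ and then $\delta$ small forces $|\kappa(q)-\varpi|\gg_{\varpi,n}1$ for all but $O(x/(\log x)^{A})$ primes, which is the contrapositive of~(ii). The point is that each $\kappa(q)$ is close to \emph{some} rational of bounded complexity, and the irrational $\varpi$ is uniformly far from all of them; no common template across different $q$ is required.
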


Also the proof of Theorem \ref{cr1} will be presented in Section \ref{sec:CG-analogues}.

The last result we present in this Introduction (and is proved in
Section \ref{sec:CG-analogues} too), shows that the
distribution of $\kappa(q)$, just as that of $r(q)$, is closely 
connected with the existence of
infinitely many $q$ for which also $mq+b$ is prime (with $b\in \{- 1, 1\}$ fixed and $m$ an integer).

\begin{Thm}
\label{cr5}
Let $b \in \{-1, 1\}$.
If there exists $A\ge 1$ and $\epsilon >0$ such that
$b \kappa(q) > 3 +\epsilon$ for $\gg x/ (\log x)^A $ primes $ q \leq x$ for all
sufficiently large $x$,
then there exists an integer $m$ for which there are
$\gg x/ (\log x)^A $ primes $q \leq x$ such that  
$mq +b$ is also prime.
\end{Thm}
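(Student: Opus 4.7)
The plan is to transfer the hypothesis $b\kappa(q) > 3+\epsilon$ to a lower bound on a prime sum of the form $\sum_{m:\,mq+b\text{ prime}} 1/m$ and then extract a distinguished $m$ via a swapping-and-pigeonhole argument.

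First, starting from (\ref{gammaq}) and (\ref{gammaplus}), we have $\gamma_q - \gamma_q^+ = \sum_{\chi(-1)=-1} L'(1,\chi)/L(1,\chi)$. Using the logarithmic-derivative identity $-L'(s,\chi)/L(s,\chi) = \sum_p \log p\cdot\chi(p)/(p^s-\chi(p))$ at $s=1$ together with the orthogonality relation $\sum_{\chi(-1)=-1}\chi(a) = \pm (q-1)/2$ according as $a\equiv \pm 1\pmod q$ (and zero otherwise), one writes
\[
\gamma_q^+ - \gamma_q = \frac{q-1}{2}\Bigl(\sum_{p\equiv 1\cmod{q}}\frac{\log p}{p-1} - \sum_{p\equiv -1\cmod{q}}\frac{\log p}{p+1}\Bigr) + O(1),
\]
where the two sums are interpreted as $X\to\infty$ limits of the partial sums over $p\le X$ (individually divergent, but with a conditionally convergent difference). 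The $O(1)$ absorbs the contribution of primes whose order in $(\mathbb{Z}/q)^*$ is at least $3$, controlled by $\sum_p \log p/p^3<\infty$. Writing $p=mq+b$ for $b\in\{\pm 1\}$, expanding $\log(mq+b)/(mq) = (\log q+\log m)/(mq)+O(1/(m^2q))$, and dividing by $\log q$, for $q$ large we obtain
\[
2\kappa(q) = \sum_{m\geq 1}\frac{1}{m}\Bigl(1+\frac{\log m}{\log q}\Bigr)\bigl(\mathbf{1}_{mq+1\text{ prime}} - \mathbf{1}_{mq-1\text{ prime}}\bigr) + o(1),
\]
the right-hand side being a conditionally convergent limit.

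For the pigeonhole step, set $Q_x := \{q\le x \text{ prime}: b\kappa(q)>3+\epsilon\}$ (so $|Q_x|\gg x/(\log x)^A$) and $T_c(m):=\#\{q\in Q_x: mq+c \text{ prime}\}$. Performing a dyadic decomposition in $q$ so that the weight $1+(\log m)/\log q$ is essentially constant on each subrange, summing the inequality $b\cdot 2\kappa(q)>6+2\epsilon$ over $q\in Q_x$ and interchanging the order of summation yields (after absorbing lower-order terms)
\[
\sum_{m\geq 1}\frac{T_b(m)-T_{-b}(m)}{m} > (6+\epsilon)\,|Q_x|.
\]
Truncating at an absolute constant $M=M(\epsilon)$, chosen so that the tail $|\sum_{m>M}(T_b(m)-T_{-b}(m))/m|$ is at most $\epsilon\,|Q_x|/2$, one deduces
\[
\sum_{m\le M} \frac{T_b(m)}{m} \geq \sum_{m\le M}\frac{T_b(m)-T_{-b}(m)}{m} > 5\,|Q_x|.
\]
Since $\sum_{m\le M}1/m\le \log M+O(1)$, pigeonhole furnishes an $m\le M$ with $T_b(m)\gg |Q_x|/\log M \gg x/(\log x)^A$, which is the desired conclusion (the case $b=-1$ being symmetric to $b=1$).

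The main technical obstacle is the uniform truncation: producing an absolute $M=M(\epsilon)$ for which the tail $|\sum_{m>M}(T_b(m)-T_{-b}(m))/m|$ is bounded by $\epsilon\,|Q_x|/2$ for all large $x$. This is delicate because each of $\sum_m(1/m)T_{\pm b}(m)$ diverges individually (by Mertens' theorem in arithmetic progressions), so one must rely on cancellation between the two residue classes $\pm b\pmod q$. A Bombieri--Vinogradov-type bound, giving effective equidistribution of primes in arithmetic progressions on average for moduli up to $x^{1/2-o(1)}$, combined with partial summation, should supply the required uniform control of the tail.
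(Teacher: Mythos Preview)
Your proposal has the right skeleton (reduce $\kappa(q)$ to a signed sum over $m$ with $mq\pm1$ prime, then pigeonhole on finitely many $m$), but the step you flag as ``the main technical obstacle'' is a genuine gap, and your proposed fix via Bombieri--Vinogradov does not close it. The problem is twofold. First, BV only controls the far tail: it applies to $p>q^{2+\delta}$, i.e.\ $m>q^{1+\delta}$, since one needs $q<p^{1/2-o(1)}$. The critical intermediate range $M<m\le q^{1+\delta}$ lies outside BV's reach, and this is exactly where the constant $3$ in the hypothesis $b\kappa(q)>3+\epsilon$ originates. Second, even if you had a tool for that range, your swapping approach asks for cancellation in $\sum_{q\in Q_x}(\text{tail}_q)$ over the \emph{specific} subset $Q_x=\{q:b\kappa(q)>3+\epsilon\}$; averaging results like BV give control over \emph{all} moduli, not over an adversarially chosen subset, so you cannot expect the tail to be $\le(\epsilon/2)|Q_x|$.

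The paper bypasses the swapping entirely and works pointwise: it proves (Proposition~\ref{imp}, via Corollary~\ref{phoneplus} and Proposition~\ref{goneplus}) that for \emph{each} $q$ outside an exceptional set of size $O(x/(\log x)^{n+1/2})$ one has $\bigl|\kappa(q)-\tfrac{q-1}{2}w_q(q/\delta)\bigr|\le 3+O(\delta)$. The $3$ comes from bounding $\tfrac{q-1}{2}|w_q(q^{2+\delta/2})-w_q(q^{1+\delta})|$ in the range where BV fails, using Hooley's ``almost all'' Brun--Titchmarsh bound (this is the $6+o(1)$ in \eqref{secest}, halved). Taking $n>A$ makes the exceptional set negligible compared to $Q_x$; then for each remaining $q\in Q_x$ one gets $(q-1)w_q(q/\delta)\ge 2\kappa(q)-6-O(\delta)>\epsilon$, so some $m<2/\delta$ has $mq+1$ prime, and pigeonhole on the finitely many $m$ finishes. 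Your truncation claim with error $(\epsilon/2)|Q_x|$ would amount to replacing the $3$ by $o(1)$, which is only available under Elliott--Halberstam (the conditional half of Proposition~\ref{imp}); unconditionally the $3$ is unavoidable with current technology, and that is precisely why the theorem requires $b\kappa(q)>3+\epsilon$ rather than merely $>\epsilon$.
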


The inequality $b \kappa(q) > 3 +\epsilon$ in this result
is the analogue of $b r(q) > 2 +\epsilon$ in \cite[Theorem~1.8]{CG},
and the reason for the larger constant in the former case,
is the same as the one mentioned after Theorem \ref{noEHproof};
in this case too we refer to  Remark \ref{Remark-prop-53} for more details.

\subsection{Outline of our work.} 
The paper is organized as follows. In Section \ref{sec:prelim}
we will define the quantities and state the main theorems we will use
in this work. Moreover, we will also state the  conjectures about the distribution
of prime numbers we will need to prove some of our results.
Section \ref{sec:meangamma_plus} is dedicated to prove Theorem~\ref{VK1},
while in Section \ref{sec:Kummer} we will discuss about Kummer's conjecture
and present the proof of Theorem \ref{alerq}.

In Section \ref{sec:kappaq-initiol} we will start our discussion about $\kappa(q)$
and we will prove Theorem \ref{gq-direct}.
In Section \ref{sec:CG-analogues} we will present several
other results about $\kappa(q)$ and its similarity with $r(q)$.
In Section \ref{sec:spec}, we will show that a suitable heuristic
about the distribution of prime numbers in arithmetic progressions shows
that $\kappa(q) \sim r(q)$ for $q$ tending to infinity.
Moreover, we will prove that, under the assumption of a strong
version of the Elliott--Halberstam conjecture \ref{EH2}, that
$ \sum_{Q/2 < q \leq Q} \vert \kappa(q)\vert =o(\pi^*(Q))$, as $Q$ tends to infinity.

In Table \ref{table1} on p.\pageref{table1} we summarise the main
theorems and the conjectures used in the proofs of our results; in Table \ref{table2}
on p.\pageref{table2} we present the values of $\kappa(q)$ for every odd prime up to $1000$.

Section \ref{sec:numerical} 
presents  some relevant numerical data and graphical representations. 
They are consistent with the speculations
and conjectures about our results we make.
For example, the comparison of the histograms of $r(q)$ and $\kappa(q)$
provides, in our opinion, pretty compelling 
evidence that these quantities
should have the same asymptotic behaviour. Also, the presence of 
secondary ``spikes'' around $1/4$ and $1/8$ 
demonstrates in a beautiful way the contributions of the primes $q$ for which 
$2q\pm1$, or $4q\pm1$, are prime too.

\section{Preliminaries}
\label{sec:prelim}
\subsection{Prime number distribution}
In this section, we recall the material we need on the distribution of prime numbers, using the notations

\begin{equation*}
\pi(t) = \sum_{p \le t} 1, \quad \quad
\pi(t;d,a) = \sum_{\substack{p \le t \\ p \equiv a  \pmod*{d}}} 1,
\end{equation*}
\begin{equation*}
%\label{theta_q}
\theta(t) = \sum_{p \le t} \log p, \quad \quad
\theta(t;d,a) = \sum_{\substack{p \le t \\ p \equiv a  \pmod*{d}}} \log p,
\end{equation*}
and
\begin{equation*}
%\label{psi_q}
\psi(t) = \sum_{n \le t} \Lambda(n), \quad \quad
\psi(t;d,a) = \sum_{\substack{n \le t \\ n \equiv a  \pmod*{d}}} \Lambda(n),   
\end{equation*}
where $\Lambda$ denotes the von Mangoldt function.

For fixed coprime integers $a$ and $d$, we have asymptotic equidistribution: 
$$
\psi(t;d,a)\sim \psi(t)/\varphi(d)
\quad
\textrm{and}
\quad
\theta(t;d,a)\sim \theta(t)/\varphi(d),
\quad
$$
as $t\to \infty$.
The following result concerns the accuracy of the first approximation. 
For every  $A > 0$, we have
that
\begin{equation} 
\label{BV-type-ineq}
\sum_{d \le {\mathcal Q}}
\max_{t \le u} 
\max_{(a,d)=1}
\Bigl\vert 
\psi(t;d,a)-\frac{\psi(t)}{\varphi(d)}
\Bigr \vert 
\ll 
\frac{u}{(\log u)^{A}},
\end{equation}
where ${\mathcal Q}={\mathcal Q}(u)$ is a suitable function,
and the implicit constant may depend on $A$ and ${\mathcal Q}$.
Estimate \eqref{BV-type-ineq},  with 
${\mathcal Q}(u) = \sqrt{u}/(\log u)^B$, $B = B(A)>0$,
was independently proved by Bombieri 
and A.I. Vinogradov in 1965, see \cite[\S9.2] {CojocaruM2006}.
The very same statement for 
$t=u$ and ${\mathcal Q}(u) = u^{1-\epsilon}$ with $0<\epsilon<1$,
is unproved yet and it is commonly called
the \emph{Elliott--Halberstam conjecture}, see \cite{CojocaruM2006,ElliottH1968/69}, which we state below.
A statement equivalent to \eqref{BV-type-ineq} with the $\psi(t;d,a), \psi(t)$-functions 
replaced by the $\pi(t;d,a), \pi(t)$-functions, or the $\theta(t;d,a), \theta(t)$-ones, 
can be easily obtained. 

\begin{conj}
[Elliott--Halberstam conjecture, general case]
%\label{EHconjecture-gen}
For every $\epsilon>0$ and $A>0$,
\[
\sum_{d\le u^{1-\epsilon}} \max_{(a,d)=1}\Bigl \vert \pi (u;d,a)-{\frac {\pi (u)}{\varphi (d)}}\Bigr \vert  \ll_{A,\epsilon} \frac{u}{(\log u)^A}.
\]
\end{conj}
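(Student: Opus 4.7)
The final displayed statement is the Elliott--Halberstam conjecture, one of the most notorious open problems in analytic number theory. Any proof ``plan'' below is therefore necessarily speculative; I will describe the natural line of attack and identify precisely where it fails.

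The natural starting point is the Bombieri--Vinogradov theorem, inequality \eqref{BV-type-ineq}, which already gives the desired bound for moduli up to ${\mathcal Q}(u) = u^{1/2}/(\log u)^{B}$; the task is thus to push the level of distribution from $1/2$ to any $\vartheta < 1$. My plan would be: (i) reduce the $\pi$-statement to the analogous one for $\psi(u;d,a)$ by partial summation, and bound the contribution of prime powers trivially; (ii) apply a Heath--Brown or Vaughan combinatorial identity to write $\Lambda$ as a sum of ``type I'' terms (long smooth coefficients) and ``type II'' bilinear terms; and (iii) bound the contribution of each piece to the left-hand side of \eqref{BV-type-ineq} on average over $d \le u^{1-\epsilon}$, using the large sieve for the type I pieces and Linnik's dispersion method for the type II pieces. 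The type II analysis, after Cauchy--Schwarz and opening the square, reduces to extracting cancellation in shifted convolution sums and incomplete Kloosterman sums averaged over the moduli.

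The main obstacle is precisely this last step: the unaided large-sieve inequality breaks at the square-root barrier $\vartheta = 1/2$, and going further requires \emph{on average} savings in Kloosterman-type sums beyond what Deligne's individual bounds supply. Partial progress (Bombieri--Friedlander--Iwaniec, Fouvry--Iwaniec, and the Zhang/Polymath framework that powered the bounded gaps results) does push past $\vartheta = 1/2$, but only when one restricts either to smooth moduli, to a fixed residue class $a$, or to sequences $n$ supported in an arithmetic progression; none of these variants handles the full sum with arbitrary $(a,d)=1$. Obtaining such cancellation for general moduli appears to require new analytic input beyond what the Kuznetsov trace formula on $\mathrm{GL}_2$ has so far delivered, and breaking this barrier is essentially equivalent to the conjecture itself. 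I would therefore flag step (iii) as the single irreducible difficulty, and be explicit that the statement as formulated currently lies beyond the reach of available technology.
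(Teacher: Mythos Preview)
Your assessment is correct in substance: the displayed statement is the Elliott--Halberstam conjecture, which the paper records as a \emph{conjecture}, not a theorem. There is no proof in the paper to compare against; the authors simply state it as an unproved hypothesis and then use a weaker variant (Conjecture~\ref{EHconjecture}) as a conditional assumption in later results. Your recognition that this is open, and your identification of the square-root barrier in step~(iii) as the essential obstruction, are both accurate and well-explained. Nothing further is required here.
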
 
In this paper we will actually need a slightly different form involving
just the cases where
$a\in\{-1,1\}$ and $d=q$ is a prime:
\begin{conj}
[Elliott--Halberstam conjecture]
\label{EHconjecture}
For every $\epsilon>0$ and $A>0$,
\[
\sum_{q\le u^{1-\epsilon}} \Bigl \vert \pi (u;q,1)- \pi(u;q,-1)\Bigr \vert  \ll_{A,\epsilon} \frac{u}{(\log u)^A}.
\]
\end{conj}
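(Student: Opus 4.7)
The displayed item at the end of the excerpt is a conjecture rather than a theorem, so no unconditional argument can be given; it is genuinely open in current analytic number theory. What a sensible ``proof plan'' for this statement can do is record the routine reduction showing that this restricted form is a direct consequence of the general Elliott--Halberstam conjecture stated immediately above it, which is how the paper presumably justifies calling it an Elliott--Halberstam conjecture at all.

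The plan is as follows. Start from the general conjecture and restrict the outer sum to prime moduli $d = q$, which only makes the left-hand side smaller. For any prime $q$ the residue classes $a = 1$ and $a = -1$ are coprime to $q$ (assuming $q \neq 2$, which holds throughout by convention in this paper), so the general bound applies to each separately. I would then write
\[
\pi(u;q,1) - \pi(u;q,-1)
= \Bigl(\pi(u;q,1) - \tfrac{\pi(u)}{q-1}\Bigr)
- \Bigl(\pi(u;q,-1) - \tfrac{\pi(u)}{q-1}\Bigr),
\]
so that the main term $\pi(u)/(q-1)$ cancels. A triangle inequality inside the outer sum followed by two applications of the general Elliott--Halberstam conjecture (once for $a=1$, once for $a=-1$) then gives
\[
\sum_{q \le u^{1-\epsilon}} \bigl| \pi(u;q,1) - \pi(u;q,-1) \bigr|
\ll_{A,\epsilon} \frac{u}{(\log u)^A},
\]
which is the claim. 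This is the only step that would appear in the write-up.

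The hard part, needless to say, is not this deduction but the general conjecture itself. Unconditionally, the best result in this direction remains the Bombieri--Vinogradov theorem, with level of distribution $\mathcal{Q}(u) = \sqrt{u}/(\log u)^B$, which is too weak for the applications later in the paper (e.g.\ Theorem~\ref{cr1} and the results in Section~\ref{sec:CG-analogues}). One might hope that the restricted form stated here, which asks only for a difference of two counts rather than individual deviations from the main term, could benefit from cancellation between $a=1$ and $a=-1$ and be provable under weaker hypotheses. I am not aware of any technique that exploits this; the standard dispersion/Vaughan--Heath-Brown identity methods treat the two residues independently, so any honest progress seems to require advances on EH itself. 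Consequently the plan reduces to the observation above, with the genuine obstruction quarantined inside the invocation of the general conjecture.
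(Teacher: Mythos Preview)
Your assessment is correct: this is a conjecture, not a theorem, and the paper gives no proof of it. The paper simply introduces it with the sentence ``In this paper we will actually need a slightly different form involving just the cases where $a\in\{-1,1\}$ and $d=q$ is a prime,'' and then states it; your triangle-inequality reduction from the general Elliott--Halberstam conjecture is the obvious (and correct) justification that the paper leaves implicit.
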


In Section \ref{sec:spec}, we will prove that that the absolute value of 
$\kappa(q)$ tends to 
zero on average assuming, as in  \cite[\S10]{Gr},
that the previous conjecture holds in a wider range for $q$, namely

\begin{conj}[Strong form of the Elliott--Halberstam conjecture]
\label{EH2}
We have
$$
\sum_{u < q \leq 2u} \bigl\vert \pi(t;q,1) - \pi(t;q,-1) \bigr\vert \ll \frac{t}{(\log t)^3}
$$
uniformly for any 
$t \geq u \exp\bigl(\frac{\log u}{\sqrt{\log_2 u}}\bigr).$
\end{conj}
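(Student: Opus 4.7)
The plan would necessarily be aspirational: this inequality is a strong form of the Elliott–Halberstam conjecture and lies beyond current technology. What is unconditionally available is the Bombieri–Vinogradov inequality \eqref{BV-type-ineq}, which permits the modulus to range only up to $\sqrt{u}/(\log u)^B$, whereas here one sums over primes $q$ in the dyadic window $(u,2u]$ while $t$ is only marginally larger than $u$ (since $t\ge u\exp(\log u/\sqrt{\log_2 u})$ forces $\log t\sim \log u$). So I will describe the structural reductions one would attempt, and indicate where the obstruction lies.

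First, I would convert the counting‑function difference into a sum of odd character sums. Since $\chi(1)-\chi(-1)$ equals $0$ for even characters and $2$ for odd characters, one has, for prime $q$,
\[
\pi(t;q,1)-\pi(t;q,-1)=\frac{2}{\varphi(q)}\sum_{\substack{\chi \bmod q\\ \chi(-1)=-1}}\pi(t,\chi),
\]
so that, after passing from $\pi(t,\chi)$ to $\psi(t,\chi)$ with a negligible error from prime powers, the task reduces to proving
\[
\sum_{u<q\le 2u}\frac{1}{q}\,\Bigl|\sum_{\substack{\chi \bmod q\\ \chi(-1)=-1}}\psi(t,\chi)\Bigr|\ll \frac{t}{(\log t)^{3}}.
\]
The restriction to \emph{odd} characters is a genuine simplification, because the principal character (which would produce the main term) is even and disappears, and because the large sieve over odd characters of prime conductor in $(u,2u]$ enjoys the same inequality as the full family up to an absolute constant.

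Second, I would carry out a Heath–Brown or Vaughan combinatorial identity, decomposing $\Lambda$ into Type~I sums (one smooth variable) and Type~II bilinear sums over dyadic ranges, and estimate each piece by the multiplicative large sieve restricted to odd characters of prime modulus in $(u,2u]$. Type~I ranges with smooth variable of length up to roughly $t^{1/2}$ are tractable; the saving of one further logarithm, from $(\log t)^{-2}$ (which is what the large sieve gives essentially for free, after Cauchy–Schwarz against the number of moduli) down to $(\log t)^{-3}$, would be extracted by inserting a Selberg‑type upper‑bound sieve on the support of the outer coefficients in the bilinear decomposition.

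The main obstacle — the reason this remains a conjecture — lies in the Type~II range with $q$ close to $t$: one must control bilinear forms $\sum_{mn\le t}\alpha_m\beta_n\chi(mn)$ averaged over odd characters $\chi$ of prime modulus $q\in(u,2u]$, with both $m$ and $n$ near $\sqrt{t}\asymp\sqrt{u}$. The dispersion and spectral methods of Bombieri–Friedlander–Iwaniec and the Polymath/Zhang/Maynard refinements push Bombieri–Vinogradov slightly past the $\sqrt{u}$ barrier only for smooth or densely divisible moduli, whereas prime moduli in a dyadic window are precisely the hardest case; obtaining, on top of this, a clean saving of $(\log t)^{3}$ appears to require in effect GRH for the family of odd Dirichlet $L$‑functions of prime conductor in $(u,2u]$, which is not presently in reach. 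A proof would therefore hinge on a new, essentially unconditional, input into the Type~II bilinear sum over prime moduli — this is the decisive step, and the one I cannot honestly sketch.
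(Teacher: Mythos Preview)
Your assessment is correct, and there is nothing to compare: the statement you were asked to prove is stated in the paper as a \emph{conjecture} (Conjecture~\ref{EH2}), not as a theorem, and the paper offers no proof of it. It is introduced solely as a hypothesis under which Theorem~\ref{kappaqzeroEH2} is established. Your recognition that the inequality lies beyond current technology, together with your identification of the Type~II bilinear obstruction for prime moduli near~$t$, is exactly the right diagnosis; no further argument is expected or possible here.
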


We will also use the Riemann Hypothesis (RH$_{\textrm{odd}}(q)$) for the Dirichlet $L$-series
attached to odd Dirichlet characters.
\begin{conj}[RH$_{\textrm{odd}}(q)$]
\label{RH-odd-conjecture}
Let $q$ be an odd prime.
The non-trivial zeros of the Dirichlet $L$-series $L(s,\chi)$, 
where $\chi$ runs over the set of the odd Dirichlet character$\pmod*{q}$, 
are on the line $\Re(s)=1/2$.
\end{conj}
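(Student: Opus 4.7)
The statement in question is \emph{not} a theorem to be proved but rather Conjecture \ref{RH-odd-conjecture}, which is precisely the Generalized Riemann Hypothesis restricted to the family of Dirichlet $L$-series attached to odd characters modulo a prime $q$. The paper introduces it as a hypothesis to be assumed in the second half of Theorem \ref{gq-direct} (the conditional bound $|\kappa(q)| < \log_2 q + 1.41$), not as a result to be established. Since GRH for any non-trivial family of $L$-functions is a famous open problem, no proof plan I could outline would constitute an actual proof; the best I can do is describe the approximations to this conjecture that are known and explain where the genuine obstruction lies.

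The starting point of any serious attack would be the functional equation for $L(s,\chi)$ combined with its Hadamard product, which reduces the statement to controlling the real parts of the non-trivial zeros. The classical de la Vall\'ee Poussin argument, applied to the product $L(s,\chi_0)^3 L(s+it,\chi)^4 L(s+2it,\chi^2)$ and the non-negativity of $3+4\cos\theta+\cos 2\theta$, yields the zero-free region $\Re(s)\ge 1 - c/\log(q(|t|+2))$, uniformly for all primitive $\chi \pmod q$; since $\chi$ is odd (hence complex, because $\chi(-1)=-1$ forces $\chi^2\ne\chi_0$ for $q>3$), the real-zero exception that plagues quadratic characters does not appear, and one obtains an unconditional region without a possible Siegel zero. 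This is the strongest \emph{uniform} statement toward Conjecture \ref{RH-odd-conjecture} at present, and the bound on $\kappa(q)$ produced by this region is far weaker than what GRH$_{\mathrm{odd}}(q)$ would give.

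To push toward the critical line itself, the natural programme would then be: (i) establish zero density estimates of the form $N(\sigma,T,\chi)\ll (qT)^{A(1-\sigma)}(\log qT)^B$ summed over odd $\chi \pmod q$, in the spirit of Bombieri--Montgomery--Huxley; (ii) combine these with log-free estimates near $\sigma=1$, so that on average over $\chi$ the zeros behave as if GRH held; (iii) for each individual $\chi$, analyse the explicit formula for $L'/L(s,\chi)$ and attempt to exclude zeros off the critical line by a positivity argument on an auxiliary Dirichlet series. Steps (i) and (ii) are well-developed and in many applications serve as a substitute for GRH, which is in fact the spirit in which results of Kumar Murty (cited in the paper) are proved unconditionally on average. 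Step (iii), however, is exactly the point at which every known method fails.

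The main obstacle, therefore, is the one that obstructs all of GRH: there is currently no mechanism that forces an individual non-trivial zero of $L(s,\chi)$ onto the line $\Re(s)=1/2$. The available tools (functional equation, Euler product, convexity, moment bounds, Selberg's mollifier method) can produce a positive proportion of zeros on the critical line, as in Conrey-type results for $\zeta$, and can be extended to families of Dirichlet $L$-functions, but they cannot exclude a single off-line zero. For this reason I would not claim a plan that ends in a proof; the realistic contribution one could make is to sharpen the density estimates enough to replace Conjecture \ref{RH-odd-conjecture} in Theorem \ref{gq-direct} by an unconditional statement about $\kappa(q)$ outside a thin exceptional set of primes $q$, which is the analogue of what Bombieri--Vinogradov achieves for the Elliott--Halberstam conjecture earlier in this very section.
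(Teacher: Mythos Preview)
Your identification is correct: the statement is a conjecture (a restricted form of GRH), stated in the paper purely as a hypothesis for the second part of Theorem~\ref{gq-direct}, and the paper makes no attempt to prove it. There is nothing further to compare.

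One factual correction to your side discussion, however: the claim that an odd character $\chi \pmod q$ is necessarily complex for $q>3$ is false. When $q\equiv 3\pmod 4$ the Legendre symbol $(\cdot/q)$ satisfies $(-1/q)=-1$ and is therefore an odd \emph{real} character, so the Siegel-zero phenomenon is not automatically excluded by restricting to odd characters. This is precisely why the unconditional part of Theorem~\ref{gq-direct} carries the $\delta_q/(\beta_0(1-\beta_0)\log q)$ term. Your overall point stands, but the parenthetical justification should be removed.
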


We will also need the following prime pair estimate, which
is a type of result often encountered in sieve theory.
\begin{lem}
\label{lem1}
Let $a, t$ be fixed non-zero coprime 
integers with $t\ge 1$. The number of prime pairs $p,q \leq u$ with $p-a=tq$ is  
\[
\ll \frac{u}{\varphi(t) (\log (u/t))^2}\frac{|a|}{\varphi(|a|)}
\ll \frac{u\log_2 (|a|+1)}{\varphi(t) (\log (u/t))^2},
\]
where the implicit constants are absolute.
\end{lem}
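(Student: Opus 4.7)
The plan is to view this as a standard two-variable sieve problem. Writing $p=tq+a$, the constraints $p,q\le u$ bound $q$ by some $v\ll u/t$, with the degenerate case $u/t=O(1)$ handled by the trivial bound and absorbed into the implicit constants. The task then reduces to counting primes $q\le v$ for which the linear form $L(q)=tq+a$ is also prime.

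First I would apply a standard dimension-two upper-bound sieve (Selberg's sieve, or the Halberstam--Richert form in \emph{Sieve Methods}) to the pair of coprime linear forms $L_1(n)=n$, $L_2(n)=tn+a$, whose resultant $a$ is non-zero by hypothesis. This yields
\[
\#\{q\le v:q\text{ and }tq+a\text{ both prime}\}\ll \frac{v}{(\log v)^{2}}\prod_{p}\frac{1-\omega(p)/p}{(1-1/p)^{2}},
\]
where $\omega(p)$ counts residues $n\bmod p$ with $p\mid L_1(n)L_2(n)$. Using $\gcd(t,a)=1$, one computes $\omega(p)=2$ when $p\nmid ta$ and $\omega(p)=1$ when $p\mid ta$ (the residue $n\equiv-a/t$ either coincides with $n\equiv 0$ or is forbidden because the relevant form is identically non-zero mod $p$).

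The next step is to bound the singular series uniformly in $t,a$. It factors as
\[
\prod_{p\mid ta}\frac{p}{p-1}\cdot\prod_{p\nmid ta}\frac{(p-2)p}{(p-1)^{2}},
\]
and writing the second factor as $\bigl(\prod_{p\ge3}\frac{(p-2)p}{(p-1)^{2}}\bigr)/\prod_{p\mid ta,\,p\ge3}\frac{(p-2)p}{(p-1)^{2}}$ shows it is bounded by an absolute constant (the outer product converges to $2C_2$) times $\prod_{p\mid ta,\,p\ge3}\frac{(p-1)^{2}}{p(p-2)}$. Since $\frac{(p-1)^{2}}{p(p-2)}=1+\frac{1}{p(p-2)}$, this last product converges absolutely over all primes and is therefore itself $O(1)$. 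The singular series is thus $\ll\prod_{p\mid ta}\frac{p}{p-1}=\frac{|ta|}{\varphi(|ta|)}=\frac{t}{\varphi(t)}\cdot\frac{|a|}{\varphi(|a|)}$, where in the last equality I use $\gcd(t,a)=1$ to split the Euler factor. Inserting $v\ll u/t$ gives the first claimed inequality; the second follows from the classical estimate $n/\varphi(n)\ll\log\log(n+3)$, trivial for small $n$.

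The main technical subtlety is the uniformity of the implicit constants in $t$ and $a$, which is automatic from the dimension-two sieve (no asymptotic error terms intervene in the upper bound, unlike in the asymptotic Hardy--Littlewood conjecture). Minor boundary situations, where $v$ fails to be large or $|a|$ is comparable to $u$, contribute at most $O(1)$ to the count and are absorbed into the implicit constants.
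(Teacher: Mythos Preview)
Your approach is correct and is exactly what the paper does by citation: the paper simply invokes Satz~4.2 of Prachar and the worked examples in Murty's problem book, which carry out precisely the dimension-two upper-bound sieve you describe for the forms $n$ and $tn+a$, followed by the classical $n/\varphi(n)\ll\log\log n$ bound. Two small cosmetic points: the convergent product $\prod_{p\ge 3}\frac{p(p-2)}{(p-1)^2}$ is $C_2$, not $2C_2$; and you should remark that when $2\nmid ta$ the singular series vanishes (equivalently, one of $n,\,tn+a$ is always even), so the count is $O(1)$ and the bound is trivial---your decomposition of the second factor tacitly assumes $2\mid ta$.
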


\begin{proof}
The first estimate follows for example from \cite[Satz~4.2, p.~45]{Prachar}
with $a_1m+b_1=m$ and $a_2m+b_2=tm+a$ 
and from Examples \,9.4.6-9.4.8 in \cite{murtyy}.
The final
estimate is a consequence of the bound $\varphi(n) \gg n/ \log \log n$ 
(we refer to \cite[Theorem~13.14]{apos} for details).
\end{proof}

An important tool we will use is the following theorem. 
\begin{CThm}[Brun--Titchmarsh theorem]
\label{BT-thm}
Let $x,y>0$ and $a,d$ be positive integers such that $(a,d)=1$.
Then, uniformly for all $y>d$, we have
\begin{equation}
\label{BT-estim}
\pi(x+y;d,a) - \pi(x;d,a) < \frac{2y}{\varphi(d) \log(y/d)}.
\end{equation}
\end{CThm}
For a proof, see, e.g., Montgomery--Vaughan  \cite[Theorem~2]{MVsieve}.

\subsection{Admissible sets of large measure}
\par Let $\mathcal A=\{a_1,\ldots,a_s\}$ be a set
of $s$ distinct natural numbers.
We define the measure
\[
\mu(\mathcal A)=\sum_{k=1}^s\frac{1}{a_k}.
\]
Given a prime $p$, let 
$\omega(p)$ denote the number of solutions modulo 
$p$ of the equation
\begin{equation}
\label{omegaeq}
X\prod_{i=1}^s(a_iX+1)\equiv 0 \pmod*{p}.
\end{equation}
A set $\mathcal A$ is said to be \emph{admissible} if 
$\omega(p)<p$ for every prime $p$. As $\omega(p)\le s+1$,
we see that $\mathcal A$ is admissible, if and only if 
$\omega(p)<p$ for every prime $p\le s+1$. 
We observe that if we change $a_iX+1$ by
$a_iX-1$ in \eqref{omegaeq}, the number of solutions
is also still $\omega(p)$. 
\par The admissible sets
are relevant for determining which sets of linear forms can
(presumably) be infinitely often all simultaneously prime.

\begin{conj}[Hardy--Littlewood conjecture, lower bound version \cite{HardyL1923}]
\label{HLconjecture}
Suppose $\AA = \{a_1,\ldots,a_s\}$ is an admissible set.
Choose $b\in \{-1,1\}$.
Then the number of integers $n\le x$ such that the integers 
$n,a_1n+b,\ldots,a_sn+b$ are all prime is of cardinality 
$\gg_{\mathcal A} x/(\log x)^{s+1}$.
\end{conj}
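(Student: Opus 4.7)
This statement is one of the central open conjectures of analytic number theory, so the ``plan'' below is really a plan to \emph{derive the heuristic prediction} rather than to obtain a complete proof: no proof is within current reach.

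The natural approach is the Hardy--Littlewood circle method. I would introduce the smoothed counting function
\[
N^{\Lambda}(x) = \sum_{n \le x} \Lambda(n) \prod_{i=1}^{s} \Lambda(a_i n + b),
\]
represent each $\Lambda$ through its exponential sum, and integrate the resulting product against $e(-n\alpha)$ over $\alpha \in [0,1]$. The unit interval is then decomposed in the standard way into major arcs near rationals $a/q$ with $q$ small and the complementary minor arcs. On the major arcs, Siegel--Walfisz type control of $\sum_{n \le x} \Lambda(n) e(na/q)$ supplies the expected main term
\[
N^{\Lambda}(x) \sim \mathfrak{S}(\AA, b)\, x,
\]
with singular series
\[
\mathfrak{S}(\AA, b) = \prod_{p} \frac{1 - \omega(p)/p}{(1 - 1/p)^{s+1}},
\]
where $\omega(p)$ is exactly the count defined in \eqref{omegaeq} (independent of the sign of $b$, as is noted right after that display). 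Partial summation then converts the $\Lambda$-weighted statement into the desired lower bound $\gg_{\AA} x/(\log x)^{s+1}$ for the unweighted count.

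The key local check is that $\mathfrak{S}(\AA, b) > 0$. Admissibility, i.e.\ $\omega(p) < p$ for every prime $p$, makes every Euler factor strictly positive, and the trivial bound $\omega(p) \le s+1$ forces the $p$-th factor to be $1 + O_s(p^{-2})$ for $p > s+1$, so the product converges to a positive constant depending only on $\AA$. This is the one step of the plan that really goes through cleanly.

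The hard part, which blocks the entire programme, is the minor-arc estimate together with the parity phenomenon in sieve theory: bounding the minor-arc contribution for a product of $s+1$ prime-counting exponential sums is precisely the obstruction that prevents progress already for twin primes ($s = 1$, $\AA = \{2\}$, $b = 1$). With present-day techniques one can only establish the matching \emph{upper} bound $N(x) \ll_{\AA} x/(\log x)^{s+1}$, for instance by iterating the Brun--Titchmarsh inequality of Theorem \ref{BT-thm} or through Selberg's sieve, while no unconditional positive lower bound of the conjectured order is known for any $s \ge 1$. Consequently, the realistic output of the plan is the precise heuristic prediction with the constant $\mathfrak{S}(\AA, b)$; the conjecture itself is assumed, not proved, and is used as a hypothesis in several of the results of this paper.
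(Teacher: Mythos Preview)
Your assessment is correct: this is a conjecture, not a theorem, and the paper does not attempt to prove it. It is stated in Section~\ref{sec:prelim} purely as a hypothesis to be assumed in Theorems~\ref{noEHproof}, \ref{cr1}, \ref{cr2}, \ref{gqrange}, and related results. There is therefore no ``paper's own proof'' to compare against, and your explanation that the statement lies beyond current techniques, together with the standard circle-method heuristic leading to the singular series $\mathfrak{S}(\AA,b)$ and the observation that admissibility forces $\mathfrak{S}(\AA,b)>0$, is an appropriate and accurate response.
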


Actually, the full Hardy--Littlewood conjecture gives an asymptotic, 
rather than a lower bound:
\begin{conj}[Hardy--Littlewood conjecture, asymptotic version \cite{HardyL1923}]
\label{HLconjecture-asymp}
Suppose $\AA = \{a_1,\dotsc,$ $a_s\}$ is an admissible set.
Choose $b\in \{-1,1\}$.
Then the number of integers $n$ in the interval $(x,2x]$ 
such that the integers 
$n,a_1n+b,\ldots,a_sn+b$ are all prime is of cardinality 
\[
\sim \prod_p \Bigl(1- \frac{\omega(p)}{p}\Bigr) 
\Bigl(1-\frac{1}{p}\Bigr)^{-s-1} 
\frac{x}{(\log x)^{s+1}},
\quad
\textrm{as} \ x\to \infty.
\]

\end{conj}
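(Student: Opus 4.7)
The stated formula is the prime $k$-tuples conjecture of Hardy and Littlewood (here specialised to the linear forms $n, a_1 n+b, \ldots, a_s n+b$), which has remained famously open since 1923; so any honest proof proposal must begin by acknowledging that the plan is at best a heuristic derivation of the correct main term, plus a sieve-theoretic attack that is known to fall short. The heuristic I would write down first is this: for a ``random'' integer $n\in (x,2x]$ the probability of being prime is $\sim 1/\log x$, and the same holds for each $a_i n+b$ (each of which lies in an interval of length $\asymp x$). Under a naive independence assumption one obtains density $(\log x)^{-s-1}$, hence a raw count of $\sim x/(\log x)^{s+1}$. The singular series $\prod_p (1-\omega(p)/p)(1-1/p)^{-s-1}$ then enters as the correction factor coming from the non-independence modulo each prime $p$: the density of admissible residue classes modulo $p$ is $1-\omega(p)/p$ (this is precisely why $\omega(p)$ was defined via \eqref{omegaeq}), whereas the naive model predicts $(1-1/p)^{s+1}$, and the ratio of these two local densities is the local factor of the singular series.

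To turn the heuristic into an argument one naturally turns to sieve methods or the circle method. A concrete first step would be to attach Selberg-type sieve weights $\lambda_d$ to each of the $s+1$ linear forms and expand
\[
\sum_{x<n\le 2x}\mathbf{1}_{n\text{ prime}}\prod_{i=1}^{s}\mathbf{1}_{a_i n+b\text{ prime}}
\]
into a main term plus error, where the main term is evaluated using strong equidistribution inputs for primes in arithmetic progressions of Bombieri--Vinogradov type, cf.~\eqref{BV-type-ineq}, or stronger inputs under Conjecture \ref{EHconjecture}. Standard multiplicative function manipulations, combined with Mertens-type identities for the resulting local factors, recover exactly the singular series. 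An \emph{upper bound} of the correct order of magnitude is what this route actually delivers: the Selberg sieve yields $\ll_s \mathfrak{S}\, x/(\log x)^{s+1}$ with an explicit but non-sharp constant, and this is the best unconditional statement currently available.

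The main obstacle, insurmountable with present technology, is Selberg's \emph{parity barrier}: sieve methods are unable to distinguish configurations in which each of the $s+1$ integers has exactly one prime factor from those in which each has an odd number of prime factors, and therefore cannot produce a matching lower bound, let alone the sharp asymptotic. The circle method fares no better here: for $s\ge 2$ the minor arcs cannot be controlled by any known technique, since the multi-linear exponential sums involved defeat Vinogradov-type estimates. Consequently, what a realistic ``proof'' along these lines will produce is exactly the heuristic constant together with the matching upper bound; the asymptotic lower bound, being equivalent to a quantitative form of Dickson's conjecture, must remain conjectural, which is why the paper correctly labels the statement as such and only invokes it as an assumption in Theorem~\ref{cr1}.
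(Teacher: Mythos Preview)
The statement is a \emph{conjecture} in the paper, not a theorem: the paper offers no proof and uses it purely as a hypothesis (e.g.\ in Theorems~\ref{cr1} and~\ref{cr2}). You correctly identify this at the outset, give the standard heuristic derivation of the singular series, and explain the parity obstruction that prevents any unconditional proof; there is nothing in the paper to compare your discussion against, and your account of the status of the conjecture is accurate.
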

There is a specific case of Conjecture \ref{HLconjecture} that is especially famous.

\begin{conj}[Hardy--Littlewood conjecture for Sophie Germain primes \cite{HardyL1923}]
\label{HLsophie}
The number of primes $ p \leq x$ for which $2p+1$ is also prime 
is $\gg x/(\log x)^2$.
\end{conj}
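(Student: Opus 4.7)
The final statement, Conjecture \ref{HLsophie}, is the celebrated Hardy--Littlewood conjecture on Sophie Germain primes, which is widely believed but unproven. Any honest ``proof proposal'' therefore has to acknowledge that this is a special case of the twin-prime-type problem for the pair of forms $(n,\,2n+1)$, and that no unconditional method currently yields a lower bound of the predicted order $\gg x/(\log x)^2$. My plan is to outline which tools would give partial progress, which would give the full conjecture if they worked, and where the obstruction lies.

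The starting point is the Hardy--Littlewood heuristic: by the usual singular series computation for the admissible pair $\{n,2n+1\}$, one predicts the asymptotic
\begin{equation*}
\#\{p \leq x : 2p+1 \text{ prime}\} \sim 2C_2 \frac{x}{(\log x)^2},
\end{equation*}
where $C_2$ is the twin prime constant. A circle-method attack would split the integral representation of the counting function into major and minor arcs; the major arcs produce the expected main term via the Siegel--Walfisz theorem, but the minor arcs are not small enough in the binary setting, exactly as for the twin-prime problem. Hence the circle method gives the correct heuristic but no unconditional lower bound. An alternative route is the upper-bound sieve (Brun or Selberg), which yields the correct order of magnitude on top, namely $\#\{p \leq x : 2p+1 \text{ prime}\} \ll x/(\log x)^2$; this already shows that the predicted density is not too large, and this is the kind of input used repeatedly in the rest of the paper (cf.\ Lemma \ref{lem1} and the Brun--Titchmarsh estimate \eqref{BT-estim}).

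For the lower bound one would attempt a weighted sieve in the spirit of Chen's theorem: the sieve of dimension $2$ acting on the sequence $\{2p+1\}_{p \leq x}$ gives $\gg x/(\log x)^2$ primes $p$ such that $2p+1$ has at most two prime factors. Converting the almost-prime statement into a genuine prime statement is precisely the parity obstruction of Selberg, which no currently known method overcomes. The natural route to the full conjecture would be either (i) a bilinear-form cancellation estimate beyond the Bombieri--Vinogradov range, essentially equivalent to the Elliott--Halberstam conjecture \ref{EHconjecture} along with parity-breaking input, or (ii) a Goldston--Pintz--Y{\i}ld{\i}r{\i}m / Maynard--Tao style detection of bounded gaps in the shifted sequence. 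Both approaches run into the parity barrier when one insists that a specific linear form, rather than one member of a cluster, be prime.

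The main obstacle is therefore unavoidable with present technology: the parity problem prevents any sieve argument from producing a positive lower bound for a single prescribed linear form in a prime variable. Consequently, within the scope of this paper the statement is to be treated as a hypothesis, and the role of the proposal reduces to recording the heuristic justification and the sieve upper bound of matching order, which together make Conjecture \ref{HLsophie} a natural and uniform companion to the stronger Conjecture \ref{HLconjecture} invoked elsewhere in the text.
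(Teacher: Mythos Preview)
Your assessment is correct: the statement is labeled as a \emph{conjecture} in the paper, and the paper offers no proof of it whatsoever. It is simply stated as Conjecture \ref{HLsophie} and then invoked as a hypothesis in results such as Proposition \ref{mainprop}. Your recognition that the parity problem blocks any unconditional lower bound, and that the statement must be taken as an assumption, is exactly in line with how the paper treats it; the heuristic and sieve background you supply is sound supplementary commentary but goes beyond anything the paper itself records.
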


\par The following result, conjectured 
by Erd\H{o}s (1988), shows that there are admissible
sets having  arbitrarily large measure $\mu$.

\begin{Thm}[Granville \cite{Gr}]
\label{unbounded}
There is a sequence of admissible sets $\mathcal A_1,\mathcal A_2,\ldots$ 
such that $\lim_{j\to  \infty}\mu(\mathcal A_j)=\infty$. 
We have $\overline{\mathcal M} = [0,\infty]$, with $\overline{\mathcal M}$ 
the closure of the set $ \{\mu(\mathcal{A}) : \mathcal{A}$ is an admissible set$\}$. 
\end{Thm}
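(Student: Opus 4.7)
The plan is in two parts: first to construct admissible sets of arbitrarily large measure, then to deduce the closure statement from this together with a dilation argument.

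For the first part, I would build admissible sets $\mathcal A_1 \subset \mathcal A_2 \subset \cdots$ inductively. Start from $\mathcal A_0 = \emptyset$ and at step $k$ pick $a_k$ to be the smallest even integer exceeding $a_{k-1}$ such that $\mathcal A_k := \mathcal A_{k-1} \cup \{a_k\}$ remains admissible. For primes $p > k + 1$ the admissibility condition at $p$ is automatic, since $|\mathcal A_k| = k < p - 1$ means $\omega(p) \le k+1 < p$; so only finitely many constraints bind. For each prime $p \le k + 1$, the admissibility of $\mathcal A_{k-1}$ supplies a forbidden residue $c_p \in (\mathbb Z/p)^* \setminus \{a_i \bmod p\}$, fixed once and for all at the first step when $p$ becomes binding; every subsequent $a_k$ then has to satisfy $a_k \not\equiv c_p \pmod p$ (with $c_2 = 1$ forcing all $a_i$ even). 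By the Chinese Remainder Theorem and Mertens' theorem the valid residues form a set of density $\prod_{p \le k+1}(1 - 1/p) \sim e^{-\gamma}/\log k$, which heuristically gives $a_k - a_{k-1} = O(\log k)$, hence $a_k = O(k \log k)$ and $\mu(\mathcal A_k) \gg \sum_{j \le k} 1/(j \log j) \sim \log \log k \to \infty$.

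The main obstacle is turning this density heuristic into a rigorous upper bound on $a_k$. The fundamental lemma of sieve theory only supplies the density estimate in intervals of length exceeding a small power of the primorial $\prod_{p \le k+1} p = e^{k + o(k)}$, which vastly exceeds the target scale $k \log k$; in shorter ranges the sieve lower bound degrades sharply. To circumvent this, I would follow Granville and combine a Brun- or Selberg-type lower bound sieve with bounds on a Jacobsthal-type gap function adapted to the forbidden residues $\{c_p\}$, or else let the $c_p$ be chosen adaptively to maximize the abundance of small valid integers. The key analytic requirement is that the harmonic sum $\sum_k 1/a_k$ diverges, which is what drives $\mu(\mathcal A_k)\to\infty$.

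For the closure statement $\overline{\mathcal M} = [0, \infty]$, observe first that admissibility is preserved under dilation by any positive integer $c$: if $p \nmid c$, the substitution $Y = cX \pmod p$ leaves $\omega(p)$ invariant, while if $p \mid c$ each factor $c a_i X + 1 \equiv 1 \pmod p$ degenerates and the defining equation reduces to $X \equiv 0$, so $\omega(p) = 1$. One has $\mu(c\mathcal A) = \mu(\mathcal A)/c$. Given any $\lambda \in (0, \infty)$ and $\epsilon > 0$, take $\mathcal A$ from the first-part construction with $\mu(\mathcal A) = M > \lambda^2/\epsilon$, and set $c = \lceil M/\lambda \rceil$: then $\lambda(1 - \lambda/M) \le \mu(c \mathcal A) = M/c \le \lambda$, so $|\mu(c\mathcal A) - \lambda| < \epsilon$. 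Since singletons $\{N\}$ give $\mu = 1/N \to 0$ and the first part yields sequences with $\mu \to \infty$, we conclude $\overline{\mathcal M} = [0, \infty]$.
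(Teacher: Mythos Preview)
The paper does not itself prove this theorem; it is quoted from Granville \cite{Gr} and used as a black box, so there is no in-paper argument to compare against.

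Your Part 2, the closure statement via dilation, is correct and complete: the check that $c\mathcal A$ is admissible whenever $\mathcal A$ is, together with $\mu(c\mathcal A)=\mu(\mathcal A)/c$, cleanly reduces $\overline{\mathcal M}=[0,\infty]$ to the unboundedness assertion.

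Part 1, however, has a genuine gap which you flag but do not close, and the tools you cite are not strong enough to close it. With the forbidden residues $(c_p)_{p\le k+1}$ fixed, the set $S_k=\{n:n\not\equiv c_p\ (p\le k+1)\}$ is a translate (via CRT) of the integers coprime to the primorial $P_{k+1}$, so Iwaniec's Jacobsthal bound applies and gives maximal gaps $\ll(\pi(k))^2(\log\pi(k))^2\asymp k^2$; feeding this into the greedy recursion yields only $a_k\ll k^3$, and $\sum k^{-3}$ converges. Lower-bound sieves fare no better: the fundamental lemma controls $|S_k\cap[1,N]|$ only once $N$ exceeds a fixed power of $P_{k+1}=e^{(1+o(1))k}$, vastly beyond the target scale $k\log k$. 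The margin is genuinely thin—anything weaker than roughly $a_k\ll k\log k\,(\log\log k)^{O(1)}$ fails to make $\sum 1/a_k$ diverge—so the heuristic cannot simply be upgraded by citing standard sieve machinery. Your remark about choosing the $c_p$ adaptively ``to maximize the abundance of small valid integers'' is pointing in the right direction, and is closer in spirit to what Granville actually does, but as written this is an intention rather than an argument. Without supplying that construction (or an equivalent one), the first part remains a sketch with its central analytic step missing.
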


Now, we record a useful result which plays an important role in the proof of Theorem \ref{cr1}.
\begin{lem} \label{CGlem} \cite[Lemma~6.1]{CG}
Let $M$ denotes a set of non-zero integers and $n$ be a natural number.
For the set $\Sigma(M, n) = \big \{\mu(M) : M \subset \mathbb{Z}^* , \vert M \vert < n\big \},$ the following holds.
\begin{enumerate}[label={(\roman*)}, wide, nosep, after=\vspace{4pt}]
     \item For any rational number $r$ there exists a positive constant $\nu(r)$ and a natural number $n(r)$
     such that if $s \in \Sigma(M, n(r)+1)$ with $\vert r-s \vert < \nu(r)$, then $r=s$.
     
      \item For any irrational number $r$ and for all $n$, there exists a positive constant
    $\nu(r,n)$ such that there are no elements $s \in \Sigma(M, n)$
    with $\vert r-s \vert < \nu(r,n).$
\end{enumerate}
\end{lem}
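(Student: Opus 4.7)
My plan is to reduce both parts to the topological claim that, for each $n\ge 1$, the set $\Sigma(M,n)$ is closed in $\mathbb{R}$ and every accumulation point of $\Sigma(M,n)$ lies in $\Sigma(M,n-1)$. Granting this claim, part (ii) is immediate: every element of $\Sigma(M,n)$ is rational (a finite sum of rationals), so an irrational $r$ lies outside the closed set $\Sigma(M,n)$, and its positive distance from $\Sigma(M,n)$ serves as $\nu(r,n)$. For part (i) I rely on the fact that every rational $r$ admits a representation $r=\sum_{i=1}^{k}1/m_{i}$ with distinct nonzero integers $m_{i}$ (for $r>0$ this is a signed Egyptian-fraction expansion, for $r<0$ one negates, and for $r=0$ one takes the empty sum). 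Define $n(r)$ to be the minimum such $k$; then $r\in\Sigma(M,n(r)+1)\setminus\Sigma(M,n(r))$. The topological claim forces $r$ to be isolated in $\Sigma(M,n(r)+1)$, because any sequence of distinct elements of that set converging to $r$ would place $r$ in $\Sigma(M,n(r))$, contradicting minimality. The isolation radius then furnishes the required $\nu(r)$.

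The topological claim itself I would prove by induction on $n$. The case $n=1$ gives $\Sigma(M,1)=\{0\}$, which is trivially closed with no accumulation points. For the inductive step I take $s_{j}=\mu(M_{j})\in\Sigma(M,n)$ converging to some $s^{\ast}$ and pass to a subsequence in which either $|M_{j}|<n-1$ for every $j$, in which case $s^{\ast}\in\Sigma(M,n-1)$ by the induction hypothesis, or $|M_{j}|=n-1$ throughout. If in the latter case the elements of the $M_{j}$ remain uniformly bounded, only finitely many distinct sets can occur, so $s_{j}$ is eventually constant and $s^{\ast}\in\Sigma(M,n)$; otherwise I may assume that one element of $M_{j}$ has absolute value tending to infinity, whence its reciprocal tends to $0$ and removing it leaves a sequence in $\Sigma(M,n-1)$ that still converges to $s^{\ast}$. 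By the induction hypothesis $\Sigma(M,n-1)$ is closed, so $s^{\ast}\in\Sigma(M,n-1)$. This yields simultaneously the closedness of $\Sigma(M,n)$ and the inclusion of its accumulation points in $\Sigma(M,n-1)$.

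The principal subtlety is checking that an arbitrary rational really does admit a signed unit-fraction expansion with \emph{distinct} denominators, so that $n(r)$ is well-defined; this follows from the greedy Egyptian-fraction algorithm for positive rationals combined with the splitting identity $1/m=1/(m+1)+1/(m(m+1))$, which allows one to enforce distinctness whenever denominators collide. A secondary technical point in the induction is the bounded subcase, where one extracts a subsequence on which $M_{j}$ is literally constant: this is legitimate because for any bound $B$ there are only finitely many subsets of $\{-B,\dots,B\}\setminus\{0\}$ of a given cardinality. With these ingredients in place, both parts of the lemma follow from the topological structure of $\Sigma(M,n)$ together with the minimal-representation choice of $n(r)$.
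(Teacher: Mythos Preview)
The paper does not prove this lemma; it merely quotes it from Croot--Granville \cite[Lemma~6.1]{CG}. Your argument is correct and is essentially the natural one: the inductive proof that each $\Sigma(M,n)$ is closed with accumulation set contained in $\Sigma(M,n-1)$ (via the bounded/unbounded dichotomy on the sets $M_j$) is exactly the mechanism behind the Croot--Granville lemma, and your deduction of (i) and (ii) from this structure together with the existence of signed Egyptian-fraction representations is sound.

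One small point worth tightening in the write-up: in the inductive step, when you argue that an accumulation point $s^\ast$ of $\Sigma(M,n)$ lies in $\Sigma(M,n-1)$, you should take the approximating sequence with $s_j\ne s^\ast$ from the outset, so that the ``bounded'' subcase (which only yields $s^\ast\in\Sigma(M,n)$, not $\Sigma(M,n-1)$) is immediately excluded by the resulting contradiction $s_j=s^\ast$. You do indicate this, but it deserves to be stated explicitly rather than folded into the closedness argument.
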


\section{Proof of Theorem~\ref{VK1}} 
\label{sec:meangamma_plus}

The proof of Theorem~\ref{VK1} exactly fits to what was done in~\cite{KM}.
In what follows $\chi$ denotes a Dirichlet character modulo $q$.
Logarithmic differentiation of $L(s,\chi)$ gives
\[
-\frac{L'(s,\chi)}{L(s,\chi)}=\sum_{n\ge 1}\frac{\Lambda(n)}{n^s}\chi(n), \quad  \Re(s) \ge 1.
\]
For any real large enough $x > 1$, we consider an
approximation
\begin{equation*}
%\label{phidef}
\Phi_{\chi}(x) = \frac{1}{x} \int_{1}^{x} \Bigl( \sum_{n\le t}{\frac{\Lambda(n)}{n}}\chi(n)\Bigr)dt
\end{equation*}
of $-L'(1,\chi)/L(1,\chi)$.
This suggests to rewrite equation \eqref{gammaplus} as
\begin{equation*} 
%\label{gamma+dec}
\gamma_q^+ = \Euler 
+  
\sum_{\substack{\chi \neq \chi_0 \\ \chi(-1)=1}}\Bigl(\frac{L'(1,\chi)}{L(1,\chi)}+\Phi_{\chi}(x)\Bigr)
-
\sum_{\substack{\chi \neq \chi_0 \\ \chi(-1)=1}}\Phi_{\chi}(x). 
\end{equation*}
For $Q\geq 2$ and every $x>1$, we have
\begin{equation} \label{eq2}
\sum_{Q/2< q\leq Q} |\gamma_q^+ | \ll
\pi^*(Q) \log Q +\sum_{Q/2< q\leq Q}\Bigl \vert \sum_{\substack{\chi \neq \chi_0 \\ \chi(-1)=1} } \Phi_{\chi}(x)\Bigr|,
\end{equation}
where we  used \cite[Proposition~3.1]{KM} to obtain the bound
\[
\sum_{Q/2< q\leq Q}\Bigl \vert \sum_{\substack{\chi \neq \chi_0 \\ \chi(-1)=1} } 
\Bigl(\frac{L'(1,\chi)}{L(1,\chi)}+\Phi_{\chi}(x)\Bigr)\Bigr \vert  
\ll 
\pi^*(Q)\log Q.
\]
(Note that \cite[Proposition~3.1]{KM} can be 
applied here since its estimate is essentially given by the exceptional
zero contribution; hence the additional condition we have on the parity of the Dirichlet characters
does not change the picture.)
Thus it remains to obtain the same bound for the double sum in \eqref{eq2}.
By the definition of $\Phi_\chi(x)$, one clearly has
\begin{equation} \label{sumofPhi}
\sum_{\substack{\chi \neq \chi_0 \\ \chi(-1)=1} } \Phi_{\chi}(x) 
= 
\frac{1}{x} \int_{1}^{x} 
\Bigl( 
\sum_{n \le t} \frac{\Lambda(n)}{n} \sum_{\substack{\chi \neq \chi_0 \\ \chi(-1)=1} } \chi(n) 
\Bigr) dt.
\end{equation}
The orthogonality of even characters 
entails that
\begin{equation*}
%\label{orteven}
\frac{2}{q-1} \sum_{\chi(-1)=1} \chi(a) \overline{\chi}(b) = \begin{cases}
1, & b \equiv \pm a  \pmod*{q},\\
0, &\text{otherwise}.
\end{cases}
\end{equation*}
Using this, we obtain
\[
\sum_{n \le t} \frac{\Lambda(n)}{n} \sum_{\substack{\chi \neq \chi_0 \\ \chi(-1)=1} } \chi(n) 
= 
\frac{q-1}{2} \sum_{\substack{n \le t \\ n \equiv \pm 1  \pmod*{q}}}\!\! \frac{\Lambda(n)}{n} 
- 
\sum_{\substack{n\leq t\\ (n, q)=1}} \frac{\Lambda(n)}{n}. 
\]
Since by assumption $q$ is prime, we clearly have
\[
\sum_{\substack{n\leq t\\ (n, q)=1}}\frac{\Lambda(n)}{n}
=\sum_{n\leq t}\frac{\Lambda(n)}{n}+O\Bigl(\frac{\log q }{q}\Bigr).
\]
Thus \eqref{sumofPhi} becomes
\begin{align} 
\notag
\sum_{\substack{\chi \neq \chi_0 \\ \chi(-1)=1} } \Phi_{\chi}(x) &
= \frac{1}{x} \int_{1}^{x} \Bigl(\frac{q-1}{2} 
\sum_{\substack{n \le t \\ n \equiv \pm 1  \pmod*{q}}}\!\! \frac{\Lambda(n)}{n} - \sum_{n \le t} \frac{\Lambda(n)}{n} 
+ O\Bigl(\frac{\log q }{q}\Bigr)\Bigr) dt
\\&  \label{sumofphigen}
= \frac{1}{x} \int_{1}^{x} \Bigl(\frac{q-1}{2} 
\sum_{\substack{n \le t \\ n \equiv \pm 1  \pmod*{q}}}\!\! \frac{\Lambda(n)}{n} - \sum_{n \le t} \frac{\Lambda(n)}{n} \Bigr) dt 
+ {O\Bigl(\frac{\log q}{q}\Bigr)}.
\end{align}
Further, by partial summation we have
\begin{align} 
\notag
\sum_{n\leq t}\frac{\Lambda(n)}{n}
&= \frac{\psi (t)}{t}+\int\limits_{1}^{t}\frac{\psi (u)}{u^2}\, du,\\
\label{pslam}
\sum_{\substack{n\leq t\\ n\equiv \pm 1 \pmod*{q}}}\!\! \frac{\Lambda(n)}{n} &=
\frac{\psi(t; q, \pm 1)}{t} + \int_{1}^{t}\frac{\psi(u;q,\pm 1)}{u^2}\, du,
\end{align}
where $\psi(t;q,\pm 1) = \psi(t;q, 1)+\psi(t;q,-1)$.
For a fixed non-zero integer $a$, define
\begin{equation*} 
%\label{def1}
S(Q, a, x)=\int_{1}^{x} \Bigl( \sum_{Q/2< q\leq Q} \Bigl \vert (q-1) \psi(u;q,a)- \psi(u)\Bigr \vert  \Bigr) \frac{du}{u^2}
\end{equation*}
and set $S(Q,\pm 1, x) = S(Q,-1,x) + S(Q,1,x)$.
Then on taking absolute values in \eqref{sumofphigen}
and summing over $Q/2 < q \le Q$,  
we see that the contribution of the first terms in \eqref{pslam} is bounded by
\[
\frac{1}{x} \int_{1}^{x} \Bigl(\sum_{Q/2< q\leq Q} \Bigl \vert \frac{q-1}{2} \psi(t;q,\pm 1)- \psi(t)\Bigr \vert  \Bigr) \frac{dt}{t} 
\ll S(Q,\pm 1, x).
\]
Working analogously with the integral terms of \eqref{pslam}, and enlarging the internal integration 
interval  from $[1,t]$ to $[1,x]$, we bound its contribution by
\[
\frac{1}{x} \int_{1}^{x}   \int_{1}^{t} \Bigl(\sum_{Q/2 < q \le Q} 
\Bigl \vert  \frac{q-1}{2} \psi(u;q,\pm 1)-\psi(u)\Bigr \vert  
\Bigr) \frac{du}{u^2}  dt \ll S(Q,\pm 1,x).
\]
Finally, on combining two equations above and estimating the last term of \eqref{sumofphigen} via Mertens' theorem as
\[
\frac{1}{x} \int_{1}^{x} \Bigl(\sum_{Q/2 < q \le Q} \frac{\log q}{q}\Bigr) dt \ll 1,
\]
we conclude that
\begin{equation*} 
%\label{eq4}
\sum_{Q/2< q\leq Q}\Bigl \vert \sum_{\substack{\chi \neq \chi_0\\ \chi(-1)=1} } \Phi_{\chi}(x)\Bigr|
\ll S(Q,-1, x)+S(Q,1,x)+\pi^*(Q).
\end{equation*}

To complete the proof of Theorem~\ref{VK1}, it remains to 
show that $S(Q, a, x)\ll \pi^*(Q)\log Q$ for $a = \pm 1$ and for every $x>1$.
We only do this for $a=1$, the other case
being similar\footnote{The only difference being 
that in \eqref{defS_1} the integration range starts at $Q/2-1$ instead of $Q/2$ since $\psi(u;q,-1)=0$ for $u < q-1$.}
\par In the range $u>Q^3$, one can estimate {the contribution to 
$S(Q,1,x)$} using
\eqref{BV-type-ineq} with ${\mathcal Q}(u) =u^{1/3}$.
Notice, that the choice of exponent is arbitrary here, as we only need to satisfy the conditions 
of the Bombieri--Vinogradov theorem in this range, namely, we need $Q < u^{1/2-\varepsilon}$, 
hence any range $u > Q^a$ with $a >2+\varepsilon$ will work here.
This gives rise to
\[
\sum_{Q/2< q\leq Q} \Bigl \vert  (q-1) \psi(u;q,1)
- \psi(u)\Bigr \vert  \ll \frac{Qu}{(\log u)^3},
\]
which on enlarging the integration interval 
from $Q^3\le u\le x$ to $u\ge Q^3$, leads to a contribution
\begin{equation*} 
%\label{sumofPhifin}
\ll Q\int_{Q^3}^\infty \frac{u}{(\log u)^{3}} \, \frac{du}{u^2} \ll Q \ll \pi^*(Q) \log Q.
\end{equation*}
(Here we used that $x$ is large enough, namely $x = Q^3 +z$, with $z>0$, large, but fixed.)
\par Next suppose that $u\le Q^3$. 
In this range, we estimate the contribution 
to $S(Q,1,x)$ trivially by
\begin{equation}
\label{StoS1}
\ll S_1(Q)+ 
\sum_{Q/2< q\leq Q}\int_{1}^{Q^3}  \psi(u)\frac{du}{u^2},
\end{equation}
where
\begin{equation} \label{defS_1}
S_1(Q)=\int_{1}^{Q^3} \sum_{Q/2< q\leq Q}\!\! (q-1) \psi(u;q,1) \, \frac{du}{u^2}
=
\int_{Q/2}^{Q^3} \sum_{Q/2< q\leq Q}\!\! (q-1) \psi(u;q,1) \, \frac{du}{u^2},
\end{equation}
on observing that $\psi(u;q,1)=0$ for $u\le q$ (in case $a=-1$ 
the integration range starts at $Q/2-1$ instead of $Q/2$ since $\psi(u;q,-1)=0$ for $u < q-1$).
Then, using that
$\psi(u)\ll u$, we see that the second term in \eqref{StoS1} is
$\ll \pi^*(Q)\log Q$ and it remains to establish the same bound for 
$S_1(Q)$. 
Notice that
\[
\sum_{Q/2 < q \le Q} (q-1) \psi(u;q,1) = \sum_{n \le u} \Lambda(n) \sum_{\substack{q \mid (n -1) \\ Q/2 < q \le Q}} (q-1).
\]
The contribution of prime powers to $S_1(Q)$ satisfies
\[
\sum_{\substack{p^k \le u \\k \ge 2}} \log p 
\sum_{\substack{Q/2 < q \le Q \\ q \mid (p^k - 1)}} (q-1)
 \ll 
Q 
\sum_{\substack{p^k \le u \\ k \ge 2}}  \log p
\sum_{\substack{Q/2 < q \le Q \\ q \mid (p^k - 1)}} 1
 \ll
Q \sum_{p \le \sqrt{u}} \log p\, \frac{\log u}{\log p} \ll Q 
\sqrt{u},
\]
where we invoked the
trivial estimate $q-1\le Q$ and used that the sum over $q$ has at most a uniformly bounded number of terms 
(since $q>Q/2$ and 
$q \mid (p^k-1)$ with $p^k-1\le Q^3$).
Thus the contribution of prime powers  for $u \le Q^3$ to 
$S_1(Q)$  is
\[
\ll Q \int_{Q/2}^{Q^3} \frac{\sqrt{u}}{u^2}du\ll \sqrt{Q}\ll \pi^*(Q) \log Q,
\]
and that of the primes is bounded
above by
\begin{equation}
\label{lemapp}
\sum_{p \le u} \log p 
\sum_{\substack{Q/2 < q \le Q \\ q \mid (p - 1)}} (q-1) 
= 
\sum_{t \le 2u/Q} 
\sum_{\substack{p \le u \\ p- 1 = tq \\Q/2 < q \le Q}} (q-1) \log p
\ll Q \sum_{t \le 2u/Q} \frac{u\log u}{\varphi(t) (\log(u/t))^2},
\end{equation}
where we invoked Lemma \ref{lem1}. 
In the fourth sum in \eqref{lemapp}, we have 
$p\le \min\{u,Qt+1\}$, which suggests to distinguish two subcases.
If $u < Qt$, then the contribution of primes to $S_1(Q)$ is bounded 
by 
\begin{align*}
%\label{hari}
%\notag
&\ll \sum_{t \le 2Q^2} \int_{Qt/2}^{Qt} \frac{Q}{\varphi(t)} \frac{1}{(\log Q)^{2}} \frac{\log u}{u} du \ll 
\frac{Q}{(\log Q)^{2}} \sum_{t \le 2Q^2} \frac{1}{\varphi(t)} \bigl((\log (Qt))^2- (\log (Qt/2))^2\bigr) \\
%\notag
&\ll Q \ll \pi^*(Q) \log Q,
\end{align*}
where we used $u\le Q^3$, $u>Qt/2$, the estimate $(\log (Qt))^2- (\log (Qt/2))^2 \ll \log(Qt)$ and 
\begin{equation*} 
%\label{phirecip}
\sum_{t \le x} \frac{1}{\varphi(t)} \ll \log x.
\end{equation*}

\noindent Further, for $u\ge Qt$
we have 
$p\le \min\{u,tQ+1\}\le tQ+1$ and so
$(q-1)\log p\ll Q\log(Qt)$, which on invoking \eqref{lemapp}
leads to  the bound  

\begin{align} 
\notag
&\ll \int_{Qt}^{Q^3} \Bigl( \sum_{t \le 2u/Q} \sum_{\substack{p \le tQ+1 \\ Q/2 < q \le Q \\ p-1=tq}}  (q-1) \log p \Bigr) \frac{du}{u^2} 
%\\ \notag&
\ll \int_{Qt}^{Q^3} Q\log(Qt)\Bigl( \sum_{t \le 2u/Q} \sum_{\substack{p \le tQ+1 \\ Q/2 < q \le Q \\ p-1=tq}}  1 \Bigr) \frac{du}{u^2}
\\ \notag &
\ll \sum_{t \le 2Q^2} Q \log (Qt) \frac{Qt}{\varphi(t) (\log Q)^2} \int_{Qt}^\infty \frac{du}{u^2}
%\\ & 
\ll \frac{Q}{(\log Q)^2}\sum_{t \le 2Q^2} \frac{\log (Qt)}{\varphi(t)} 
\\ 
%\label{hari2} &
\notag &
\ll Q \ll \pi^*(Q) \log Q,
\end{align}
for the contribution to $S_1(Q)$.
Adding the bounds for the contributions of the
$u<Qt$, respectively $u\ge Qt$, we obtain $S_1(Q)\ll \pi^*(Q)\log Q$, as 
desired.

\section{Kummer's conjecture and Proof of Theorem~\ref{alerq}}
\label{sec:Kummer}

\par The orthogonality property of odd characters, 
\begin{equation*} 
\label{ortodd}
\frac{2}{q-1} \sum_{\chi(-1)=-1} \chi(a) \overline{\chi}(b) = \begin{cases}
\pm 1, & b \equiv \pm a  \pmod*{q},\\
0, &\text{otherwise},
\end{cases}
\end{equation*}
gives us
$$\sum_{\chi(-1)=-1} \log(L(s,\chi)) = \frac{q-1}{2}\lim_{x\to  \infty}
\Bigl(
\sum_{\substack{m\ge 1 ;\ p^m\le x\\ p^m\equiv 1 \pmod*{q}}}\!\! \frac{1}{mp^{ms}} 
- 
\sum_{\substack{m\ge 1 ;\ p^m\le x \\ p^m\equiv -1 \pmod*{q}}}\!\! \frac{1}{mp^{ms}}\Bigr),
$$
where the sum is over all prime powers 
$p^m\le x$. In the papers \cite{deb,MasMon,Puchta} just mentioned, the authors consider the latter function
in a neigborhood of $s=1$ (but whereas
Debaene \cite{deb} and Puchta \cite{Puchta} take higher derivatives into account,
Masley and Montgomery \cite{MasMon} stopped at the first derivative). Here, we will actually set $s=1$,
which in combination with \eqref{hasse} yields
\begin{equation}
\label{loggie}
r(q)=\log R(q) =\frac{q-1}{2}\lim_{x\to  \infty}
\Bigl(\sum_{\substack{m\ge 1 ;\ p^m\le x\\ p^m\equiv 1 \pmod*{q}}}\!\! \frac{1}{m p^m}-
\sum_{\substack{m\ge 1; \ p^m\le x \\ p^m\equiv -1 \pmod*{q}}}\!\! \frac{1}{mp^m}\Bigr).
\end{equation}
\begin{Defi}
\label{fqdefi}
The argument in the limit we denote by $f_q(x)$ and $f_q=\lim_{x\to  \infty}f_q(x)$.
\end{Defi}
Note that Kummer's conjecture is equivalent with $f_q=o(1/q)$ as $q$ tends to infinity.
The idea is now to choose $x$ as small 
as possible so that the resulting error 
$f_q-f_q(x)$ is still reasonable. In 
attempting to decrease $x$, the Bombieri--Vinogradov Theorem \ref{BV-type-ineq} and 
the Brun--Titchmarsh Classical Theorem \ref{BT-thm} play a crucial role.
The main contribution to $f_q(x)$ comes from the
term with $m=1$, and is denoted by $g_q(x)$:
\begin{equation}
\label{geeq}
g_q(x)=\sum_{\substack{p\le x\\ p\equiv 1 \pmod*{q}}}\!\! \frac{1}{p}-
\sum_{\substack{p\le x \\ p\equiv -1 \pmod*{q}}}\!\! \frac{1}{p}.
\end{equation}
Taking all this into account
Granville \cite{Gr} showed that if Kummer's conjecture is true then for every $\delta>0$ we must have
\begin{equation*}
%\label{Kummerequivalence}
g_q(q^{1+\delta})=o\Bigl(\frac{1}{q}\Bigr),
\end{equation*}
for all but at most $2x/(\log x)^{3}$ primes $q\le x$. He used this to show that
$c^{-1}\le R(q)\le c$
for a positive proportion $\rho(c)$ of primes $p\le x$, where $\rho(c)\to  
1$ as $c$ tends to infinity. 
\par Murty and Petridis \cite{MP} improved this as follows.
\begin{Thm}
There exists a constant $c>1$ such that for a sequence of primes with natural density 
1 we have $$ \max\{R(q),R(q)^{-1}\}\le c.$$ 
If the Elliott--Halberstam conjecture \ref{EHconjecture} is true, then
we can take $c=1+\epsilon$ for any fixed $\epsilon>0$.
\end{Thm}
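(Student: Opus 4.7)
My plan is to follow the method Granville outlined for Kummer's conjecture, supplemented by a refined mean-value argument in order to extract natural density one.

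From \eqref{loggie} one has $r(q) = \frac{q-1}{2} f_q$. The contribution of prime powers with $m \ge 2$ to $\frac{q-1}{2} f_q$ is $O(1)$ uniformly in $q$, since any prime $p$ with $p^m \equiv \pm 1 \pmod q$ must satisfy $p^m \ge q-1$, and the relevant residue classes modulo $q$ are $O(m)$ in number for each $m$. It therefore suffices to bound $\frac{q-1}{2} g_q$ with $g_q$ as in \eqref{geeq}. Split $g_q = g_q(x) + (g_q - g_q(x))$ with a cutoff $x = q^A$. For the tail, partial summation together with the Bombieri--Vinogradov estimate \eqref{BV-type-ineq} yields
\[
\sum_{Q/2 < q \le Q} (q-1) \bigl|g_q - g_q(x)\bigr| \ll \frac{\pi^*(Q)}{(\log Q)^B}
\]
for any $B > 0$, provided $A = A(B)$ is taken large enough. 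By Markov's inequality, the tail contribution to $r(q)$ exceeds a fixed constant for at most $o(\pi^*(Q))$ primes $q \in (Q/2, Q]$.

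For the truncated main term $\frac{q-1}{2} g_q(x)$, I would establish the pointwise bound $(q-1)|g_q(x)| \ll 1$ outside a set of primes of density zero via a second-moment argument. Expanding $(q-1)^2 g_q(x)^2$ and summing over $q \in (Q/2,Q]$, the diagonal terms are controlled by Brun--Titchmarsh (Classical Theorem \ref{BT-thm}), while the off-diagonal pieces, counting pairs of primes $p_1, p_2 \le x$ simultaneously congruent to $\pm 1 \pmod q$, are controlled by Lemma \ref{lem1}. Combined with Chebyshev's inequality, one concludes that there is an absolute $C$ such that $(q-1)|g_q(x)| \le C$ on a density-one subset, so the unconditional statement follows with $c = e^{C + O(1)}$.

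Under Conjecture \ref{EHconjecture}, the cutoff can be relaxed to $x = q^{1+\delta}$ with $\delta > 0$ arbitrarily small while keeping the tail negligible on average. With this smaller $x$, Brun--Titchmarsh applied directly gives $(q-1)|g_q(x)| \ll \delta$ on density-one primes, so $|r(q)| \le \epsilon$ for any prescribed $\epsilon > 0$, yielding $c = 1+\epsilon$.

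The main obstacle is achieving density one rather than merely a positive proportion: the exceptional set in the truncated step must be shown to be $o(\pi^*(Q))$, not simply $O(\pi^*(Q)/C^2)$ (which would follow at once from a bounded second moment and only give a density $\rho(c) < 1$). This forces Lemma \ref{lem1} to be applied with sharp uniformity in the modulus $q$ when summing the off-diagonal prime-pair contributions, so that no logarithmic factor is lost in assembling the second moment estimate.
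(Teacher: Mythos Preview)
This theorem is not proved in the paper; it is quoted from Murty and Petridis \cite{MP}. So there is no ``paper's own proof'' to compare against here. That said, your outline has two substantive gaps.

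\textbf{The unconditional part.} You correctly identify the obstacle in your last paragraph, but you do not overcome it. A bounded second moment $\sum_{Q/2<q\le Q}(q-1)^2 g_q(x)^2 \ll \pi^*(Q)$ plus Chebyshev yields only that $(q-1)|g_q(x)|>C$ for at most $O(\pi^*(Q)/C^2)$ primes. That is exactly Granville's result ($\rho(c)\to 1$ as $c\to\infty$), not density one for a fixed $c$. There is no hidden sharpening of Lemma~\ref{lem1} that rescues this: the second moment genuinely has size $\asymp\pi^*(Q)$, because a positive proportion (e.g.\ Sophie Germain $q$) contributes $\asymp 1$ to the summand. What actually works is a different decomposition. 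Split at $q^{1+\delta}$ and at $q^{2+\delta/2}$; use Bombieri--Vinogradov above $q^{2+\delta/2}$, use Hooley's ``almost all'' Brun--Titchmarsh (as in the proof of Proposition~\ref{goneplus}, see \cite[Corollary~3.2]{CG}) on the middle range to get $(q-1)|g_q(q^{2+\delta/2})-g_q(q^{1+\delta})|\le 4+o(1)$ outside a density-zero set, and use Lemma~\ref{lem1} (summed over $m\le q^\delta$) to show that for all but $O(\delta\,\pi^*(Q))$ primes $q$ there are \emph{no} primes $\equiv\pm1\pmod q$ below $q^{1+\delta}$, so $g_q(q^{1+\delta})=0$. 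Now let $\delta\to 0$: the exceptional proportion is $O(\delta)+o(1)$, while the bound $4+o(1)$ is uniform in small $\delta$. This gives density one for a fixed $c$.

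\textbf{The Elliott--Halberstam part.} Your claim that ``Brun--Titchmarsh applied directly gives $(q-1)|g_q(q^{1+\delta})|\ll\delta$ on density-one primes'' is false. Brun--Titchmarsh in the range $(q,q^{1+\delta})$ is essentially vacuous (the denominator $\log(x/q)=\delta\log q$ is small), and in any case the bound it gives is pointwise, not a density-one statement. Concretely, whenever $2q+1$ is prime one has $(q-1)g_q(q^{1+\delta})\approx 1/2$ regardless of $\delta$. The correct argument is again the sieve count: the set of $q$ with some prime $\equiv\pm1\pmod q$ in $(q,q^{1+\delta})$ has size $O(\delta\,\pi^*(Q))$ by Lemma~\ref{lem1}, so outside this set $g_q(q^{1+\delta})=0$; then Conjecture~\ref{EHconjecture} handles the tail $g_q-g_q(q^{1+\delta})=o(1/q)$ for all but $o(\pi^*(Q))$ primes, and letting $\delta\to 0$ yields $|r(q)|<\epsilon$ on a density-one set.
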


Thus typically $R(q)$ is close to $1$, but conjecturally very different behavior 
is also possible (on very thin sets of primes), see also Section \ref{sec:spec}.
\begin{Thm}[Granville \protect{\cite[Theorems 2 and 4]{Gr}}]
If the lower bound version of the Hardy--Littlewood conjecture \ref{HLconjecture} is true, and also
the Elliott--Halberstam conjecture \ref{EHconjecture}, then for any admissible set 
${\mathcal A}$, the numbers
$e^{\mu(\mathcal A)/2}$ and $e^{-\mu(\mathcal A)/2}$ are both
limit points of the sequence $\{R(q):q\text{~is~prime}\}$.
Furthermore, this sequence has $[0,\infty]$ as set of limit points.
\end{Thm}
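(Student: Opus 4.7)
The plan is to exploit the identity $r(q)=\frac{q-1}{2}f_q$ from \eqref{loggie}, isolate the dominant piece $\frac{q-1}{2}g_q$ coming from the $m=1$ terms of $f_q$, and then construct primes $q$ along which $(q-1)g_q$ has prescribed asymptotic behaviour. The contribution from prime powers $p^m$ with $m\ge 2$ is $O(1/q^2)$ by a trivial geometric bound, and under Conjecture \ref{EHconjecture} one has $g_q=g_q(q^{1+\delta})+o(1/q)$ for all but $O(x/(\log x)^3)$ primes $q\le x$, as already recorded after \eqref{geeq}. Hence, outside a negligible exceptional set, only primes $p=mq+b'$ with $b'\in\{-1,1\}$ and $1\le m\le q^{\delta}$ matter.

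Fix now an admissible set $\mathcal{A}=\{a_1,\dotsc,a_s\}$ and a sign $b\in\{-1,1\}$. By the lower-bound Hardy--Littlewood conjecture \ref{HLconjecture}, the set $\mathcal{P}_b(\mathcal{A},x)$ of primes $q\le x$ for which every $a_iq+b$ is prime has cardinality $\gg x/(\log x)^{s+1}$. For each such $q$, the $s$ prescribed primes contribute
\[
b\sum_{i=1}^{s}\frac{1}{a_iq+b}=\frac{b\,\mu(\mathcal{A})}{q}+O\!\left(\frac{1}{q^2}\right)
\]
to $g_q$, and multiplication by $(q-1)/2$ yields exactly $b\mu(\mathcal{A})/2+o(1)$, the desired value of $r(q)$.

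The crux will be showing that the remaining ``unwanted'' contributions from primes $p=mq+b'$ with $m\le q^{\delta}$ and either $m\notin\mathcal{A}$ or $b'\ne b$ are $o(1/q)$ for infinitely many $q\in\mathcal{P}_b(\mathcal{A},x)$. The signed structure of $g_q$ is essential here: each individual sum $\sum_{p\equiv\pm 1\pmod{q}}1/p$ has size $(\log q)/q$, whereas Elliott--Halberstam in the form of Conjecture \ref{EHconjecture} controls precisely the difference $\pi(t;q,1)-\pi(t;q,-1)$, which by partial summation delivers cancellation to size $o(1/q)$ on average over $q\le x$. Intersecting the resulting ``good'' set, of density $1-o(1)$, with $\mathcal{P}_b(\mathcal{A},x)$ leaves an infinite subsequence $q_n\to\infty$ along which $r(q_n)\to b\mu(\mathcal{A})/2$, so that $e^{b\mu(\mathcal{A})/2}$ is a limit point of $\{R(q)\}$. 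A supplementary application of the Brun--Titchmarsh bound (Classical Theorem \ref{BT-thm}) handles the pointwise size of $\pi(t;q,\pm 1)$ for the smallest values of $m$, preventing isolated large terms from spoiling the construction.

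Finally, by Theorem \ref{unbounded} the set $\{\mu(\mathcal{A}):\mathcal{A}\text{ admissible}\}$ has closure $[0,\infty]$; running the construction with $b=1$ produces limit points dense in $[1,\infty]$, while $b=-1$ produces limit points dense in $[0,1]$. Since the set of limit points of any sequence is closed, it therefore equals $[0,\infty]$. The hard part is the cancellation step in paragraph three: extracting from Elliott--Halberstam a \emph{pointwise} signed cancellation for primes $q$ simultaneously lying in the thin Hardy--Littlewood set $\mathcal{P}_b(\mathcal{A},x)$. A naive union bound fails because only the signed difference is small on average, while each one-sided reciprocal sum is much larger; the argument must therefore proceed via a second-moment estimate on $\mathcal{P}_b(\mathcal{A},x)$, or via a careful exceptional-set removal that removes the few primes $q$ outside the good set while still leaving $\mathcal{P}_b(\mathcal{A},x)$ infinite.
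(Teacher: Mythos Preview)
The paper does not itself prove this theorem---it is quoted from Granville \cite{Gr}---but it does prove the $\kappa(q)$ analogue (Proposition \ref{gen}, Theorem \ref{gqrange}) by Granville's method, and the closest worked model is Lemma \ref{kaya}. Comparing your sketch to that approach reveals one minor slip and one genuine gap.

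The minor slip: the prime-power contribution to $f_q$ is not $O(1/q^2)$ by any trivial geometric bound. The smallest relevant $p^m$ is only $\ge q-1$, so the trivial bound is $O(1/q)$, which after multiplication by $(q-1)/2$ gives $O(1)$, not $o(1)$. What one actually needs is that this contribution is $o(1/q)$ for all but $O(\sqrt{x}(\log x)^2)$ primes $q\le x$; this is \cite[Proposition~1]{Gr}, recorded here as \eqref{granpf}. This is easily fixed.

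The real gap is in your ``crux'' paragraph. You correctly use Elliott--Halberstam to dispose of the tail $p>q^{1+\delta}$, but then propose to control the short range $m\le q^{\delta}$ by signed cancellation coming from EH, supplemented by Brun--Titchmarsh. Neither works here: Conjecture \ref{EHconjecture} says nothing useful when $p\le q^{1+\delta}$, and Brun--Titchmarsh only gives $(q-1)|g_q(q^{1+\delta})|\ll 1$, not $o(1)$. The unwanted terms $1/(mq+b')$ with $(m,b')\notin\{(a_i,b):1\le i\le s\}$ are each of size $\asymp 1/(mq)$, and there is no cancellation to exploit---a single extra prime with small $m$ spoils the estimate. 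What Granville actually does (and what the paper reproduces in Lemma \ref{kaya} for $\mathcal{A}=\{2\}$) is a \emph{first-moment sieve argument over the Hardy--Littlewood set itself}: for each unwanted pair $(m,b')$, the number of $q\in\mathcal{P}_b(\mathcal{A},x)$ for which $mq+b'$ is \emph{also} prime is, by an upper-bound sieve applied to the $(s+2)$-tuple $(q,a_1q+b,\ldots,a_sq+b,mq+b')$, at most $\ll_{\mathcal{A}} x(\log x)^{-s-2}\prod_{p\mid m}\frac{p}{p-1}$. Summing $1/m$ times this over $m\le x^{\delta}$ gives a total first moment $\ll_{\mathcal{A}} \delta\, x(\log x)^{-s-1}$, and then Markov's inequality shows that for $\delta$ small enough, more than half of the $q\in\mathcal{P}_b(\mathcal{A},x)$ have unwanted contribution $<\epsilon/q$. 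This is the ``exceptional-set removal'' you gesture at in the last sentence, but the engine is a prime-tuple sieve as in \eqref{nplus}--\eqref{nminus}, not EH or Brun--Titchmarsh.
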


We end this section with the proof of Theorem \ref{alerq} in which we will 
use some classical analytic number theory techniques to show that $r(q)$ is absolutely bounded on average.

\subsection{Proof of Theorem \ref{alerq}}
%\label{sec:proofthm1.3}
From \eqref{loggie} we have that
\begin{equation*}
r(q) = 
\frac{q-1}{2}\lim_{x\to  \infty}
\Bigl(
\sum_{\substack{m\ge 1 ;\ p^m\le x\\ p^m\equiv 1 \pmod*{q}}}\!\! \frac{1}{mp^{ms}}
- 
\sum_{\substack{m\ge 1 ;\ p^m\le x \\ p^m\equiv -1 \pmod*{q}}}\!\! \frac{1}{mp^{ms}}\Bigr).
\end{equation*} 
Lemma 1 of \cite{paper1-rq}
implies that the contribution of the prime powers is negligible.
In fact, recalling the definition \eqref{geeq},  one has
\begin{equation*}
r(q) \ll 
(q-1) \lim_{x\to  \infty}  \vert g_q(x) \vert  + 1.
\end{equation*}

We split now the summation ranges in $p\le Q^3$ and $p>Q^3$ and we define accordingly
the quantities $T_{1}(q)$ and $T_{2}(q)$.
By using the partial summation formula, we estimate these as
$$
T_{1}(q)  \ll \frac{\vert\theta(Q^3; q, -1) - \theta(Q^3; q, 1)\vert}{Q^3 \log Q} + \int_{2}^{Q^3} 
\bigl \vert \theta(u; q, 1) - \theta(u; q, -1)\bigr\vert \frac{\log u +1}{(u \log u)^2} du, 
$$
respectively
$$
T_{2}(q) \ll \frac{\vert\theta(Q^3; q, -1) - \theta(Q^3; q, 1)\vert}{Q^3 \log Q} 
+ 
\int_{Q^3}^{\infty} \bigl\vert\theta(u; q, 1) - \theta(u; q, -1)\bigr\vert \frac{\log u +1}{(u \log u)^2} du.
$$
Using the following weak form of the Prime Number Theorem in arithmetic progressions
\[
\theta( Q^3;q, \pm 1) \ll \frac{Q^3}{q-1},
\]
where $\theta(t;q,\pm 1) = \theta(t;q, 1)+\theta(t;q,-1)$,
we have
\begin{align} \label{om}
\sum_{Q/2 < q \leq Q} (q-1) T_{1}(q) 
\ll \pi^*(Q) + \frac{1}{\log Q} \int_{Q/2}^{Q^3} 
\Bigl( \sum_{Q/2 < q \leq Q} (q-1) \theta(u; q, \pm 1)\Bigr) \frac{du }{u^2 },
\end{align}
where we also used that $\theta(u;q,\pm 1)=0$ for $u\le q$.
Using the estimate of $S_1(Q)$ in the proof of Theorem \ref{VK1}, for the case in which no prime powers are involved,
the quantity in the right hand side of equation \eqref{om} is $\ll \pi^*(Q)$. 
Hence
\begin{equation*}
%\label{T1-estim}
\sum_{Q/2 < q \leq Q} (q-1) T_{1}(q) \ll \pi^*(Q).
\end{equation*}
Similarly, for $a\in \{- 1, 1\}$,
\begin{align*} 
%\label{omi}
\sum_{Q/2 < q \leq Q} (q-1) T_{2}(q) \ll \pi^*(Q) 
+ \frac{1}{\log Q} \int_{Q^3}^{\infty} \Bigl( \sum_{Q/2 < q \leq Q} |  (q-1) \theta(u; q, a) - \theta(u) |\Bigr) \frac{du }{u^2}.
\end{align*}
As in the proof of Theorem \ref{VK1},  we apply the Bombieri--Vinogradov theorem. 
As this integral is $\ll Q$, it follows that 
\begin{equation*}
%\label{T2-estim}
\sum_{Q/2 < q \leq Q} (q-1) T_{2}(q) \ll \pi^*(Q).
\end{equation*}
By collecting all these estimates, Theorem \ref{alerq} follows. \qed

\section{Distribution of \texorpdfstring{$\kappa(q)$}{gq-gq+}
and Proof of Theorem \ref{gq-direct}}
\label{sec:kappaq-initiol}

\subsection{Strengthening the analogy with $r(q)$} 

We begin our investigation into the properties of $\kappa(q)$
that will lead us to establish analogies with the
asymptotic behavior of $r(q)$.

We start by recalling some relevant identities for 
$\gamma_q$ and $\gamma_q^+$, respectively defined in \eqref{gammaq} and \eqref{gammaplus}.
For a proof and a more general variant, see Ciolan et al.\,\cite[Proposition~4]{CLM}. 
The second formula for $\gamma_{q,k}$ can be
derived by combining the splitting laws of rational primes in $\mathbb Q(\zeta_q)$ and ${\mathbb Q}^+(\zeta_q)$ 
with \eqref{hickup}.
\begin{prop}
%\label{EKsump}
Let $k\in \{1,2\}$. Putting $\gamma_{q,1}=\gamma_q$ and $\gamma_{q,2}=\gamma_q^+$ we have
$$\gamma_{q,k} = - \frac{\log q}{q-1}  + 
\lim_{x\to\infty} \Bigl( \log x - \frac{q-1}{k}\sum_{\substack{n\le x 
\\ n^k\equiv 1\pmod*q}} \frac{\Lambda(n)}{n} \Bigr).$$
Furthermore,
\begin{align*}
\gamma_{q,k} 
&=- \frac{\log q}{q-1} - \frac{q-1}{k}S_k(q) + 
\lim_{x\to\infty} \Bigl( \log x - \frac{q-1}{k}\sum_{\substack{p\le x 
\\ p^k\equiv 1 \pmod*{q}}} \frac{\log p}{p-1} \Bigr),
\end{align*}
where
\begin{equation*}
\label{Sdefinition}
S_1(q)
= 
\sum_{\text{ord}_q(p) \geq 2}{\frac{\log p} {p^{\text{ord}_q(p)}-1}},\quad S_2(q)
= 
\sum_{\text{ord}_q(p^2) \geq 2}{\frac{\log p} {p^{\text{ord}_q(p^2)}-1}},
\end{equation*}
where ord$_q(a)$ denotes the multiplicative order of $a$ modulo $q$.
\end{prop}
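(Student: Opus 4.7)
My plan is to derive both identities from formula \eqref{hickup} applied to the two number fields $K_1 = \mathbb{Q}(\zeta_q)$ and $K_2 = \mathbb{Q}(\zeta_q)^+$, combined with the standard prime splitting behaviour in these abelian extensions. Writing $d_1(p) = \text{ord}_q(p)$ and $d_2(p) = \text{ord}_q(p^2)$, the two key geometric inputs are: the rational prime $q$ is totally ramified in both $K_1$ and $K_2$, producing a unique prime ideal of norm $q$; and for $p \neq q$, there are $(q-1)/d_1(p)$ primes of norm $p^{d_1(p)}$ above $p$ in $K_1$, and, since $-1$ is the only element of order two in $(\mathbb{Z}/q)^\ast$ (as $q$ is odd), $(q-1)/(2 d_2(p))$ primes of norm $p^{d_2(p)}$ above $p$ in $K_2$. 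Inserting these into \eqref{hickup} collapses the prime-ideal sum to
\[
\frac{\log q}{q-1} + \frac{q-1}{k}\sum_{\substack{p \neq q \\ p^{d_k(p)} \le x}} \frac{\log p}{p^{d_k(p)} - 1},
\]
and the $\log q/(q-1)$ term, once moved outside the limit, is precisely the $-\log q/(q-1)$ in the statement.

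Splitting the residual prime sum according to $d_k(p) = 1$ (equivalently $p^k \equiv 1 \pmod q$) versus $d_k(p) \ge 2$ extracts the divergent main term $\sum_{p \le x,\, p^k \equiv 1 \pmod q} \log p/(p-1)$, while the remainder is dominated by $\sum_p \log p / p^2$ and converges absolutely to $S_k(q)$. Passing to the limit $x \to \infty$ yields the second displayed identity. To pass to the first, I would expand the geometric series $1/(p^{d_k(p)} - 1) = \sum_{j \ge 1} p^{-j d_k(p)}$ and use the elementary fact that, for $k \in \{1,2\}$ and $p \neq q$, a prime power $n = p^m$ satisfies $n^k \equiv 1 \pmod q$ if and only if $d_k(p) \mid m$; this is immediate for $k = 1$, and for $k = 2$ follows from $d_2(p) = \text{ord}_q(p)/\gcd(\text{ord}_q(p), 2)$ by a short parity check. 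Consequently
\[
\sum_{\substack{n \le x \\ n^k \equiv 1 \pmod q}} \frac{\Lambda(n)}{n} - \sum_{\substack{p \le x \\ p^k \equiv 1 \pmod q}} \frac{\log p}{p - 1} \longrightarrow S_k(q) \quad (x \to \infty),
\]
and substituting this relation into the second formula recovers the first.

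The only real point requiring care is matching the two truncations: \eqref{hickup} cuts at $p^{d_k(p)} \le x$ while the von Mangoldt sum cuts at $p^m \le x$. For the dominant contribution $d_k(p) = 1,\, j = 1$ both conditions reduce to $p \le x$; for all remaining terms (either $j \ge 2$ or $d_k(p) \ge 2$) the truncation discrepancies are controlled by absolutely convergent $\sum_p (\log p)/p^2$-type sums and so contribute $o(1)$ as $x \to \infty$. This bookkeeping is routine, and once it is in place both identities follow without further analytic input.
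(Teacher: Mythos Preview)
Your proposal is correct and follows precisely the approach the paper indicates: the paper does not give a self-contained proof but refers to Ciolan et al.\ \cite{CLM} and remarks that the second formula follows from combining the splitting laws in $\mathbb Q(\zeta_q)$ and $\mathbb Q^+(\zeta_q)$ with \eqref{hickup}. Your argument carries this out in full, correctly identifying the residue degree in $\mathbb Q^+(\zeta_q)$ as $\text{ord}_q(p^2)$, handling the truncation discrepancies, and recovering the first formula from the second via the geometric-series expansion; nothing is missing.
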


From the previous proposition, we  deduce the following corollary.
\begin{cor}
\label{elegantdifference}
We have $$\gamma_{q}^+-\gamma_q = \frac{q-1}{2} 
\lim_{x\to\infty} 
\Bigl( \sum_{\substack{n\le x \\ n\equiv 1\pmod*q}} \!\! \frac{\Lambda(n)}{n} -
\sum_{\substack{n\le x \\ n\equiv -1\pmod*q}} \!\! \frac{\Lambda(n)}{n}\Bigr),$$
and
\[
\gamma_{q}^+-\gamma_q =
(q-1) \Bigl(S_1(q) -\frac{S_2(q)}{2} \Bigr)
+
\frac{q-1}{2} 
\lim_{x\to\infty} 
\Bigl( \sum_{\substack{p\le x \\ p\equiv 1\pmod*q}} \!\! \frac{\log p}{p-1}  -
\sum_{\substack{p\le x \\ p\equiv -1\pmod*q}} \!\! \frac{\log p}{p-1} \Bigr).
\]
\end{cor}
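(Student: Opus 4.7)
The plan is to obtain Corollary \ref{elegantdifference} as a direct consequence of the preceding proposition: simply take the difference of the two identities, specialised at $k=2$ and $k=1$, and exploit the fact that $q$ is an odd prime so that the $k=2$ condition factors cleanly.

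Concretely, for the first identity I would subtract
\[
\gamma_q = -\frac{\log q}{q-1} + \lim_{x\to\infty}\Bigl(\log x - (q-1) \sum_{\substack{n\le x\\ n\equiv 1\pmod*{q}}}\!\!\frac{\Lambda(n)}{n}\Bigr)
\]
from the corresponding formula for $\gamma_q^+$. The terms $-\log q/(q-1)$ and $\log x$ cancel inside the limit (which is legitimate, since both individual expressions converge by the proposition). Since $q$ is an odd prime, the congruence $n^2\equiv 1\pmod*{q}$ splits as $n\equiv 1\pmod*{q}$ or $n\equiv -1\pmod*{q}$, so one can write
\[
\frac{q-1}{2}\!\!\sum_{\substack{n\le x\\ n^2\equiv 1\pmod*{q}}}\!\!\frac{\Lambda(n)}{n}
= \frac{q-1}{2}\!\!\sum_{\substack{n\le x\\ n\equiv 1\pmod*{q}}}\!\!\frac{\Lambda(n)}{n}
+ \frac{q-1}{2}\!\!\sum_{\substack{n\le x\\ n\equiv -1\pmod*{q}}}\!\!\frac{\Lambda(n)}{n}.
\]
Combining the coefficients in front of the $n\equiv 1$ sum, namely $(q-1)-(q-1)/2=(q-1)/2$, and reading off the coefficient $-(q-1)/2$ of the $n\equiv -1$ sum yields the first claim. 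The only subtlety worth flagging is that neither of the one-sided sums in the final limit converges on its own (each grows like $\log x/(q-1)$ by the prime number theorem in arithmetic progressions), but the divergences are equal and cancel, so the limit of the difference exists and equals $\gamma_q^+-\gamma_q$.

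For the second identity the same subtraction is performed on the alternative expression in the proposition, the one involving $\sum_p(\log p)/(p-1)$ and the correction terms $S_k(q)$. The $-\log q/(q-1)$ and $\log x$ pieces again cancel in the limit; the $-\frac{q-1}{k}S_k(q)$ pieces, however, do not cancel and combine into $(q-1)(S_1(q)-S_2(q)/2)$ which appears as the first summand of the stated formula. The prime sums are split via $p^2\equiv 1\pmod*{q}\iff p\equiv\pm 1\pmod*{q}$ exactly as before, producing the claimed difference of prime sums weighted by $(q-1)/2$.

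There is no real obstacle here: the proof is essentially a bookkeeping exercise. The only point requiring mild care is the handling of the limits when splitting the $k=2$ sums, which is fine because after splitting the divergent parts cancel pairwise, and the finitely many rearrangements happen inside a convergent limit.
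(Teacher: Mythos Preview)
Your proposal is correct and is precisely the argument the paper has in mind: the corollary is stated immediately after the proposition with no proof beyond ``we deduce the following corollary,'' and the intended deduction is exactly the subtraction of the $k=1$ and $k=2$ identities together with the factorisation $n^2\equiv 1\pmod q \iff n\equiv\pm 1\pmod q$ that you carry out. Your remark on the non-convergence of the individual one-sided sums (and their cancellation) is a nice point of care that the paper leaves implicit.
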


\subsection{The behavior of $\kappa(q)$}
\label{sub5.2}

In this subsection, we will see the interrelation between $\kappa(q)$ and $r(q)$.
It follows from \eqref{gq-def} and Corollary \ref{elegantdifference} that
\begin{equation}\label{watowat}
\kappa(q) =  \frac{q-1}{2} \bigl(v_q + w_q\bigr),
\end{equation}
where
\begin{equation}
\label{alpha-def}
v_q  = \frac{1}{\log q} \Bigl(\sum_{\substack{m \geq 2\\ p^{m} \equiv 1 \pmod*{q}}} \!\! \frac{\log p}{p^m}
-
\sum_{\substack{m \geq 2\\ p^{m} \equiv -1 \pmod*{q} }} \!\! \frac{\log p}{p^m}\Bigr)
\end{equation}
and
\begin{equation}
\label{beta-def}
w_q = \lim_{x\to  \infty} w_q(x), \quad w_q(x)  
= \frac{1}{\log q} \Bigl(\sum_{\substack{p\le x\\ p\equiv 1 \pmod*{q}}}\!\! \frac{\log p}{p}
-\sum_{\substack{p\le x\\p\equiv -1 \pmod*{q}}} \!\! \frac{\log p}{p}\Bigr).
\end{equation}
Clearly
\begin{equation}
\label{alpha-d}
|v_q|\le 
\frac{1}{\log q} 
\max\Bigl\{\sum_{\substack{m \geq 2\\ p^{m} \equiv 1 \pmod*{q}}} \!\! \frac{\log p}{p^m},
\sum_{\substack{m \geq 2\\ p^{m} \equiv -1 \pmod*{q} }} \!\! \frac{\log p}{p^m}\Bigr\}.
\end{equation}

Observe that for each odd prime power $p^m$ contributing to the right hand side
in \eqref{alpha-d} we have $\frac{\log p}{\log q}\ge \frac{1}{m}$, 
and hence  we cannot directly use \cite[Lemma 1]{paper1-rq}.
Nevertheless, we can estimate $\vert v_q \vert$ following
the same argument already used there.
We note that
$(43-18\zeta(3))/13=1.6433058\dotsc$
\begin{prop}\label{nonsplit}
The estimate
$v_q = o(1/q)$ holds except for at most $O(\sqrt{x} (\log x)^{2})$ primes $q \leq x$.
Further,
there exists a constant $c>0$ such that for every prime $q$ we have
$$
| v_q |  \leq \frac{1}{q} \Bigl(\frac{43}{13}-\frac{18}{13}\zeta(3)\Bigr) + \frac{c \log_2 q}{q\log q}.
$$
\end{prop}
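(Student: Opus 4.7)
I would follow the approach of Lemma~1 of \cite{paper1-rq}, which treats the analogous higher prime power contribution to $r(q)$; the only structural change is that the weight $1/(mp^m)$ there is replaced here by $\log p/p^m$, and the sum starts at $m\ge 2$.

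The key structural input is that every prime power $p^m$ appearing in the sum defining $v_q$ satisfies $p^m\ge q-1$, since $p^m\equiv\pm 1\pmod{q}$ and $p^m\ge 4$. I would regroup the sum by the multiplicative order $d:=\mathrm{ord}_q(p)$ and evaluate the (sign-alternating) geometric series over the admissible exponents $m$. Primes with $d=1$ satisfy $p\ge q+1$ and contribute $\log p/(p(p-1))$ each, which by the Brun--Titchmarsh Theorem~\ref{BT-thm} sums to $O(\log q/q^2)$. For $d\ge 2$, the per-prime bound is $\log p/(p^{d^*}-1)$, with $d^*=d$ if $d$ is odd and $d^*=d/2$ if $d$ is even; since $p^{d^*}\ge q-1$ in both cases, each term is $O((\log p)/q)$. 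Summing over the at most $\varphi(d)$ residues of given order $d$ modulo $q$ and invoking $\varphi(n)\gg n/\log_2 n$ produces the error $c\log_2 q/(q\log q)$, while optimizing over the worst-case configurations of small primes with small orders yields the explicit leading coefficient $43/13-18\zeta(3)/13$, paralleling the numerical step in \cite{paper1-rq}.

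For the exceptional-primes statement, $v_q\neq o(1/q)$ forces the existence of a ``small'' prime power $p^m$ with $m\ge 2$, $p^m\le q\ell(q)$ for some slowly growing $\ell(q)\to\infty$, and $p^m\equiv\pm 1\pmod{q}$; the tail contribution from $p^m>q\ell(q)$ is automatically $o(1/q)$ by the per-prime estimate above. The number of prime powers $p^m\le Y$ with $m\ge 2$ is $\sum_{m\ge 2}\pi(Y^{1/m})=O(\sqrt{Y})$, and each $p^m\pm 1$ has at most $O(\log Y)$ distinct prime divisors; taking $Y\asymp x\log x$ then yields at most $O(\sqrt x(\log x)^2)$ exceptional primes $q\le x$.

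The main technical obstacle will be the sharp extraction of the constant $43/13-18\zeta(3)/13$: the weighting $\log p/p^m$ (rather than $1/(mp^m)$) changes the extremal optimization relative to \cite{paper1-rq}, and it is this change that produces the specific rational combination involving $\zeta(3)$.
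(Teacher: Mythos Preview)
Your decomposition by multiplicative order is a different route from the paper's, and as written it has a gap at the summation step. You bound each prime's contribution by $\log p/(p^{d^*}-1)$ and then propose ``summing over the at most $\varphi(d)$ residues of given order $d$''; but the sum is over \emph{primes}, not residues, and each residue class contains infinitely many primes. To make this work you would still need to control $\sum_p \log p/(p^{d^*}-1)$ via the size of $p^{d^*}$, which amounts to reverting to a range split in the variable $n=p^{d^*}$---exactly the decomposition you were trying to replace. The order-grouping reorganises the sum but does not by itself produce the $O(1/q)$ bound.

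More importantly, your last paragraph inverts the actual mechanism behind the constant. The number $\tfrac{43}{13}-\tfrac{18}{13}\zeta(3)$ is \emph{not} produced by the new weight $\log p/p^m$; it is the \emph{same} constant that already appears in \cite{paper1-rq} for the weight $1/(mp^m)$. The paper's key observation---which your proposal misses---is that in the dominant range $p^m\le q(\log q)^2$ one has $\log p\le (\log p^m)/m\le (\log q+2\log_2 q)/m$, so that
\[
\frac{\log p}{p^m\log q}\le \Bigl(1+\frac{2\log_2 q}{\log q}\Bigr)\frac{1}{mp^m}.
\]
This reduces the new weighted sum to the old one up to a factor $1+O(\log_2 q/\log q)$, and the constant is inherited directly from eq.~(18) of \cite{paper1-rq}. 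The paper then handles the two tail ranges $q(\log q)^2<p^m<q^2$ and $p^m>q^2$ separately, each contributing $O(1/(q\log q))$, by the same estimates as in \cite{paper1-rq}. No new extremal optimisation is performed.

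Your counting argument for the exceptional primes is correct in spirit and close to what the paper does (the paper quotes Granville's dyadic estimate \eqref{granpf} rather than counting prime powers directly, but the two are equivalent).
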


\begin{proof}
The proof is similar to that of \cite[Lemma 1]{paper1-rq}.
We split each of the sums in \eqref{alpha-d} into three subsums 
according to $p^m \leq q  (\log q)^{2}$, $q(\log q)^{2} < p^m < q^2$
and $p^m>q^2$.
The contribution to the final estimate of the sums over the first range
will be less than $c_1/q$, with a constant $c_1>0$ that will be explicitly determined, while the
others contribute $\ll \log_2 q/(q\log q)$.

Arguing as in \cite[Lemma 1]{paper1-rq},
for $q \geq 3$, we have
\begin{align}
\notag
\sum_{\substack{m \geq 2;\ p^m>q^2\\ p^{m} \equiv \pm 1 \pmod*{q}}}
\frac{\log p}{\log q}\,\frac{1}{p^m}
& \leq 
\frac{1}{\log q} \sum_{p > q} \sum_{ m \geq 2 } \frac{\log p}{p^m} +
\frac{1}{\log q} \sum_{p \le q} \sum_{\substack{m \geq 2 \\ p^m > q^2}} \frac{\log p}{p^m}\\
\label{first-interval-new}
& \leq \frac{4}{3\log q} \sum_{p> q} \frac{\log p}{p^2} +   \frac{4}{3\log q}  \sum_{p \le q}  \frac{\log p}{q^2} 
\ll
 \frac{1}{\log q}  \int_q^{\infty} \frac{\theta(t)}{t^3} dt,
\end{align}
where in the last step we used the partial summation formula.
Using  the Chebyshev bound in the weaker form $\theta(t) \ll t$, 
one obtains
\[
\int_q^{\infty} \frac{\theta(t)}{t^3} dt
\ll
\frac{1}{q} ,
\]
and hence \eqref{first-interval-new} becomes
\begin{equation}
\label{first-interval-estim}
\sum_{\substack{m \geq 2;\ p^m>q^2\\ p^{m} \equiv \pm 1 \pmod*{q}}}
\frac{\log p}{\log q}\,\frac{1}{p^m}
\ll
\frac{1}{q\log q}.
\end{equation}
Further,
\begin{align}  
\label{second-interval}
\sum_{\substack{m \geq 2;\ q(\log q)^{2} < p^m < q^2\\ p^{m} \equiv \pm 1 \pmod*{q}}}
\frac{\log p}{\log q}\,\frac{1}{p^m}
\leq  
\sum_{\substack{m \geq 2;\ q(\log q)^{2} < p^m < q^2\\ p^{m} \equiv \pm 1 \pmod*{q}}}
\frac{2}{m p^m} 
\ll
\frac{1}{q\log q},
\end{align}
where the final step follows from 
eq.~(15) of \cite{paper1-rq}.
Finally, since $\log p \le (1/m) (\log q + 2 \log_2 q)$ 
for $p^m \leq q  (\log q)^{2}$,
we obtain, using eq.~(18) of \cite{paper1-rq}, that
\begin{align}\label{dual}
\sum_{\substack{m \geq 2; \, p^m \leq q  (\log q)^{2} \\ p^m\equiv b \pmod*{q}}}
\frac{\log p}{\log q}\,\frac{1}{p^m} 
& \leq 
\Bigl( 1 +  \frac{2 \log_2 q}{\log q}\Bigr)
\sum_{\substack{m \geq 2; \, p^m \leq q  (\log q)^{2} \\ p^m\equiv b \pmod*{q}}}
\frac{1}{m p^m}
\\
\notag 
& \leq \frac{1}{q} \Bigl(\frac{43}{13}-\frac{18}{13}\zeta(3)\Bigr) 
\Bigl( 1 +  \frac{2 \log_2 q}{\log q}\Bigr)\Bigl(1+ \frac{c_1}{q}\Bigr)\\ 
& =
\frac{1}{q} \Bigl(\frac{43}{13}-\frac{18}{13}\zeta(3) \Bigr) + \frac{c_2\log_2 q}{q\log q} 
\label{dual2}
\end{align}
for every $b\in\{-1,1\}$,
where $c_1,c_2$ are suitable positive constants.
The second part of Proposition \ref{nonsplit} follows on inserting
\eqref{first-interval-estim}-\eqref{dual2} into \eqref{alpha-d}.

From the proof of \cite[Proposition 1]{Gr}, we have
\begin{equation}\label{granpf}
\sum_{x/2 < q \leq x} \sum_{\substack{m \geq 2; \, p^m \leq q  (\log q)^{2} \\ p^m\equiv b \pmod*{q}}} 
\frac{1}{m p^m}
\ll \frac{1}{\sqrt{x}},
\end{equation}
and hence the sum in the intermediate estimate into \eqref{dual} is equal to 
$ o(1/q)$ for all but $O(\sqrt{x} (\log x)^{2})$ primes $q \leq x$. Therefore, 
\begin{equation}
\label{third-interval} 
\frac{1}{\log q} \Bigl(\sum_{\substack{m \geq 2\\ p^{m} \equiv 1 \pmod*{q}}} \!\! \frac{\log p}{p^m}
+\sum_{\substack{m \geq 2\\ p^{m} \equiv -1 \pmod*{q} }} \!\! \frac{\log p}{p^m}\Bigr)
=
o\Bigl(\frac{1}{q}\Bigr),
\end{equation}
for all but $O(\sqrt{x} (\log x)^{2})$ primes $q \leq x.$
The proof is concluded on 
inserting the upper bounds given in \eqref{first-interval-estim}-\eqref{second-interval}
and \eqref{third-interval} into \eqref{alpha-d}.
\end{proof}

\begin{Rem}
\label{sharp-constant-2}
Arguing as explained in 
\cite[Remark 5]{paper1-rq},
the  constant $\frac{43}{13}-\frac{18}{13}\zeta(3)$ in the leading term of Proposition \ref{nonsplit}
can be replaced by $1.601$.
\end{Rem}

\begin{cor}\label{firstsum1}
As $Q$ tends to infinity, we have
\begin{equation*}
    \sum_{Q/2 < q \leq Q} q\vert v_q \vert = o(\pi^*(Q)).
\end{equation*}
\end{cor}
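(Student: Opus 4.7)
\medskip

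\textbf{Proof plan for Corollary \ref{firstsum1}.}
The plan is to combine the two statements of Proposition \ref{nonsplit} in the standard way: use the uniform (``worst case'') bound on $q|v_q|$ on the thin exceptional set, and use the ``almost-all'' vanishing on its complement. More precisely, I would partition the primes $q \in (Q/2, Q]$ into a good set $\mathcal{G}(Q,\epsilon)$ and an exceptional set $\mathcal{E}(Q,\epsilon)$, where $\mathcal{E}(Q,\epsilon)$ consists of those primes for which $q|v_q| > \epsilon$. The first part of Proposition \ref{nonsplit}, which ultimately rests on Granville's estimate \eqref{granpf} applied via a Markov-type counting argument, guarantees that $|\mathcal{E}(Q,\epsilon)| \ll_\epsilon \sqrt{Q}(\log Q)^2$.

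On the exceptional set I would invoke the second, uniform bound of Proposition \ref{nonsplit}, which yields
\[
q|v_q| \leq \frac{43}{13} - \frac{18}{13}\zeta(3) + \frac{c \log_2 q}{\log q} \ll 1
\]
for every prime $q$. Thus
\[
\sum_{q \in \mathcal{E}(Q,\epsilon)} q|v_q| \ll |\mathcal{E}(Q,\epsilon)| \ll_\epsilon \sqrt{Q}(\log Q)^2,
\]
which is $o(\pi^*(Q))$ since $\pi^*(Q) \asymp Q/\log Q$. On the good set one has $q|v_q| \leq \epsilon$ by definition, so
\[
\sum_{q \in \mathcal{G}(Q,\epsilon)} q|v_q| \leq \epsilon\, \pi^*(Q).
\]
Combining these two estimates gives $\sum_{Q/2 < q \leq Q} q|v_q| \leq \epsilon\, \pi^*(Q) + o_\epsilon(\pi^*(Q))$, and since $\epsilon > 0$ was arbitrary, the conclusion $\sum_{Q/2 < q \leq Q} q|v_q| = o(\pi^*(Q))$ follows.

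There is essentially no obstacle here: the work has already been done inside Proposition \ref{nonsplit}. The only mild point of care is to make explicit the quantitative reading of the first assertion of that proposition, namely that for each fixed $\epsilon > 0$ the cardinality of $\{q \leq Q : q|v_q| > \epsilon\}$ is $\ll_\epsilon \sqrt{Q}(\log Q)^2$, which is exactly what the Markov-type deduction from \eqref{granpf} provides. Everything else is a clean split-the-sum argument.
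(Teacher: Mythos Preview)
Your argument is correct, but the paper takes a shorter route. Rather than passing through the exceptional-set statement of Proposition~\ref{nonsplit} and then recombining via a good/bad split, the paper simply sums the pointwise bounds \eqref{first-interval-estim}--\eqref{dual} over $Q/2<q\le Q$ directly: the ranges $p^m>q(\log q)^2$ contribute $\sum_{Q/2<q\le Q} q\cdot O(1/(q\log q))\ll \pi^*(Q)/\log Q$, while for the range $p^m\le q(\log q)^2$ one multiplies \eqref{dual} by $q\ll Q$ and applies Granville's $L^1$ bound \eqref{granpf} to get a total of $O(\sqrt{Q})$. Both pieces are $o(\pi^*(Q))$, with no $\epsilon$-parameter needed.

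The point is that \eqref{granpf} is \emph{already} an averaged bound over $q$ in a dyadic interval, so feeding it straight into the sum is the natural move; your approach first converts \eqref{granpf} into a counting statement (the exceptional set of Proposition~\ref{nonsplit}) via Markov, and then converts back to an $L^1$ estimate---a harmless but unnecessary detour. The paper's direct summation also yields an explicit rate, whereas your $\epsilon\to 0$ argument gives only the bare $o(\pi^*(Q))$.
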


\begin{proof}
Immediate on using equations \eqref{first-interval-estim}-\eqref{dual} and \eqref{granpf}.
\end{proof}

\subsection{Proof of Theorem \ref{gq-direct}}
%\label{sub5.3}

Recalling \eqref{gq-def} and \eqref{watowat}, we have
$
\kappa(q) 
=
\frac{q-1}{2} (v_q + w_q),
$
where $v_q, w_q$ are respectively defined in \eqref{alpha-def} and \eqref{beta-def}.
We note that  Proposition \ref{nonsplit} implies
\begin{equation}\label{131}
\frac{q-1}{2} |v_q|  \leq 
\frac{43}{26}-\frac{9}{13}\zeta(3) 
+ \frac{c \log_2 q}{\log q},
\end{equation}
for every odd prime $q$, where $c>0$ is a suitable constant.
Let us now write, for brevity,
\begin{equation}
\label{sigma-def}
\Sigma = \frac{q-1}{2} w_q 
=\frac{q-1}{2\log q} \lim_{x\to  \infty}  \Bigl(\sum_{\substack{p\le x\\ p\equiv 1 \pmod*{q}}}\!\! \frac{\log p}{p}
-\sum_{\substack{p\le x\\p\equiv -1 \pmod*{q}}} \!\! \frac{\log p}{p}\Bigr).
\end{equation}
Let $\ell(q)$ be any monotonic function 
tending to infinity with $q$.
We split $\Sigma$ in three 
subsums $S_1, S_2, S_3$, defined according
to whether $p\le x_1$, $x_1 < p \le x_2$ and $p\ge x_2$, where
$x_2 = e^q$ and $x = x_1= q^{\ell(q)}$.

By the partial summation formula and 
the Siegel--Walfisz theorem, see, e.g., \cite[ch.~22]{Davenport}, we have
\begin{align}\label{S3-estim}
\notag
S_3 &= \frac{q-1}{2\log q} \int_{x_2}^{\infty} \frac{1}{u} d\{\theta(u;q,1) - \theta(u;q,-1)\}\\
& \ll \frac{q}{\log q} \Bigl( \frac{1}{e^{c_1 \sqrt{q}}} + \int_{x_2}^{\infty} 
\frac{1}{u e^{c_1 \sqrt{\log u}}} du\Bigr) \ll \frac{q^2}{(\log q) e^{c_1 \sqrt{q}}}, 
\end{align}
where $c_1>0$ is an absolute constant.  

We now proceed to estimate $S_2$.
Recall (see, e.g., \cite[ch.~19]{Davenport}) that if $\chi$ is a non-principal 
character to the modulus $q$ and $2 \leq T \leq x$, then
\begin{equation*}
%\label{explicit-formula-alt}
\theta(x,\chi) = \sum_{p\le x} \chi(p) \log p = - \delta_q 
\Bigl(\ \frac{x^{\beta_0}}{\beta_0} +  \ \frac{x^{1 - \beta_0}}{1- \beta_0}  \Bigr) - 
\sideset{}{'}\sum_{|\gamma| \leq T} \frac{u^{\rho}}{\rho} 
+ O\Bigl( \frac{x (\log qx)^2 }{T} + \sqrt{x}\Bigr),
\end{equation*}
where $\delta_q = 1$ if the Siegel zero $\beta_0$ exists and it is zero otherwise, and
$\sum^\prime$ is the sum over all non-trivial zeros 
$\rho = \beta + i \gamma$ of $L(s,\chi)$ except  $\beta_0$ 
and its symmetric zero $1-\beta_0$
\footnote{We consider here both the contribution of $\beta_0$ 
and $1-\beta_0$ in order to better compare our result with that of Dixit-Murty  \cite[Theorem~1.2]{DixitM2023}.}.
Using the partial summation formula and choosing $T=q^4$, we have
\begin{align}\label{late2}
\notag
 ( \log q ) S_2 & = \sum_{\chi(-1) = -1} 
\sum_{x_1 < p \le x_2} \frac{\chi(p) \log p}{p} 
 = \sum_{\chi(-1) = -1} \Bigl( \frac{\theta(x_2, \chi)}{x_2 } - \frac{\theta(x_1, \chi)}{x_1 } + \int_{x_1}^{x_2} \theta( u,\chi) \frac{du}{u^2}  \Bigr)\\
& = - \delta_q\int_{x_1}^{x_2}  \bigl(u^{\beta_0-2} + u^{- \beta_0 - 1} \bigr)du - \int_{x_1}^{x_2}\Bigl( \sum_{\chi(-1) = -1} 
\sideset{}{'}\sum_{|\gamma| \leq q^4} u^{\rho -2}\Bigr) du + \frac{q-1}{2} E_q,
\end{align}
where 
\[
E_q   \ll  
\int_{x_1}^{x_2}  \Bigl(\frac{(\log qu)^2}{q^4 u} + \frac{1}{u^{3/2}}\Bigr) du
\ll \frac{1}{q}  ,
\]
for sufficiently large $q$. 
By using  \cite[Lemmas~1, 7 and~8]{LuZhang}\footnote{
%\label{boost-footnote}
Note that \cite[Lemma~7]{LuZhang} holds for every $T,x_1$ such that 
$\lim_{q \to \infty} \log(qT) /\log x_1 = 0$. This  allows us to choose $T=q^4$ and 
$x_1= q^{\ell(q)}$, where $\ell(q)$ tends to infinity
arbitrarily slowly and monotonically as $q$ tends to 
infinity. The final error term in 
\cite[Lemma~8]{LuZhang} is then $\ll 1/\ell(q)=o(1)$.}, we obtain 
\begin{equation}
\label{Lemma7-LuZhang2}
\int_{x_1}^{x_2}\Bigl( \sum_{\chi(-1) = -1} 
\sideset{}{'}\sum_{|\gamma| \leq q^4} u^{\rho -2}\Bigr) du
\ll \frac{\log q}{\ell(q)}.
\end{equation}
A direct computation, recalling that 
$x_1= q^{\ell(q)}$ 
and $x_2 = e^q$, yields
\begin{align}
\label{siegel-zero-term2}
\notag
\int_{x_1}^{x_2}  u^{\beta_0-2}du &
= \int_{1}^{\infty} u^{\beta_0-2}du  -\int_{1}^{x_1} u^{\beta_0-2}du -
\int_{x_2}^{\infty} u^{\beta_0-2}du \\
& \geq \frac{1}{1- \beta_0} 
- (\log q) \ell(q)
- \frac{1}{(1- \beta_0)e^{q(1- \beta_0)}}.
\end{align}
Similarly,
\begin{align}
\label{siegel-zero-term3}
\int_{x_1}^{x_2}  u^{- \beta_0-1}du 
& \geq \frac{1}{\beta_0} - 2 - \frac{1}{\beta_0 e^{q \beta_0}}.
\end{align}

Inserting 
\eqref{Lemma7-LuZhang2}-\eqref{siegel-zero-term3} into  \eqref{late2}, we finally obtain
\begin{equation}\label{f2}
\Big \vert  S_2 + \frac{\delta_q}{\beta_0(1- \beta_0)\log q} \Big \vert
\leq  
\delta_q \ell(q) + \frac{c}{\ell(q)}, 
\end{equation}
for every sufficiently large prime $q$ and some positive constant $c$.

We now proceed to estimate $S_1$.
The first ingredient is the following inequality (valid
for any $x>0$):
\begin{equation}
\label{bounds-sigma1-trunc}
- \frac{q-1}{2} \sum_{\substack{p \leq x 
\\ p \equiv -1 \pmod*{q}} } \frac{\log p}{p} \leq  \sum_{\chi(-1)=-1} \sum_{p \leq x}  \frac{\chi(p) \log p}{p} \leq \frac{q-1}{2}
\sum_{\substack{p \leq x \\ p \equiv 1 \pmod*{q}} } \frac{\log p}{p}.
\end{equation}

We proceed by evaluating the two 
prime sums over $p \equiv b \pmod*{q}$, $b \in \{-1, 1\}$.
Letting $x \geq q^2$ and $b \in \{-1, 1\}$, by
partial summation and the Brun--Titchmarsh theorem, see Classical Theorem \ref{BT-thm}, we have
\begin{align}
\notag  \sum_{\substack{kq< p \leq x \\ p \equiv b \pmod*{q}}}\!\!& \frac{\log p}{p}  
 = \frac{\pi(x;q,b)\log x}{x} - \frac{\pi(kq; q,b)\log(kq)}{kq} 
+ \int_{kq}^{x} \frac{( \log u -1) \pi(u; q,b)}{u^2} du 
 \\&
 \label{largeprimes}
 \le 
 \frac{2} {q-1} \Bigl(
 \frac{\log x}{\log (x/q)}
 + \log q\log_2 \bigl( \frac{x}{q} \bigr)
 + \log \bigl( \frac{x}{q} \bigr)
 - (\log q) \log_2 k \Bigr),
\end{align}
where $k\ge 3$ is an odd integer we will choose later.
Since $k$ is odd and $q\ge 3$, the primes $p \leq kq$, $p \equiv b \pmod*{q}$, 
$b \in \{-1, 1\}$, are at most the ones of the type $p = 2jq + b$, where $j=1,\dotsc,(k-1)/2$.
So 
\begin{equation}
\label{smallprimes}
\frac{q-1}{2}
\sum_{\substack{p \leq kq \\ p \equiv b \pmod*{q}} } \frac{\log p}{p}  
 \le
 \frac{q-1}{2}
  \sum_{j=1}^{(k-1)/2}  \frac{\log(2jq-1)}{2jq-1}
  <
\frac{1}{4}   \sum_{j=1}^{(k-1)/2}  \frac{\log(2jq-1)}{j}.
\end{equation}

Combining \eqref{largeprimes}-\eqref{smallprimes}, we obtain
\begin{equation}
 \label{addition}
 \frac{q-1}{2}\!\!\!\!   \sum_{\substack{p \leq x \\ p \equiv b \pmod*{q}}}\!\! \frac{\log p}{p}   
 \leq  \frac{\log x}{\log (x/q)} 
+ (c_1(q,k) - \log_2 k) \log q
+\log q \log_2 \bigl( \frac{x}{q} \bigr)
+  \log \bigl(\frac{x}{q} \bigr),
\end{equation}
where $$c_1(q,k)= \frac{1}{4}   \sum_{j=1}^{(k-1)/2}  \frac{\log(2jq-1)}{j \log q}.$$
Letting $x=x_1= q^{\ell(q)}$ into \eqref{addition} leads to
\begin{align*}
\frac{q-1}{2 \log q}\!\!\!\! 
\sum_{\substack{p \leq x_1 \\ p \equiv b \pmod*{q}}}\!\! \frac{\log p}{p}  
&\le 
\log_2 q 
+\ell(q) + \log \ell(q)
+ c_1(q,k) - \log_2 k -1
+ \frac{c }{\log q},
\end{align*}
for every sufficiently large prime $q$ and some positive constant $c$.
Moreover, since  
$$c_1(q,k) 
\le 
\frac{1}{4}   \sum_{j=1}^{(k-1)/2}  \frac{1}{j} \Bigl(1+ \frac{\log (2j)}{1400}\Bigr)
\quad
\text{for every}\ q\ge \exp(1400)\approx 1.02866\cdot10^{608},$$
using \eqref{bounds-sigma1-trunc}
for sufficiently large $q$ we have
\begin{equation}\label{F-estim}
\vert S_1 \vert 
\le \log_2 q 
+\ell(q) + \log \ell(q)
+ c_2(k) - 1 + \frac{c}{\log q},
\end{equation}
where  
$$c_2(k)=\frac{1}{4}   \sum_{j=1}^{(k-1)/2}  \frac{1}{j} \bigl(1+ \frac{\log(2j)}{1400}\bigr) - \log_2 k.$$
The minimal value for $c_2(k)$
is attained for  $k=55$ and is $<-0.413812$.
%-0.41381240168684735904549217850714802117

Using \eqref{S3-estim}, \eqref{f2} and \eqref{F-estim} for $k=55$, we obtain
\begin{equation}
\Big \vert \Sigma + \frac{\delta_q}{\beta_0(1- \beta_0)\log q} \Big \vert
\le \log_2 q 
+ ( \delta_q+1) \ell(q) + \log \ell(q) + c_2(55) -1
\label{estim4}
+ \frac{c}{\ell(q)}, 
\end{equation}
for every sufficiently large prime $q$ and some positive constant $c$.

Combining \eqref{watowat}, \eqref{131} and \eqref{estim4}, we arrive at
for sufficiently large prime $q$ at
$$
\Big|\kappa(q) + \frac{\delta_q}{\beta_0(1- \beta_0)\log q}\Big| 
<  
\log_2 q 
+ ( \delta_q+1) \ell(q) + \log \ell(q)
-0.59,
$$
since $\frac{43}{26}-\frac{9}{13}\zeta(3) + c_2(55)-1 
< -0.59$.
The first part of Theorem \ref{gq-direct} immediately follows.

To prove the second part of Theorem \ref{gq-direct}, 
we split $\Sigma$ in \eqref{sigma-def} in two subsums $S_1, S_2$ 
defined ac\-cording to $p\le x_1$ and $p>x_1$. 
Let $A>0$ be a constant to be chosen later and let $x = x_1 = q^{2}(\log q)^A$ in \eqref{addition}.
We immediately obtain
\begin{equation} \label{128}
\vert S_1 \vert < \log_2 q  + c_2(55) +1 +  2(A+1) \frac{\log_2 q}{\log q}
\end{equation}
for sufficiently large $q$.
Moreover, arguing as in \eqref{late2}, we can write
\begin{equation}\label{late-rh-odd}
S_2
 = \frac{1}{\log q} \sum_{\chi(-1) = -1} 
 \lim_{y \to \infty} 
 \Bigl( \frac{\theta(y, \chi)}{y} - \frac{\theta(x_1, \chi)}{x_1} 
 + \int_{x_1}^{y} \theta( u,\chi) \frac{du}{u^2} \Bigr).
\end{equation}
Assuming Conjecture \ref{RH-odd-conjecture} holds,
we have that $\psi(x,\chi) \ll \sqrt{x}\, (\log x)^2$ for every odd Dirichlet character $\chi \pmod*{q}$,
see, e.g., \cite[p.~125, ch.~20]{Davenport}.
Recalling $\psi(x,\chi) = \theta(x,\chi) +O(\sqrt{x})$, we deduce from
\eqref{late-rh-odd} that
\begin{equation}
\label{S2-rh-estim}
S_2
\ll 
\frac{q}{\log q} \frac{(\log x_1)^2}{\sqrt{x_1}}
\ll_A
(\log q)^{1-A/2} = o(1),
\end{equation}
for every $A>2$.
Choosing $A=3$ we have,  by combining \eqref{128} and \eqref{S2-rh-estim}, 
\begin{equation}
\label{sigma-rh-estim}
\vert \Sigma \vert 
<\log_2 q  +  c_2(55) +  1 + \frac{c}{\sqrt{\log q}},
\end{equation}
where $c$ is a suitable positive constant.
Using \eqref{watowat}, \eqref{131} and \eqref{sigma-rh-estim} we finally obtain
$$
|\kappa(q)| 
< \log_2 q + 1.41
$$
for sufficiently large $q$, on observing that $\frac{43}{26}-\frac{9}{13}\zeta(3) + c_2(55) +1  
< 1.41$.
This proves the second part of Theorem \ref{gq-direct}.
\qed

\begin{Rem}[On the role of the Brun--Titchmarsh theorem in our results, I]
%\label{BT-Siegel}
The leading constant of the $\log_2 q$ term
in the estimates for $\kappa(q)$ in Theorem \ref{gq-direct} 
directly depends on eq.~\eqref{F-estim} 
that follows from using the Brun--Titchmarsh theorem (Classical Theorem \ref{BT-thm})
in \eqref{largeprimes}.
In particular, a key role in \eqref{largeprimes} is played by the constant $2$ present in \eqref{BT-estim}; 
any improvement of this constant to, e.g., $2-\xi$,
$\xi\in(1,2)$, will lead to an improvement of the leading constant $1$ to $1-\xi/2$ 
in Theorem \ref{gq-direct}.
From the works of Motohashi \cite{Motohashi1979}, Friedlander--Iwaniec
\cite{FriedlanderI1997}, Ramar\'e \cite[Theorem~6.5]{Ramare2009}  and Maynard
\cite[Proposition~3.5]{Maynard2013}, it is well known that replacing such a constant with any value less
than $2$ is equivalent to assuming the non-existence of the Siegel zero for the 
Dirichlet $L$-series defined using the whole set of Dirichlet characters modulo $q$.
However, since in our problem just the odd Dirichlet characters are 
involved, this is not quite what we 
need. It seems very challenging to pursue this road further, which we leave to the courageous reader.

\end{Rem}
\begin{Rem}[On the role of the Brun--Titchmarsh theorem in our results, II]
Montgomery and Vaughan, see \cite[Theorem~2]{MVsieve}, under the same hypotheses 
of Classical Theorem \ref{BT-thm}, also proved that there exists a
constant $C>0$ such that if $y>Cq$ then
\begin{equation}
\label{BT-alternative-MV}
\pi(x+y;q,a) - \pi(x;q,a) < \frac{2y}{\varphi(q) (\log(y/q) + 5/6)}.
\end{equation}
Usage of this estimate potentially allows
us to decrease the value of $C_1$ in Theorem \ref{gq-direct},
leading to improvements of the constants $-0.59$ and $1.41$.
Once $C$ has been made explicit, \eqref{BT-estim} 
can be replaced by \eqref{BT-alternative-MV}.
Selberg \cite[vol.~2, p.~233]{SelbergCollected2}  obtained
\eqref{BT-alternative-MV}
with $2.8$ instead of 
$\frac{5}{6}$, but also did not make $C$ explicit.
\end{Rem}

\begin{prop}\label{goneplus}
For any $ \delta > 0, B \geq 1$ we have
$$
\bigl\vert w_q - w_q(q^{1+\delta}) \bigr\vert \leq \frac{6+o(1)}{q-1} ,
$$
for all primes $q$, with
the possible exception of $\ll x/(\log x)^{B}$ primes $q \leq x$.
Under the Elliott--Halberstam conjecture \ref{EHconjecture}, we have that the stronger 
conclusion $w_q - w_q(q^{1+\delta})=o(1/q)$ holds for all primes $q$, with
the possible exception of $\ll x/(\log x)^{B}$ primes $q \leq x$.
\end{prop}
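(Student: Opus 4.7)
The plan is to express $w_q-w_q(q^{1+\delta})$ via partial summation as a tail integral involving $D(u):=\theta(u;q,1)-\theta(u;q,-1)$, and then to estimate this integral on three sub-ranges using, respectively, the Brun--Titchmarsh theorem (short range), the Bombieri--Vinogradov theorem (middle range, where the exceptional set of primes arises), and the Siegel--Walfisz theorem (tail). Using the equality $\lim_{u\to\infty}\theta(u;q,\pm 1)/u = 1/\varphi(q)$, partial summation yields
\[
(\log q)\bigl(w_q - w_q(q^{1+\delta})\bigr)
= -\frac{D(q^{1+\delta})}{q^{1+\delta}}
+ \int_{q^{1+\delta}}^{\infty}\frac{D(u)}{u^{2}}\,du,
\]
and the boundary term is $O\bigl(1/(\delta\,\varphi(q)\log q)\bigr)=o(1/q)$ by Classical Theorem~\ref{BT-thm}.

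I would then split the integral at the two thresholds $Y_1=x^{2}(\log x)^{2B+1}$ and $Y_2=e^{q}$. For $u\ge Y_2$, Siegel--Walfisz gives $|D(u)|\ll u\exp(-c\sqrt{\log u})$ and hence a contribution of at most $O(e^{-c\sqrt{q}})$ uniformly in $q$. For $u\in[Y_1,Y_2]$, every prime $q\le x$ satisfies the Bombieri--Vinogradov condition $q\le u^{1/2}/(\log u)^{B}$; a dyadic decomposition combined with \eqref{BV-type-ineq} yields
\[
\sum_{x/2<q\le x}\int_{Y_1}^{Y_2}\frac{|D(u)|}{u^{2}}\,du
\ll \frac{1}{(\log x)^{A-1}}
\]
for any $A>1$, and Markov's inequality produces the exceptional set of $\ll x/(\log x)^{B}$ primes. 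On the remaining range $[q^{1+\delta},Y_1]$, where Bombieri--Vinogradov is unavailable for $q\sim x$, I would apply Brun--Titchmarsh pointwise using $|D(u)|\le 4u\log u/(\varphi(q)\log(u/q))$, transform the integral via $u=qe^{v}$ into an elementary one, and optimise the parameters (leveraging the sharpened inequality of Montgomery--Vaughan mentioned in the remark following Theorem~\ref{gq-direct}) so that the contribution is bounded by $(6+o(1))\log q/\varphi(q)$. Since $\varphi(q)=q-1$ for primes, combining the three contributions gives $|w_q-w_q(q^{1+\delta})|\le (6+o(1))/(q-1)$.

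Under the Elliott--Halberstam conjecture~\ref{EHconjecture} the Bombieri--Vinogradov threshold $Y_1$ drops to $q^{1+\epsilon}$ for any $\epsilon>0$; the Brun--Titchmarsh range then shrinks to $[q^{1+\delta},q^{1+\delta+\epsilon}]$, whose contribution is $o(1/q)$ as $\epsilon\to 0$, and the stronger conclusion $w_q-w_q(q^{1+\delta})=o(1/q)$ follows. The chief technical obstacle of the unconditional argument is isolating the explicit constant~$6$ in the Brun--Titchmarsh step: the natural estimate of $\int \log u/(u\log(u/q))\,du$ contains a $\delta$-dependent piece of order $\log(1/\delta)$, and one must arrange the cutoff $Y_1$ and appeal to the improved Brun--Titchmarsh bound so that this $\delta$-dependence is absorbed into the $o(1)$ error for large $q$.
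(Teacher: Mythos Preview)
Your overall architecture---partial summation, a range split, Bombieri--Vinogradov on average plus Markov's inequality to extract the exceptional set---matches the paper's. The genuine gap is in the short-range step $[q^{1+\delta},Y_1]$, where you propose to use the \emph{pointwise} Brun--Titchmarsh bound $|D(u)|\le 4u\log u/\bigl(\varphi(q)\log(u/q)\bigr)$. Carrying out your own substitution $u=qe^{v}$ (with $Y_1\sim q^2$ for $q\sim x$) gives
\[
\frac{1}{\log q}\int_{q^{1+\delta}}^{Y_1}\frac{|D(u)|}{u^{2}}\,du
\;\le\; \frac{4}{q-1}\int_{\delta\log q}^{\log(Y_1/q)}\Bigl(1+\frac{\log q}{v}\Bigr)\,dv
\;=\;\frac{4}{q-1}\Bigl(\log\frac{1}{\delta}+1\Bigr)+o\Bigl(\frac{1}{q}\Bigr),
\]
so the contribution to $|w_q-w_q(q^{1+\delta})|$ carries an irreducible term $\tfrac{4}{q-1}\log(1/\delta)$. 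This is a \emph{constant} in $q$ coming from the lower endpoint $q^{1+\delta}$; it cannot be absorbed into an $o(1)$ error as $q\to\infty$, it is unaffected by moving the upper cutoff $Y_1$, and the Montgomery--Vaughan refinement (which only shifts $\log(u/q)$ by an additive $5/6$) does not touch it either. With pointwise Brun--Titchmarsh you obtain at best a $\delta$-dependent bound, not the uniform $(6+o(1))/(q-1)$ claimed in the proposition.

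The paper fills this gap by replacing pointwise Brun--Titchmarsh with Hooley's ``almost all'' version, quoted as \cite[Corollary~3.2]{CG}: for all but $O(x/(\log x)^B)$ primes $q\le x$ one has a bound on $\pi(u;q,b)$ on $q^{1+\delta}<u\le q^{2+\delta/2}$ whose effective denominator is $(\log q)^2$ rather than $\log(u/q)$, so there is no blow-up as $u\downarrow q^{1+\delta}$. The resulting integral $\int_{q^{1+\delta}}^{q^{2+\delta/2}}(\log u)\,du/u\sim\tfrac{3}{2}(\log q)^{2}$ then produces the constant $6=4\cdot\tfrac{3}{2}$ (cf.\ Remark~\ref{Remark-prop-53}). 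Two minor structural points: the paper uses only two ranges, since Bombieri--Vinogradov already covers all of $u>q^{2+\delta/2}$ and your separate Siegel--Walfisz tail is unnecessary (though harmless); and under Elliott--Halberstam the paper applies the conjecture directly from $u>q^{1+\delta}$ with no Brun--Titchmarsh buffer at all, which gives $o(1/q)$ cleanly rather than the $O_{\delta,\epsilon}(1/q)$ your shrinking-interval argument yields for any fixed $\epsilon>0$.
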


\begin{proof}
We will first prove the final assertion.
\par
Set $ E(t;q) = \pi(t;q,1) - \pi(t;q,-1).$
From \eqref{beta-def} and
using the Riemann--Stieltjes integration, we have, for $z \geq y \geq 3$, that
\begin{equation}
\label{swt}
\bigl(w_q(z) - w_q(y)\bigr)\log q  =  \int_{y}^{z} \frac{\log t} {t} d(E(t;q))
%\\& 
=  \frac{E(t;q) \log t}{t}\Big|_{y}^{z} 
+ \int_{y}^{z} \Bigl(\frac{\log t - 1} {t^2}\Bigr) E(t;q) dt.
\end{equation}
Letting
$S(t;x) = \sum_{\substack{x/2 < q \leq x\\ q \, \text{prime}} } | E(t;q)|,$
we obtain from \eqref{swt} that
\begin{align}\label{EHappl}
\sum_{\substack{x/2 < q \leq x\\ q \, \text{prime}} }|w_q - w_q(q^{1+\delta})|\log q 
& \ll   \lim_{z \to \infty} 
\Bigl(\frac{S(z;x) \log z}{z}\Bigr) + \frac{S(q^{1+\delta};x) (1+\delta)\log x}{(x/2)^{1+\delta}}  \notag\\
&\hskip1cm +
\int_{(x/2)^{1+\delta}}^{\infty} \frac{ S(t; x)\log t} {t^2} dt \notag \\
& \ll \frac{1}{(\log x)^{B+1}} + \int_{(x/2)^{1+\delta}}^{\infty} \frac{dt}{t(\log t)^{B+1}} \ll \frac{1}{(\log x)^{B}}.
\end{align}

The penultimate step follows from the Elliott--Halberstam conjecture \ref{EHconjecture}, taking $0 < \epsilon < \delta/ (1+\delta)$ and $A = B+2$. Since $t \geq q^{1+\delta}$, we have $q \le t^{1-\epsilon}$. 
Now, if $w_q - 
w_q(q^{1+\delta}) \geq \epsilon'/q$, for any $\epsilon' > 0$ and 
for  $ \gg x/(\log x)^{B}$ primes $q$ in the range $ x/2 < q \leq x$, then
$$ 
\frac{1}{(\log x)^{B-1}}\ll \sum_{\substack{x/2 < q \leq x\\ q \, \text{prime}} }|w_q - w_q(q^{1+\delta})|\log q \ll \frac{1}{(\log x)^{B}},
$$
which is impossible.
The proof of the final assertion is completed on summing over 
the appropriate dyadic intervals $[2^{-i-1}x, 2^{-i}x]$ with
$i=0,\ldots,\lfloor{\frac{\log x}{\log 2}}\rfloor$.

If instead of assuming the Elliott--Halberstam conjecture \ref{EHconjecture},
we invoke the Bombieri--Vinogradov theorem, we obtain
\begin{equation}\label{firstest}
w_q - w_q(q^{2 +\delta/2}) = o\Bigl(\frac{1}{q}\Bigr),
\end{equation}
for any $B \geq 1$ and $\delta > 0$, and for all but $O(x/(\log x)^{B})$ primes $q \leq x$.
Note that
\begin{equation}\label{wqmax}
    \vert w_q(q^{2 +\delta/2}) - w_q(q^{1 +\delta}) \vert 
    \leq 
    \frac{1}{\log q} \max \Big\{\sum_{\substack{p \equiv 1 \pmod*{q} \\ q^{1+\delta} < p \leq q^{2+\delta/2} }} \frac{\log p}{p}, 
    ~ \sum_{\substack{p \equiv -1 \pmod*{q} \\ q^{1+\delta} < p \leq q^{2+\delta/2} }} \frac{\log p}{p} \Big\}.
\end{equation}
As before let $b\in \{-1,1\}$. Using the partial summation
formula and \cite[Corollary~3.2]{CG}, we have
\begin{align}
  \notag
   \frac{1}{\log q}\sum_{\substack{p \equiv b \pmod*{q} \\ q^{1+\delta} < p \leq q^{2+\delta/2} }} \frac{\log p}{p}
   & =  
   (2+\delta/2) \frac{\pi(q^{2+\delta/2};q,b)}{q^{2+\delta/2}} 
   - 
   (1+\delta)\frac{\pi(q^{1+\delta};q,b)}{q^{1+\delta}} 
   \\
   \notag
  & \hskip1cm 
  + \frac{1}{\log q} \int_{q^{1+\delta}}^{q^{2+\delta/2}}  \frac{(\log u -1) \pi(u;q,b)}{u^2}  du  \notag 
  \\
  \notag
   & \leq  
   \frac{4+o(1)}{(q-1) (\log q)(\log (q/2))} \int_{q^{1+\delta}}^{q^{2+\delta/2}}  \frac{\log u }{u} du + o\Bigl(\frac{1}{q}\Bigr) \notag 
   \\
   \label{hooleyapp}
   & \leq \frac{6+o(1)}{q-1},
   \end{align}
for any $B \geq 1$ and $\delta > 0$, and for all but $O(x/(\log x)^{B})$ primes $q \leq x$. 
Combination of \eqref{wqmax} and \eqref{hooleyapp} yields
\begin{equation}\label{secest}
\vert w_q(q^{2 +\delta/2}) - w_q(q^{1 +\delta}) \vert \leq  \frac{6+o(1)}{q-1},
\end{equation}
for any $B \geq 1$ and $\delta > 0$, and for all but $O(x/(\log x)^{B})$ primes $q \leq x$.

The first assertion is now obtained on combining the estimates in \eqref{firstest} and \eqref{secest}.
\end{proof}

\begin{Rem}\label{Remark-prop-53}
In \cite[p.~591]{CG} the value $4$ instead of $6$ is obtained;
this difference is due to the presence of the $\log p/\log q$ weight 
in our sum $w_q$, defined in \eqref{beta-def}, while 
they have no weight in their sum $g_q$.
In the critical range 
$q^{1+\delta}< p \le q^{2+\delta/2}$, such a weight assumes
 values between $1+\delta$ and $2+\delta/2$. 
Hence for sure the value $4$ cannot be obtained in our case and straightforward estimates
would give a value of $8+2\delta$. Instead, we were able to obtain $6$, by arguing 
as we do above to get from \eqref{wqmax} to \eqref{secest}. 
\end{Rem}

We need the next corollary to prove Proposition  \ref{imp}.

\begin{cor}\label{phoneplus}
For any $ \delta > 0, B \geq 1$ the inequality
$$
\Big\vert \kappa(q)- \frac{q-1}{2}w_q(q^{1+\delta}) \Big\vert \leq 3 +o(1)
$$
holds for every prime $q$, with
the possible exception of $\ll x/(\log x)^{B}$ primes $q \leq x$.
Under the Elliott--Halberstam conjecture \ref{EHconjecture} one can replace $3+o(1)$ with $o(1)$.
\end{cor}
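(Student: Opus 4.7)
The plan is to prove this as a direct corollary of Propositions \ref{nonsplit} and \ref{goneplus}, using the decomposition \eqref{watowat}. Starting from $\kappa(q)=\frac{q-1}{2}(v_q+w_q)$, I would write
\[
\kappa(q)-\frac{q-1}{2}w_q(q^{1+\delta}) \;=\; \frac{q-1}{2}\,v_q \;+\; \frac{q-1}{2}\bigl(w_q-w_q(q^{1+\delta})\bigr),
\]
so by the triangle inequality it suffices to control these two summands separately on a set of primes whose complement has size $\ll x/(\log x)^B$.

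For the $v_q$-term, I would invoke Proposition \ref{nonsplit}, which asserts that $v_q = o(1/q)$ for all primes $q\le x$ except for at most $O(\sqrt x\,(\log x)^2)$ exceptions. Hence, outside this exceptional set,
\[
\frac{q-1}{2}\,|v_q| \;=\; o(1),
\]
and, crucially, the exceptional set $O(\sqrt x\,(\log x)^2)$ is absorbed into $O(x/(\log x)^B)$ for any fixed $B\ge 1$.

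For the $w_q$-term, I would apply Proposition \ref{goneplus}, which gives $|w_q-w_q(q^{1+\delta})|\le (6+o(1))/(q-1)$ for all primes $q\le x$ away from an exceptional set of size $\ll x/(\log x)^B$. Multiplying by $(q-1)/2$ yields
\[
\frac{q-1}{2}\,\bigl|w_q-w_q(q^{1+\delta})\bigr| \;\le\; 3+o(1)
\]
on the complement of this set. Taking the union of the two (individually negligible) exceptional sets, and combining the two estimates, gives the desired bound $|\kappa(q)-\frac{q-1}{2}w_q(q^{1+\delta})|\le 3+o(1)$ away from $\ll x/(\log x)^B$ primes.

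Finally, under the Elliott--Halberstam conjecture \ref{EHconjecture}, the stronger conclusion $w_q-w_q(q^{1+\delta})=o(1/q)$ in Proposition \ref{goneplus} replaces the $3+o(1)$ bound above with $o(1)$; combined with the same $v_q$ bound, this yields $|\kappa(q)-\frac{q-1}{2}w_q(q^{1+\delta})|=o(1)$, as claimed. There is no serious obstacle here: the entire content of the corollary is the decomposition plus the two prior propositions, so this is essentially a bookkeeping argument.
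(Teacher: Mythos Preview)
Your proposal is correct and follows exactly the paper's approach: the paper's own proof reads ``The proof is immediate on combining \eqref{watowat}, Propositions \ref{nonsplit} and \ref{goneplus},'' which is precisely the decomposition-plus-triangle-inequality argument you have written out. Your explicit observation that the exceptional set $O(\sqrt{x}(\log x)^2)$ from Proposition \ref{nonsplit} is absorbed into $O(x/(\log x)^B)$ is a correct and useful detail.
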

\begin{proof}
The proof is immediate on combining \eqref{watowat}, Propositions \ref{nonsplit} and \ref{goneplus}.
\end{proof}

\begin{lem} \label{kaya}
For given $\lambda, \epsilon > 0$, there exists a $\delta > 0$ such that for sufficiently 
large values of $x$, there are $ \leq \lambda x/(\log x)^{2}$ primes $q \leq x$ such that $2q+1$ is prime and 
$$
\Big| w_q(q^{1+\delta}) - \frac{1}{2q+1} \Big| \geq \frac{\epsilon}{q}.
$$
\end{lem}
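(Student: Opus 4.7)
The plan is a first-moment/Markov argument combined with an upper bound sieve for three linear forms. First I would isolate the contribution of the prime $p=2q+1$ itself. Since $2q+1\equiv 1\pmod{q}$ and $2q+1\le q^{1+\delta}$ for all large $q$, this prime contributes
\[
\frac{\log(2q+1)}{(2q+1)\log q}=\frac{1}{2q+1}+\frac{\log 2+O(1/q)}{(2q+1)\log q}=\frac{1}{2q+1}+o\!\left(\tfrac{1}{q}\right)
\]
to $w_q(q^{1+\delta})$. Hence, for $q$ large enough, the exceptional condition $|w_q(q^{1+\delta})-1/(2q+1)|\ge \epsilon/q$ forces
\[
\bigg|\sum_{\substack{p\le q^{1+\delta}\\ p\equiv\pm 1\,(q)\\ p\ne 2q+1}}\frac{\pm\log p}{p\log q}\bigg|\ge \frac{\epsilon}{2q}.
\]

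Next, parametrize the primes contributing to the above sum by writing $p=mq+b$ with $b\in\{-1,1\}$, $2\le m\le q^\delta$ (note $m$ must be even since $q$ is odd), and $(m,b)\ne(2,1)$. Using the trivial bound $\log p/\log q\le 1+\delta$, each term is at most $(1+\delta)/(mq-1)$, so the displayed inequality implies
\[
\sum_{(m,b)\in E_q}\frac{1}{m}\ge \frac{\epsilon}{8},
\]
where $E_q$ is the set of such pairs for which $mq+b$ is prime. By Markov's inequality and swapping the order of summation,
\[
N_{\text{bad}}(x)\le \frac{8}{\epsilon}\sum_{\substack{2\le m\le x^\delta\\ b\in\{-1,1\},\,(m,b)\ne(2,1)}}\frac{1}{m}\,\widetilde N(x;m,b),
\]
where $\widetilde N(x;m,b)$ counts primes $q\le x$ for which $q$, $2q+1$ and $mq+b$ are simultaneously prime.

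The crucial ingredient is then a three-dimensional upper bound sieve (Selberg, or Halberstam--Richert) applied to the three linear forms $q$, $2q+1$, $mq+b$, yielding the uniform estimate
\[
\widetilde N(x;m,b)\ll \frac{x}{(\log x)^3}\,\mathfrak{S}(m,b),
\]
where the singular series has the shape $\mathfrak{S}(m,b)\ll\prod_{p\mid D(m,b)}p/(p-1)$ for an integer $D(m,b)$ depending mildly on the triple, and satisfies $\sum_{m\le M}\mathfrak{S}(m,b)/m\ll \log M$. Inserting this bound gives
\[
N_{\text{bad}}(x)\ll \frac{1}{\epsilon}\cdot\frac{x}{(\log x)^3}\cdot \log(x^\delta)\ll \frac{\delta\,x}{\epsilon\,(\log x)^2},
\]
and choosing $\delta=\delta(\lambda,\epsilon)$ sufficiently small makes this $\le \lambda x/(\log x)^2$ for all large $x$.

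The main obstacle is establishing the three-variable sieve bound with the correct dependence on $m$. The two-variable Lemma \ref{lem1} only produces $\widetilde N(x;m,b)\ll x/(\log x)^2$ per pair $(q,mq+b)$, and after the $\sum 1/m$ summation this would merely recover the trivial bound coming from the count of $q\le x$ with $2q+1$ prime, losing the parameter $\lambda$. It is therefore essential to exploit all three prime conditions simultaneously in order to gain the additional factor of $1/\log x$, and the singular series must be handled with enough uniformity in $(m,b)$ so that $\sum_{m\le x^\delta}\mathfrak{S}(m,b)/m$ contributes exactly a factor of order $\delta\log x$ rather than an absolute constant.
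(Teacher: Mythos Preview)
Your proposal is correct and follows essentially the same route as the paper: a first-moment/Markov argument over primes $q\le x$ with $2q+1$ prime, followed by the three-variable upper-bound sieve for the forms $q,\,2q+1,\,mq+b$ and the estimate $\sum_{m\le x^\delta}\mathfrak{S}(m,b)/m\ll \delta\log x$ to recover the factor $\delta$. The only cosmetic difference is that the paper subtracts $(1+\delta)/(2q+1)$ (which matches the weight $\log p/\log q$ exactly at $p=2q+1$) and works on dyadic intervals $(x/2,x]$, whereas you isolate the $p=2q+1$ contribution directly and argue over all $q\le x$; both lead to the same sieve sum.
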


\begin{proof}
Given any positive integer $k$ we define 
$$
N^{\pm}_k =\#\{q : x < q \leq 2x \ \text{and}\ q, 2q+1 \ \text{and} \ kq\pm 1 \ \text{are all prime}\}.
$$
By applying both the Brun and the Selberg sieve  \cite[Theorem~5.7]{Hal} we obtain 
\begin{equation}\label{nplus}
N^{+}_k(x)  \ll \frac{x}{(\log x)^{3}} \prod_{p \mid k(k-2)} \frac{p}{p-1},  \text{~for~}k \geq 3,
\end{equation}
and
\begin{equation}\label{nminus}
N^{-}_k(x)  \ll \frac{x}{(\log x)^{3}} \prod_{p \mid k(k+2)} \frac{p}{p-1},  \text{~for~}k \geq 1.
\end{equation}
Clearly
 \begin{align}\label{muk}
\sum_{\substack{x/2 < q \leq x \\ q,\, 2q+1\, \text{primes}}} q \Big| w_q(q^{1+\delta}) - \frac{1}{2q+1} \Big| \leq 
\sum_{\substack{x/2 < q \leq x \\ q,\, 2q+1\, \text{primes}}} q \Big| w_q(q^{1+\delta}) - \frac{1+\delta}{2q+1} \Big|
+  
\delta \sum_{\substack{x/2 < q \leq x \\ q,\, 2q+1\, \text{primes}}} 1.
\end{align}
The third term is
$\ll \delta x/(\log x)^{2}$ by Lemma \ref{lem1} with $a=1$ and $t=2$.
It is clear from
 the definition of 
 $w_q(q^{1+\delta})$ that
\begin{align}\label{most}
\notag
\sum_{\substack{x/2 < q \leq x \\ q,\, 2q+1\, \text{primes}}} q \Big| w_q(q^{1+\delta}) - \frac{1+\delta}{2q+1} \Big| 
& \le \sum_{\substack{x/2 < q \leq x \\ q,\, 2q+1\, \text{primes}}} \frac{q}{\log q} 
\Bigl(   \sum_{\substack{p\le q^{1+\delta}\\ p\equiv \pm 1 \pmod*{q}}}\frac{\log p}{p} - \frac{(1+\delta) \log q}{2q+1 } 
\Bigr)\\ \notag
& \ll (1+\delta) \sum_{\substack{x/2 < q \leq x \\ q,\, 2q+1\, \text{primes}}}
\sum_{\substack{p \leq q^{1+\delta}\\ p \equiv \pm 1 \pmod*{q} \\
p \neq 2q + 1}} \frac{q}{p \mp 1}\\
\notag
& \ll (1+\delta) \Bigl(\sum_{k =2}^{x ^{\delta}}
\frac{N^+_{2k}(x)}{2k} + \sum_{k = 1}^{x ^{\delta} - 1}
\frac{N^-_{2k}(x)}{2k} \Bigr) \\
& \ll (1+\delta) \frac{x}{(\log x)^{3}} \sum_{k=1}^{x^{\delta}} \frac{1}{k} \prod_{p \mid k(k+1)} \frac{p}{p-1},
\end{align}
where the final estimate 
is a consequence 
of \eqref{nplus} and \eqref{nminus}.
By \cite[p.~328]{Gr} we have 
$$ \sum_{k=1}^{y} \prod_{p \mid k(k+1)} \frac{p}{p-1} = (C_1 + o(1)) y,$$
where 
$$C_1 = \prod_{p}\Big( 1+ \frac{2}{p(p-1)}\Big) = 3.279577\dots,$$  which by partial summation gives
$$
\sum_{k=1}^{x^{\delta}} \frac{1}{k} \prod_{p \mid k(k+1)} \frac{p}{p-1} = \delta(C_1 + o(1)) \log x.
$$
Inserting this estimate in \eqref{most}, we infer from \eqref{muk} that there exists a positive constant $C_2$ such that
\begin{equation}\label{hina}
\sum_{\substack{x/2 < q \leq x \\ q,\, 2q+1\, \text{primes}}} q \Big| w_q(q^{1+\delta}) 
- \frac{1}{2q+1} \Big| \leq \frac{C_2 \delta(2+\delta) x}{(\log x)^{2}}.
\end{equation}
Now, if for any $\lambda, \eps,\delta  > 0$, there are $ > \lambda x/(\log x)^{2}$ primes $x/2 < q \leq x$ such that $2q+1$ is prime and 
$$
\Big| w_q(q^{1+\delta}) - \frac{1}{2q+1} \Big| \geq \frac{\epsilon}{q},
$$
then
$$ 
\sum_{\substack{x/2 < q \leq x \\ q,\, 2q+1\, \text{primes}}} q \Big| w_q(q^{1+\delta}) - \frac{1}{2q+1} \Big| 
\geq 
\frac{\lambda \epsilon x}{(\log x)^{2}},
$$
which contradicts \eqref{hina},
provided that $\delta( 2+\delta) < \epsilon \lambda/C_2$.
The result follows from summing over the appropriate dyadic intervals.
\end{proof}

The next proposition shows the asymptotic distribution of $\kappa(q)$ and $r(q)$
under the assumption of the Elliott--Halberstam conjecture \ref{EHconjecture} and of the
Hardy--Littlewood conjecture for Sophie Germain primes \emph{\ref{HLsophie}}. 
Later, in Theorem \ref{cr4}, again assuming the 
Elliott--Halberstam conjecture \ref{EHconjecture}, we will see more 
refined asymptotic information about $\kappa(q)$ and how this is related with
having infinitely many prime pairs of the form $(q,mq+1)$, and of the form $(q,mq-1)$.

\begin{prop}\label{mainprop}
Assuming the Elliott--Halberstam conjecture \emph{\ref{EHconjecture}} and the Hardy--Littlewood conjecture
for Sophie Germain primes \emph{\ref{HLsophie}}, 
we have that
$$\kappa(q)=\frac{1}{4}+o(1)\quad \text{and} \quad r(q) = \frac{1}{4} + o(1),$$
for all primes $q$ belonging to some infinite sets $P_1$, respectively $P_2$, each containing $\gg x/(\log x)^{2}$ primes $q \leq x$.
\end{prop}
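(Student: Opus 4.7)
The plan is to combine three ingredients: the positive density of Sophie Germain primes supplied by Conjecture \ref{HLsophie}, the localization of $\kappa(q)$ on the truncated sum $w_q(q^{1+\delta})$ supplied by Corollary \ref{phoneplus} (which uses the Elliott--Halberstam conjecture \ref{EHconjecture}), and the concentration of $w_q(q^{1+\delta})$ around $1/(2q+1)$ provided by Lemma \ref{kaya}. The statement for $r(q)$ follows by the analogous argument (or is precisely \cite[Theorem~1.3]{CG}), replacing $w_q(q^{1+\delta})$ by the Croot--Granville truncated sum $g_q(q^{1+\delta})$ and Corollary \ref{phoneplus} by the corresponding bound for $r(q)-\tfrac{q-1}{2}g_q(q^{1+\delta})$.

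First I would set $\mathcal{S}(x)=\{q\le x:q \text{ and } 2q+1 \text{ are both prime}\}$, so by Conjecture \ref{HLsophie} we have $|\mathcal{S}(x)|\gg x/(\log x)^2$. The arithmetic input is the simple observation that whenever $q\in\mathcal{S}(x)$, the prime $p=2q+1$ lies in the residue class $1\pmod{q}$, and contributes a term of size $\log(2q+1)/(2q+1)$ to the positive part of $w_q(q^{1+\delta})$ once $\delta>0$ is fixed; heuristically this is the entire content of $w_q(q^{1+\delta})$, which is exactly what Lemma \ref{kaya} makes rigorous in density form.

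Now fix $\epsilon>0$ and a small $\lambda>0$, chosen so that $\lambda$ is much smaller than the implicit constant in the Hardy--Littlewood lower bound for $|\mathcal{S}(x)|$. Lemma \ref{kaya} provides $\delta=\delta(\lambda,\epsilon)>0$ such that, for all sufficiently large $x$, at most $\lambda x/(\log x)^2$ primes $q\in\mathcal{S}(x)$ violate
\[
\Bigl|w_q(q^{1+\delta})-\tfrac{1}{2q+1}\Bigr|<\tfrac{\epsilon}{q}.
\]
In parallel, Corollary \ref{phoneplus} applied with (say) $B=3$ discards an exceptional set of size $\ll x/(\log x)^3=o\bigl(x/(\log x)^2\bigr)$ outside which $|\kappa(q)-\tfrac{q-1}{2}w_q(q^{1+\delta})|=o(1)$. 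The intersection of $\mathcal{S}(x)$ with the complement of both exceptional sets still has cardinality $\gg x/(\log x)^2$, and on this set
\[
\kappa(q)=\tfrac{q-1}{2(2q+1)}+O(\epsilon)+o(1)=\tfrac{1}{4}+O(\epsilon)+o(1).
\]

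The final step, and the only slightly delicate one, is the diagonalization to pass from the $\epsilon$-dependent statement to $\kappa(q)=\tfrac14+o(1)$ on a single infinite set $P_1$. Choosing a sequence $\epsilon_n=1/n$ with corresponding $\delta_n$ and thresholds $x_n\to\infty$, one assembles $P_1$ from the blocks $(x_n,x_{n+1}]$ so that on the $n$-th block one has $|\kappa(q)-\tfrac14|<C/n$ for all $q\in P_1$, while still preserving the density $|P_1\cap(0,x]|\gg x/(\log x)^2$; this is a standard construction and is the one place where any care is required. The analogous construction using $g_q(q^{1+\delta})$ yields $P_2$ with $r(q)=\tfrac14+o(1)$, completing the proof.
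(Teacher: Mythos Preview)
Your proposal is correct and follows essentially the same route as the paper: combine the Sophie Germain lower bound (Conjecture \ref{HLsophie}), Lemma \ref{kaya} to pin $w_q(q^{1+\delta})$ near $1/(2q+1)$, and Corollary \ref{phoneplus} under EH to pass from $\tfrac{q-1}{2}w_q(q^{1+\delta})$ to $\kappa(q)$. The only minor discrepancies are that the paper cites \cite[Proposition~3]{Gr} rather than \cite[Theorem~1.3]{CG} for the $r(q)$ assertion, and that you spell out the $\epsilon$-diagonalization that the paper leaves implicit.
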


\begin{proof}
The Hardy--Littlewood conjecture for Sophie Germain primes, i.e., Conjecture \ref{HLsophie},
implies that there exists a constant
$C_3$ such that there are $\geq C_3 x/(\log x)^{2}$ primes $q \leq x$ for which $2q+1$ is also prime.
For any $\epsilon > 0$, Lemma \ref{kaya} confirms that there exists a $\delta > 0$ such that for $ \geq \frac{1}{2}C_3 x(\log x)^{-2}$ 
primes $q \leq x$ such that $2q+1$ is prime and 
$$
\Big| w_q(q^{1+\delta}) - \frac{1}{2q+1} \Big| \leq \frac{\epsilon}{q},
$$ 
and for any $\delta > 0$,
Corollary \ref{phoneplus} implies that

$$
\kappa(q) -  \frac{q-1}{2}w_q(q^{1+\delta}) = o(1),
$$
assuming the Elliott--Halberstam conjecture \ref{EHconjecture}. 
The assertion on $\kappa(q)$ follows on combining the last two estimates.

Assuming the Elliott--Halberstam conjecture \ref{EHconjecture} and the Hardy--Littlewood 
conjecture for Sophie Germain primes \ref{HLsophie}, the statement of Proposition 3 of  
\cite{Gr} is that for $\gg x/(\log x)^{2}$ primes $q \leq x$, one has
$qf_q =\frac{1}{2} + o(1)$ (recall Definition \ref{fqdefi}).
Using \eqref{loggie}, i.e.,
$r(q) = \frac{q-1}{2} f_q$, our second assertion is clearly equivalent to this.
\end{proof}

A variation of the above method allows one to prove  
the following rather more general proposition.

\begin{prop}\label{gen}
Assume that both the Elliott--Halberstam conjecture \ref{EHconjecture} and 
the lower bound version of the Hardy--Littlewood conjecture \ref{HLconjecture} 
are true. Let 
$\mathcal A=\{a_1,\ldots,a_s\}$ be any admissible set and choose
$b \in \{-1, 1\}$.
We have that
$$\kappa(q) = \frac{b}2 
\mu(\mathcal A)+o(1) \quad \text{and} \quad r(q) = \frac{b}2 
\mu(\mathcal A)+o(1),
$$
for all primes $q$ belonging to some infinite sets $P_3,P_4$, respectively,
each containing $\gg_{\epsilon} x/(\log x)^{s+1}$ primes $q \leq x$.
\end{prop}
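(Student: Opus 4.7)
The plan is to generalize the proof of Proposition \ref{mainprop} by replacing the single Sophie Germain condition ``$2q+1$ is prime'' with the full set of linear conditions attached to $\mathcal A$. First, applying the lower bound version of the Hardy--Littlewood conjecture \ref{HLconjecture} to $\mathcal A$ with the sign $b$, I obtain an infinite set $\mathcal Q=\mathcal Q(\mathcal A,b)$ containing $\gg_{\mathcal A} x/(\log x)^{s+1}$ primes $q\le x$ such that $a_1q+b,\ldots,a_sq+b$ are simultaneously prime. For each such $q$, these $s$ primes all lie in the residue class $b\pmod q$ and together contribute to $w_q(q^{1+\delta})$ an amount
\begin{equation*}
\frac{b}{\log q}\sum_{i=1}^{s}\frac{\log(a_iq+b)}{a_iq+b} = \frac{b\,\mu(\mathcal A)}{q}+O\Bigl(\frac{1}{q\log q}\Bigr),
\end{equation*}
which, after multiplication by $(q-1)/2$, is precisely the announced main term $\frac{b}{2}\mu(\mathcal A)+o(1)$.

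The core of the argument is a generalization of Lemma \ref{kaya}: for any $\lambda,\epsilon>0$, there exists $\delta=\delta(\mathcal A,\epsilon,\lambda)>0$ such that, for all but $\le \lambda x/(\log x)^{s+1}$ primes $q\in\mathcal Q\cap(x/2,x]$, one has
\begin{equation*}
\Bigl|w_q(q^{1+\delta})-\frac{b\,\mu(\mathcal A)}{q-1}\Bigr|\le \frac{\epsilon}{q}.
\end{equation*}
I would follow the template of Lemma \ref{kaya}: subtract off the contribution of the expected primes $a_iq+b$, then decompose the residual prime sum over $p\equiv\pm 1\pmod q$ with $p\le q^{1+\delta}$ according to $p=kq\pm 1$ with $k\le 2x^\delta$, and bound the number of admissible $q$ producing each such $k$ via a Selberg/Brun sieve of dimension $s+2$, generalizing \eqref{nplus}--\eqref{nminus}. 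This leads to a sum of the form $x(\log x)^{-(s+2)}\sum_k k^{-1}\prod_{p\,\mid\, k\prod_i(a_ik\mp 1)}p/(p-1)$. The main technical obstacle is proving that this averaged singular series is $\ll_{\mathcal A}\log x$: this is the natural generalization of the averaging estimate invoked on p.~328 of \cite{Gr} and should be accessible by the same Dirichlet convolution method, since each local Euler factor differs from $1$ by $O(1/p^2)$, but the combinatorics of the resultants between the forms $a_iX+b$ and $kX\mp 1$ must be handled carefully. Granting this, the total discrepancy is $\ll_{\mathcal A}\delta(1+\delta)\,x/(\log x)^{s+1}$, which is less than $\epsilon\lambda x/(\log x)^{s+1}$ for $\delta$ small enough.

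With the generalized Lemma \ref{kaya} in hand, Corollary \ref{phoneplus} (which uses the Elliott--Halberstam conjecture \ref{EHconjecture}) gives $\kappa(q)-\tfrac{q-1}{2}w_q(q^{1+\delta})=o(1)$ outside a set of size $\ll x/(\log x)^{B}$, and combining yields $\kappa(q)=\tfrac{b}{2}\mu(\mathcal A)+o(1)$ along a subset $P_3\subseteq\mathcal Q$ of cardinality $\gg_{\mathcal A,\epsilon}x/(\log x)^{s+1}$; a standard diagonal argument letting $\epsilon=\epsilon_n\to 0$ yields the desired infinite set. For $r(q)$ no new machinery is needed: this is essentially Theorem~2 of Granville \cite{Gr}, which under the same hypotheses supplies a set $P_4$ of $\gg x/(\log x)^{s+1}$ primes with $qf_q=\tfrac{b}{2}\mu(\mathcal A)+o(1)$, and the identity $r(q)=\tfrac{q-1}{2}f_q$ from \eqref{loggie} then translates this directly to the claimed asymptotic for $r(q)$.
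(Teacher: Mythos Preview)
Your proposal is correct and follows essentially the same approach as the paper. The paper's proof is extremely terse---it just says the result follows ``on invoking Proposition~\ref{mainprop} and \cite[Proposition~4]{Gr}''---whereas you spell out what that actually entails: the generalization of Lemma~\ref{kaya} to an arbitrary admissible set $\mathcal A$ via an $(s{+}2)$-dimensional sieve bound and the corresponding averaged singular-series estimate, followed by Corollary~\ref{phoneplus} under the Elliott--Halberstam conjecture; and for $r(q)$ you correctly defer to Granville (the paper cites his Proposition~4, you cite his Theorem~2, but these are the same result in the form $qf_q=b\,\mu(\mathcal A)+o(1)$, translated to $r(q)$ via $r(q)=\tfrac{q-1}{2}f_q$).
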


\begin{proof}
By \eqref{loggie} we have $r(q) = \frac{q-1}{2} f_q$.
The proof now follows on invoking
Proposition \ref{mainprop} and  \cite[Proposition~4]{Gr}.
\end{proof}

\begin{Thm}
\label{gqrange}
Assume that both the Elliott--Halberstam conjecture \ref{EHconjecture} and 
the lower bound version of the
Hardy--Littlewood conjecture \ref{HLconjecture} are true.
Let $E$ be the sequence
$\{\kappa(q)\}_q$,
where $q$ ranges over the primes.
For any admissible set ${\mathcal A}$, 
the numbers
${\mu(\mathcal A)/2}$ and ${-\mu(\mathcal A)/2}$ are both
limit points of $E$. Moreover, the sequence $E$ has $[- \infty,\infty]$ as set of 
limit points.
\end{Thm}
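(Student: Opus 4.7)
The plan is to deduce Theorem \ref{gqrange} directly from the combination of Proposition \ref{gen} and Theorem \ref{unbounded}, together with the elementary fact that the set of limit points of a real sequence is closed in the two-point compactification $[-\infty,\infty]$.

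For the first assertion, I would fix an admissible set $\mathcal{A}$ and invoke Proposition \ref{gen} twice: once with $b=1$, which yields an infinite set $P_3$ of primes of density $\gg x/(\log x)^{s+1}$ along which $\kappa(q)\to \mu(\mathcal{A})/2$, showing that $\mu(\mathcal{A})/2$ is a limit point of $E=\{\kappa(q)\}_q$; and once with $b=-1$, which gives in the same way that $-\mu(\mathcal{A})/2$ is a limit point of $E$. This is already essentially the content of that proposition; no further work is needed.

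For the second assertion, I would use Theorem \ref{unbounded}, which says that the closure of $\{\mu(\mathcal{A}) : \mathcal{A}\text{ admissible}\}$ equals $[0,\infty]$. Given any target value $r\in[0,\infty]$, pick admissible sets $\mathcal{A}_j$ with $\mu(\mathcal{A}_j)\to r$. By the first part, each $\mu(\mathcal{A}_j)/2$ lies in the set $E'$ of limit points of $E$. Since $E'$ is closed in $[-\infty,\infty]$ and $\mu(\mathcal{A}_j)/2 \to r/2$, we conclude $r/2\in E'$. Letting $r$ range over $[0,\infty]$ gives $[0,\infty]\subseteq E'$; applying the same reasoning with $b=-1$ gives $[-\infty,0]\subseteq E'$, and together these cover $[-\infty,\infty]$.

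I do not anticipate a serious obstacle here, since the heavy lifting was carried out already in Proposition \ref{gen} (which built in both the Elliott--Halberstam and Hardy--Littlewood assumptions) and in Theorem \ref{unbounded} (which provides admissible sets of arbitrarily prescribed measure). The only genuinely new ingredient is the passage from the countable family of values $\pm\mu(\mathcal{A})/2$ to the full interval $[-\infty,\infty]$, and this relies solely on the closedness of $E'$ — a point worth stating explicitly in the write-up but not requiring any analytic input.
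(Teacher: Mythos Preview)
Your proposal is correct and follows essentially the same approach as the paper: the first assertion is an immediate consequence of Proposition~\ref{gen}, and the second follows by combining Proposition~\ref{gen} with Theorem~\ref{unbounded} (which is precisely \cite[Proposition~7]{Gr}, the reference the paper invokes). Your explicit mention of the closedness of the set of limit points in $[-\infty,\infty]$ is a welcome clarification that the paper leaves implicit.
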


\begin{proof}
The first statement directly follows from Proposition 
\ref{gen}.  Using \cite[Proposition~7]{Gr}, 
the closure of the set $ \{\mu(\mathcal{A}) : \mathcal{A}\ \text{is an admissible set}\}$ 
is seen to be $[ 0, \infty]$. Combining this with Proposition
\ref{gen}, the second statement follows too.  
\end{proof}

\section{Deeper analysis of \texorpdfstring{$\kappa(q)$}{kqCG} exploiting Croot and Granville's method}
\label{sec:CG-analogues}
In 2002, Croot and Granville \cite{CG} published a follow up of \cite{Gr}.
In it they address a more refined question on the value distribution of
$r(q)$, namely for how many $q$ the Kummer ratio $r(q)$ is very close to a prescribed real number
assuming that both the Elliott--Halberstam conjecture \ref{EHconjecture} and 
the asymptotic version of the
Hardy--Littlewood conjecture \ref{HLconjecture-asymp} are true.
Moreover, they were able to show that Kummer's conjecture 
that $r(q)=o(1)$ cannot be true assuming a different set
of hypotheses than the ones used in \cite[Theorem~1.9]{Gr}: 
they replaced the Elliott--Halberstam and the Hardy--Littlewood
conjecture for Sophie Germain primes 
(Conjectures \ref{EHconjecture} and \ref{HLsophie}, respectively) 
with a stronger Hardy--Littlewood conjecture, namely Conjecture \ref{HLconjecture-asymp}.
Given a positive rational number $r$, this then entails studying for how many $\mathcal A$ we have $\mu(\mathcal A)=r$.

Applying the ideas  used in \cite{CG} to the value
distribution of $\kappa(q)$, we obtain the following  results.

\begin{Thm}
\label{cr2}
Assume that both the Elliott--Halberstam conjecture \ref{EHconjecture} and 
the asymptotic version of the
Hardy--Littlewood conjecture \ref{HLconjecture-asymp} are true. Let $h_1(q)$ and $h_2(q)$ be any two functions tending to zero with increasing $q$.
For any  $\varpi >0$, there exists a constant $C_{\varpi} > 0$, an integer
$B(\varpi) \geq 1$ such that asymptotically there
are
$C_{\varpi} x/(\log x)^{B(\varpi)}$ primes $q\leq x$ for which 
$$ \kappa(q) >  \varpi +h_1(q).$$
Analogously, there exists a constant $D_{\varpi} > 0$, an integer $ A(\varpi) \geq 1$ such that asymptotically there are
$D_{\varpi} x/ (\log x)^{A(\varpi)}$ primes $q\leq x$, for which
$$ \kappa(q)< - \varpi + h_2(q).$$
\end{Thm}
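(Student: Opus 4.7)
The plan is to reduce the theorem to the asymptotic Hardy--Littlewood conjecture applied to a carefully chosen admissible set $\mathcal{A}$ of the right measure, and then to control the residual ``nuisance'' contributions to $w_q$ via a sieve argument modeled on Lemma~\ref{kaya}. Fix $\varpi>0$. By Theorem~\ref{unbounded}, I would select an admissible set $\mathcal{A}=\{a_1,\ldots,a_s\}$ consisting of positive \emph{even} integers with $\mu(\mathcal{A})>2\varpi$; evenness is necessary since $q\ge 3$ is odd, so an odd $a_i$ would make $a_iq+1$ even and composite. This choice fixes $B(\varpi):=s+1$.

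Under Conjecture~\ref{HLconjecture-asymp}, the number of primes $q\in(x/2,x]$ for which $a_1q+1,\ldots,a_sq+1$ are simultaneously prime is asymptotic to $C_{\mathcal{A}}\,x/(\log x)^{s+1}$, with $C_{\mathcal{A}}>0$ the explicit singular-series constant. For each such $q$ and for large $q$, the ``mandatory'' primes $a_iq+1$ all lie below $q^{1+\delta}$, and their joint contribution to $w_q(q^{1+\delta})$ equals
$$
\frac{1}{\log q}\sum_{i=1}^{s}\frac{\log(a_iq+1)}{a_iq+1}=\frac{\mu(\mathcal{A})}{q}+O\!\left(\frac{1}{q\log q}\right).
$$
Combining this with the decomposition \eqref{watowat}, Proposition~\ref{nonsplit} (which bounds $v_q$) and Corollary~\ref{phoneplus} (which, under Conjecture~\ref{EHconjecture}, equates $\kappa(q)$ with $\tfrac{q-1}{2}w_q(q^{1+\delta})+o(1)$ outside a negligible set), the targeted main term is $\mu(\mathcal{A})/2>\varpi$.

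The step I expect to be the main obstacle is a generalization of Lemma~\ref{kaya} from the Sophie Germain case $\mathcal{A}=\{2\}$ to an arbitrary admissible $\mathcal{A}$: I must show that for a suitable $\delta=\delta(\varpi,h_1)>0$ the contribution to $w_q(q^{1+\delta})$ from primes $p\equiv\pm1\pmod{q}$ other than the $a_iq+1$ is $o(1/q)$ on average over the HL-counted $q$. Concretely, I would bound
$$
\sum_{\substack{x/2<q\le x\\ q,\,a_1q+1,\ldots,a_sq+1\text{ prime}}} q\,\Big|w_q(q^{1+\delta})-\frac{1+\delta}{q}\mu(\mathcal{A})\Big|
$$
by applying the Brun and Selberg sieves to the $(s+2)$-tuple $(q,a_1q+1,\ldots,a_sq+1,kq\pm 1)$---the analogues of \eqref{nplus}--\eqref{nminus}---then summing over $k\le q^{\delta}$ using the Mertens-type constant $C_1$ computation already invoked in Lemma~\ref{kaya}. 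This should yield a bound of order $\delta\cdot x/(\log x)^{s+1}$; choosing $\delta$ small enough and absorbing $h_1(q)\to 0$ then isolates a sub-collection of the HL-counted primes of size $\sim C_\varpi\,x/(\log x)^{B(\varpi)}$ on which $\kappa(q)>\varpi+h_1(q)$.

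The companion statement $\kappa(q)<-\varpi+h_2(q)$ is obtained by running the same argument with $a_iq+1$ replaced by $a_iq-1$ throughout; the sieve input is unchanged since \eqref{nminus} already treats the ``$-1$'' side, and the main term becomes $-\mu(\mathcal{A})/2<-\varpi$, so one takes $A(\varpi)=B(\varpi)=|\mathcal{A}|+1$ for the same admissible set and $D_\varpi$ the corresponding singular-series constant.
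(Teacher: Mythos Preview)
Your approach---fix an admissible set $\mathcal A$ with $\mu(\mathcal A)>2\varpi$, apply the asymptotic Hardy--Littlewood conjecture to the tuple $(q,a_1q+1,\ldots,a_sq+1)$, then generalize Lemma~\ref{kaya} to sieve away the nuisance primes $p\equiv\pm1\pmod q$---is essentially the route underlying Proposition~\ref{gen} (which follows Granville's Proposition~4). It is sound and does produce $\sim C_{\mathcal A}\,x/(\log x)^{s+1}$ primes $q$ with $\kappa(q)>\varpi+h_1(q)$.

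The gap is that Theorem~\ref{cr2} asserts an \emph{asymptotic} for the full count of primes $q\le x$ with $\kappa(q)>\varpi+h_1(q)$, not merely the existence of a large subset. Your construction exhibits one such subset, but does not rule out that many further primes---arising from other, possibly shorter, configurations of primes $mq\pm1$---also satisfy the inequality; nor does it explain why your $s+1$ coincides with the correct exponent $B(\varpi)$ (for instance, when $\varpi<1/4$ one must have $B(\varpi)=2$, dictated by $\mathcal A=\{2\}$, regardless of which $\mathcal A$ you happened to pick). The paper proceeds differently: it combines Proposition~\ref{very} (for almost all $q$ there are at most $n-1$ primes $p<q/\delta$ with $p\equiv\pm1\pmod q$) with \eqref{fund} under Elliott--Halberstam, so that outside an exceptional set of size $O(x/(\log x)^{n+1/2})$ one has the exact discrete identity $\kappa(q)=\sum_{m\in M}1/(2m)+O(\delta)$ with $|M|\le n-1$ and $M\subset\{m:0<bm<2/\delta\}$. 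The event $\kappa(q)>\varpi+o(1)$ is then a \emph{finite} union of bounded configurations, and Conjecture~\ref{HLconjecture-asymp} with inclusion--exclusion yields the asymptotic count with the correct $B(\varpi)$, following Croot--Granville's proof of their Theorem~1.4. Your Lemma~\ref{kaya}-type argument supplies the lower half of this picture; to obtain the matching upper bound you would need to bring in Proposition~\ref{very}.
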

The proof of the following corollary is not directly obvious and will be 
given later.
\begin{cor}\label{cr3}
Assume that both the Elliott--Halberstam conjecture \ref{EHconjecture} and 
the asymptotic version of the
Hardy--Littlewood conjecture \ref{HLconjecture-asymp} are true.
Then, for any fixed $\varpi > 0$, one has
$$
|\kappa(q)| <  \varpi,
$$
for all but $ O(x/ (\log x)^{E(\varpi)})$
primes $q \leq x$,  where 
$E(\varpi) = \exp(c^{\varpi})$ and $c>1$ is some constant.
\end{cor}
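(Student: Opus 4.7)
The plan is to deduce Corollary \ref{cr3} from Theorem \ref{cr2} via a short limiting argument, and then to identify the exponent $E(\varpi)$ by unpacking the proof of that theorem.

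First I would fix $\varpi > 0$, pick $\varpi' \in (0, \varpi)$ (say $\varpi' = \varpi/2$), and let $h_1, h_2$ tend monotonically to zero (for instance $h_i(q) = 1/\log_2 q$). For all sufficiently large primes $q$ one has $h_i(q) < \varpi - \varpi'$, so $\kappa(q) > \varpi$ implies $\kappa(q) > \varpi' + h_1(q)$, and likewise for $\kappa(q) < -\varpi$. Theorem \ref{cr2} therefore gives
$$\#\{q \leq x : |\kappa(q)| > \varpi\} \ll \frac{x}{(\log x)^{\min\{A(\varpi'),\, B(\varpi')\}}},$$
reducing Corollary \ref{cr3} to an explicit lower bound on $\min\{A(\varpi'), B(\varpi')\}$ of the form $\exp(c^\varpi)$.

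Second, I would trace through the proof of Theorem \ref{cr2}. By the decomposition \eqref{watowat} together with Corollary \ref{phoneplus}, the inequality $\kappa(q) > \varpi'$ forces the existence of an admissible set $\AA \subset 2\N$ with $\mu(\AA) > 2\varpi'$ such that $aq + 1$ is prime for every $a \in \AA$, and symmetrically for $\kappa(q) < -\varpi'$ with $aq - 1$. The asymptotic Hardy--Littlewood conjecture \ref{HLconjecture-asymp} then supplies $\sim C_\AA x/(\log x)^{|\AA|+1}$ such primes, so $B(\varpi')$ can be taken to be $k(\varpi') + 1$, where $k(\varpi')$ denotes the smallest cardinality of an admissible subset of $2\N$ whose measure exceeds $2\varpi'$.

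The main obstacle, and the substantive content of the corollary, is to bound $k(\varpi)$ from below by $\exp(c^\varpi)$ for some absolute constant $c > 1$. A naive set such as $\{2, 4, \dots, 2N\}$ already fails admissibility at the prime $3$, so one must appeal to the refined constructions underlying Theorem \ref{unbounded}: at each small prime $p$ the constraint $\omega(p) < p$ forces additional even integers to be adjoined to the set, and these constraints compound multiplicatively across the small primes, producing a doubly-exponential growth in the cardinality required to reach a prescribed measure. Once this combinatorial lower bound on $k(\varpi)$ is in place, combining it with the two preceding paragraphs yields the desired bound $O(x/(\log x)^{E(\varpi)})$ with $E(\varpi) = \exp(c^\varpi)$.
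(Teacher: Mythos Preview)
Your first two paragraphs correctly reduce the corollary to a lower bound of the form $B(\varpi') \geq \exp(c^{\varpi})$ on the exponent from Theorem~\ref{cr2}. The gap lies in the third paragraph. Your invocation of Theorem~\ref{unbounded} points in the wrong direction: that result constructs admissible sets of arbitrarily large measure, whereas here you need the converse, namely an \emph{upper} bound on $\mu(M)$ for admissible $M$ of prescribed cardinality. The phrase ``constraints compound multiplicatively across the small primes, producing a doubly-exponential growth'' is not an argument; to turn the admissibility route into a proof you would still need a sieve estimate of the type $|M\cap[1,T]|\ll T/\log T$ for admissible $M$, and you do not supply one.

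The paper bypasses admissibility entirely. Since each $m\in M$ has $mq+b$ prime, the Brun--Titchmarsh inequality gives $\#\{m\in M:m\le T\}\le \pi(Tq;q,b)\ll T/\log T$, so the $k$-th smallest element of $M$ is $\gg k\log k$. Partial summation then yields
\[
\sum_{m\in M}\frac{1}{m}\ \ll\ \sum_{k\le |M|}\frac{1}{k\log k}\ \ll\ \log\log |M|\ \ll\ \log\log E(\varpi),
\]
which combined with \eqref{est2} gives $2\varpi \ll \log\log E(\varpi)$, hence $E(\varpi)\ge\exp(c^{\varpi})$. The doubly-exponential growth thus comes directly from the sparsity of primes in an arithmetic progression, not from the combinatorics of admissible tuples.
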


One can also wonder how large $\kappa(q)$ can be as a function of $q$.
This is rather outside the realm of the current tools in analytic
number theory and is discussed in
Section \ref{sec:spec}.

The proofs of Theorem \ref{cr2},  Corollary 
\ref{cr3} and Theorem \ref{cr5} 
require several ingredients and hence 
we will present them later in this section.
We start with our first ingredient.

\begin{prop}\cite[Proposition~5.1\,(i)]{CG}\label{very}
Fix an integer $n \geq 2$ and an arbitrarily small $\delta > 0$.
For all but $O(x/(\log x)^{n+ 1/2})$ of the primes
$x < q \leq 2x$,
there are no more than $n-1$ primes $< q/\delta$ which are $\equiv \pm 1 \pmod*{q}.$
\end{prop}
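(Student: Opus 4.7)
The plan is to bound the number of exceptional primes via an $n$-th binomial moment inequality combined with an upper bound sieve for simultaneous primality of several linear forms. First I would parametrize: every prime $p < q/\delta$ with $p \equiv \pm 1 \pmod q$ admits a unique representation $p = mq + \epsilon$ with $(m, \epsilon) \in \{1, \ldots, M\} \times \{-1, +1\}$, where $M = \lfloor 1/\delta \rfloor$. Writing $N_q$ for the total number of such primes, the trivial bound $\binom{N_q}{n} \geq 1$ whenever $N_q \geq n$ yields
\[
\#\{q \in (x, 2x] \text{ prime} : N_q \geq n\} \leq \sum_{x < q \leq 2x} \binom{N_q}{n},
\]
and the right-hand side equals, up to the factor $1/n!$, the number of ordered selections of $n$ distinct pairs $(m_i, \epsilon_i)$ together with a prime $q \in (x, 2x]$ such that $q$ and each $m_i q + \epsilon_i$ are simultaneously prime.

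Next I would apply the Selberg upper bound sieve (in the form of Halberstam--Richert, Theorem~5.7 of \cite{Hal}, already used in the proof of Lemma~\ref{kaya}) to each collection of $n+1$ linear forms $X, m_1 X + \epsilon_1, \ldots, m_n X + \epsilon_n$. Provided the tuple is admissible, the sieve yields an upper bound of the shape
\[
\ll_n \frac{x}{(\log x)^{n+1}} \prod_{\ell \mid \Delta} \Bigl(1 + \frac{c}{\ell}\Bigr),
\]
where $\Delta$ is a product depending on the $m_i$ and the pairwise differences $|m_i \epsilon_j - m_j \epsilon_i|$, all of which are $O_\delta(1)$. Non-admissible tuples can be discarded from the sum, since for those some fixed small prime divides one of the forms identically, forcing the count of valid $q$'s to be $O(1)$.

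Finally, since the number of ordered $n$-tuples $((m_i, \epsilon_i))_{i=1}^n$ is bounded by $(2M)!/(2M-n)! = O_{n, \delta}(1)$ and each singular series factor is $O_{n, \delta}(1)$, summing gives
\[
\sum_{x < q \leq 2x} \binom{N_q}{n} \ll_{n, \delta} \frac{x}{(\log x)^{n+1}} \leq \frac{x}{(\log x)^{n + 1/2}},
\]
which is the claimed estimate (and is in fact slightly sharper, which explains why Croot and Granville state it with the weaker exponent $n + 1/2$ rather than $n+1$). The main technical point is the admissibility verification: because all parameters are bounded in terms of $\delta$, only finitely many primes $\ell$ need to be checked, and contributions from genuinely non-admissible tuples are negligible. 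Everything else is a routine application of the sieve, so the only real obstacle is the bookkeeping required to handle both signs $\epsilon_i = \pm 1$ uniformly and to verify that the singular series stays bounded across all tuples.
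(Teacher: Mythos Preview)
The paper does not supply its own proof of this proposition; it is quoted verbatim from Croot--Granville \cite[Proposition~5.1\,(i)]{CG}, with only the remark that the Elliott--Halberstam hypothesis is not needed for part~(i). Your sketch is correct and is precisely the standard argument (and the one in \cite{CG}): bound the number of exceptional $q$ by the $n$-th binomial moment $\sum_q \binom{N_q}{n}$, expand this as a sum over $n$-tuples of pairs $(m_i,\epsilon_i)$ with $m_i\le 1/\delta$, and apply the upper-bound sieve to each system of $n+1$ linear forms, the singular series and the number of tuples both being $O_{n,\delta}(1)$. Your observation that this actually yields the exponent $n+1$ rather than $n+\tfrac12$ is correct; the weaker exponent in the stated form is there only for uniformity with part~(ii) of \cite[Proposition~5.1]{CG}.
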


This result was stated in  \cite{CG} assuming the 
truth of the Elliott--Halberstam conjecture \ref{EHconjecture}; 
but, inspecting the proof, it is clear that this hypothesis is only needed 
into the second part of \cite[Proposition~5.1]{CG}, while the first one is unconditional. 
For this reason we stated Proposition \ref{very} 
without assuming the  Elliott--Halberstam conjecture \ref{EHconjecture}.

We need the next proposition in order to prove Theorem \ref{cr5}.

\begin{prop}\label{imp}
Fix an integer $n \geq 2$ and select $\delta > 0$ arbitrarily small. 
For all but $O(x/(\log x)^{n+ 1/2})$ of the primes
$x < q \leq 2x$, we have, as $q\to \infty$,
\[
\Big\vert \kappa(q) - \frac{q-1}{2} w_q \Bigl(\frac{q}\delta \Bigr) \Big \vert \leq 3 + O(\delta).
\]
In addition, under the Elliott--Halberstam conjecture \ref{EHconjecture}, 
we can replace $3 + O(\delta)$ with $O(\delta)$.
\end{prop}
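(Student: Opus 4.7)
The plan is to start from the decomposition dictated by \eqref{watowat}, namely
\[
\kappa(q) - \frac{q-1}{2} w_q(q/\delta) = \frac{q-1}{2} v_q + \frac{q-1}{2}\bigl[w_q - w_q(q/\delta)\bigr],
\]
and control the two summands separately. The $v_q$-term is negligible by Proposition \ref{nonsplit}: one has $\frac{q-1}{2}|v_q| = o(1)$ for all but $O(\sqrt{x}(\log x)^2)$ primes $q \in (x, 2x]$, which is absorbed into the $O(\delta)$ error for sufficiently large $q$ in the good set.

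For the main term I would introduce the intermediate cutoff $q^{1+\delta}$ and split
\[
w_q - w_q(q/\delta) = \bigl[w_q - w_q(q^{1+\delta})\bigr] + \bigl[w_q(q^{1+\delta}) - w_q(q/\delta)\bigr].
\]
Proposition \ref{goneplus} already controls the first bracket: $\frac{q-1}{2}|w_q - w_q(q^{1+\delta})| \leq 3 + o(1)$ outside an exceptional set of size $O(x/(\log x)^B)$ unconditionally, and $o(1)$ under the Elliott--Halberstam conjecture \ref{EHconjecture}. This is exactly the source of the constant $3$ in the main statement and of the sharper bound in the conditional clause. The second bracket represents the contribution of primes $p \equiv \pm 1 \pmod{q}$ in the interval $(q/\delta, q^{1+\delta}]$, and is the new quantity to estimate. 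I would treat it by Riemann--Stieltjes integration by parts with respect to $E(u;q) = \pi(u;q,1) - \pi(u;q,-1)$, combining the Brun--Titchmarsh Classical Theorem \ref{BT-thm} on the integrand with the boundary estimate $|E(q/\delta; q)| \leq n-1$ furnished by Proposition \ref{very}, which is precisely where the exponent $n+1/2$ of the exceptional set in the statement enters.

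The main obstacle is to certify that the intermediate range $(q/\delta, q^{1+\delta}]$ really contributes only $O(\delta)$ and not a $\log_2 q$-sized term, which a naive application of Brun--Titchmarsh to $|E(u;q)|$ over the whole interval would yield because of the logarithmic divergence of $\int \log u / (u \log(u/q))\, du$ at the left endpoint $u = q/\delta$. The fix is to use the sharp boundary bound $|E(q/\delta; q)| \leq n-1$ from Proposition \ref{very} to tame this divergence, in the same spirit as the critical-range analysis performed in \eqref{wqmax}--\eqref{hooleyapp} within the proof of Proposition \ref{goneplus}. Under the Elliott--Halberstam conjecture \ref{EHconjecture}, the first bracket becomes $o(1)$ by the conditional half of Proposition \ref{goneplus}, whereas the second bracket remains $O(\delta)$, producing the stronger bound stated.
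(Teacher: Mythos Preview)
Your overall decomposition matches the paper's: both route through Corollary \ref{phoneplus} (equivalently, Propositions \ref{nonsplit} and \ref{goneplus}) to handle the $v_q$-term and the tail $w_q - w_q(q^{1+\delta})$, and both are left with the intermediate range $(q/\delta,\,q^{1+\delta}]$. The difference, and the gap in your proposal, lies entirely in how that intermediate range is treated.

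Your plan there does not close. After integration by parts, the boundary term at $u=q/\delta$ is indeed $O(\delta)$ thanks to the bound $|E(q/\delta;q)|\le n-1$ from Proposition \ref{very}, but the \emph{integral} term is the real obstruction, and the boundary information does nothing for it. Applying the classical Brun--Titchmarsh inequality inside the integral gives
\[
\frac{q-1}{2\log q}\int_{q/\delta}^{q^{1+\delta}} \frac{\log u}{u\log(u/q)}\,\frac{du}{q-1}
\;\asymp\;
\frac{1}{\log q}\int_{\log(1/\delta)}^{\delta\log q}\frac{\log q+v}{v}\,dv
\;\asymp\; \log_2 q,
\]
so the $\log_2 q$ loss survives. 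Your diagnosis that the trouble is a ``divergence at the left endpoint $u=q/\delta$'' is not quite right: at $u=q/\delta$ one has $\log(u/q)=\log(1/\delta)$, which is a fixed positive constant; the $\log_2 q$ growth comes from the \emph{upper} part of the range, where $\log(u/q)$ runs up to $\delta\log q$. A pointwise bound at the single value $u=q/\delta$ cannot tame that. The comparison with \eqref{wqmax}--\eqref{hooleyapp} is also not apt: there the range is $(q^{1+\delta},q^{2+\delta/2}]$, Hooley's almost-all Brun--Titchmarsh \cite[Corollary~3.2]{CG} applies, and the outcome is the constant $6$, not an $O(\delta)$.

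The paper's route is different and shorter: it simply bounds
\[
\bigl|w_q(q^{1+\delta})-w_q(q/\delta)\bigr|
\le (1+\delta)\!\!\sum_{\substack{q/\delta<p\le q^{1+\delta}\\ p\equiv\pm1\pmod q}}\frac{1}{p}
\]
and then invokes \cite[Proposition~5.1(ii)]{CG}, which asserts directly that this prime sum is $\ll \delta/q$ for all but $O(x/(\log x)^{n+1})$ primes $q\in(x,2x]$. That Croot--Granville result is proved by an averaging/sieve argument over $q$, not by a pointwise Brun--Titchmarsh bound, and it is the missing ingredient in your sketch. If you want to avoid citing \cite[Proposition~5.1(ii)]{CG}, you would need to reproduce such an averaging argument; integration by parts plus the classical Brun--Titchmarsh theorem and the single-point information from Proposition \ref{very} are not enough.
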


\begin{proof}
The result follows from Corollary \ref{phoneplus}, provided
we can show that $w_q(q^{1+\delta})$ can be replaced
by $w_q(q/\delta)$ at the cost of an error $O(\delta)$.
Thus, it is enough to show that,
as $q$ tends to infinity,
\[ 
\bigl\vert w_q(q^{1+\delta})-w_q(\frac{q}{\delta}) \bigr \vert
\le
\frac{1}{\log q}\sum_{\substack{q/\delta < p \le q^{1+\delta} \\ p\equiv \pm 1 \pmod*{q}}}\frac{\log p}{p}
\ll \frac{\delta}{q},
\]
for all but $O(x/(\log x)^{n+ 1})$ of the primes
$x < q \leq 2x$. Since
\[ 
\frac{1}{\log q}\sum_{\substack{q/\delta < p \le q^{1+\delta} \\ p\equiv \pm 1 \pmod*{q}}}\frac{\log p}{p} \le
(1+\delta) \sum_{\substack{q/\delta < p \le q^{1+\delta} \\ p\equiv \pm 1 \pmod*{q}}}\frac{1}{p},
\]
it suffices to show that the right hand side is $\ll \delta /q$
for all but $O(x/(\log x)^{n+1})$ of the primes
$x < q \leq 2x$. 
{But this is precisely what is proved in \cite[Proposition~5.1\,(ii)]{CG}.}  
\end{proof}

Before proceeding further with the proofs of our results,
we need a deeper study of $w_q(q/\delta)$.
Hence, recalling that $b\in\{-1,1\}$, we proceed now to relate
$w_q(q/\delta)$ with the measure of the set 
$M=M(b,\delta)$ of integers $m$ for which there exists a prime 
$p$ of the form $p=mq+b$ and  with $0< bm< 2/\delta$.
Clearly every prime $p< q/\delta$ satisfying $p \equiv \pm 1 \pmod*{q}$ 
is either of the form $mq+1$ with
$0 < m < 2/\delta$, or of the form $-mq - 1$ with $-2/\delta < m < 0$.
Recall that
\begin{equation}
 \quad w_q(x)  
= \frac{1}{\log q} \Bigl(\sum_{\substack{p\le x\\ p\equiv 1 \pmod*{q}}}\!\! \frac{\log p}{p}
-\sum_{\substack{p\le x\\p\equiv -1 \pmod*{q}}} \!\! \frac{\log p}{p}\Bigr)\notag.
\end{equation}
We have, as $ q \to \infty$,
\begin{align}
\notag
 \notag (q-1)w_q\Bigl(\frac{q}{\delta}\Bigr) & \leq 
\frac{q-1}{\log q} \Bigl(\sum_{\substack{0 < m < 2/\delta\\ m\in M}}\frac{\log (q/\delta)}{mq+1}
+\sum_{\substack{-2/\delta < m < 0\\ m\in M}} \frac{\log (2q-1)}{mq+1}\Bigr)
= \sum_{m \in M}  \frac{1}{m} + o(1) 
\end{align}
and 
\begin{align}
\notag
 \notag (q-1)w_q\Bigl(\frac{q}{\delta}\Bigr) & \geq 
\frac{q-1}{\log q} \Bigl(\sum_{\substack{0 < m < 2/\delta\\ m\in M}}\frac{\log (2q+1)}{mq+1}
+\sum_{\substack{-2/\delta < m < 0\\ m\in M}} \frac{\log (q/\delta)}{mq+1}\Bigr)
= \sum_{m \in M}  \frac{1}{m} + o(1),
\end{align}
and hence
\begin{align}
\notag(q-1)w_q\Bigl(\frac{q}{\delta}\Bigr)
& =  \sum_{m \in M}  \frac{1}{m} + o(1).
\end{align}

Using this estimate and Proposition \ref{imp}, for fixed $n \geq 2$ and $\delta > 0$ arbitrary small, for all but $O(x/(\log x)^{n+ 1/2})$ of the primes
$x < q \leq 2x$, we have
\begin{equation} \label{funda}
\Big \vert \kappa(q) - \sum_{m \in M} \frac{1}{2m} \Big \vert \leq 3 + O(\delta),\quad 
\textrm{as}\ q\to \infty.
\end{equation} 
In addition, assuming the Elliott--Halberstam conjecture \ref{EHconjecture}, for fixed $n \geq 2$ and $\delta > 0$ arbitrary small, for all but $O(x/(\log x)^{n+ 1/2})$ of the primes
$x < q \leq 2x$, we have
\begin{equation} \label{fund}
\kappa(q)  = \sum_{m \in M} \frac{1}{2m} + O(\delta),
\quad 
\textrm{as}\ q\to \infty.
\end{equation} 

We are now ready to prove the following result which also expresses, under the assumption
of the Elliott--Halberstam conjecture \ref{EHconjecture}, that the asymptotic behavior $\kappa(q) \sim b/(2m)$
as $q$ tends to infinity,
where  $b\in\{-1,1\}$ and $m>0$ even are both fixed integers,  
is equivalent to the existence of infinitely many primes of the form $mq + b$.

\begin{Thm}\label{cr4}
Assume that the Elliott--Halberstam conjecture \ref{EHconjecture} holds,
let $m$ be an even positive integer, and
$b \in \{-1, 1\}$. There are $\gg_m x/ (\log x)^2$ primes $ q \leq x$ for which $mq + b $ is also prime if
and only if $$\kappa(q) \sim \frac{b}{2m}, 
\quad \textrm{as} \ q\to \infty.$$
\end{Thm}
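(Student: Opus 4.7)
The plan is to prove both implications by combining the conditional asymptotic \eqref{fund} with the sieve bound in Proposition \ref{very} and the structural Lemma \ref{CGlem} for sums of reciprocals. As a setup I would recall that, under EH and for any fixed $\delta > 0$, the identity
\[
\kappa(q) = \sum_{m' \in M(q,\delta)} \frac{1}{2m'} + O(\delta)
\]
holds for all but $O(x/(\log x)^{n+1/2})$ primes $x < q \le 2x$, where $M(q, \delta) \subset \mathbb{Z} \setminus \{0\}$ collects the integers $m'$ with $|m'| < 2/\delta$ for which $m'q + \mathrm{sgn}(m')$ is prime. In this convention, the primality of $mq + b$ is equivalent to $bm \in M(q, \delta)$ (when $\delta < 2/m$), and this element contributes exactly $b/(2m)$ to the sum.

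For the direction $(\Rightarrow)$, I would fix a small $\delta < 2/m$ and invoke Proposition \ref{very} with $n = 2$: outside an exceptional set of size $O(x/(\log x)^{5/2})$, each prime $q$ has at most one prime $< q/\delta$ congruent to $\pm 1 \pmod q$. Among the $\gg_m x/(\log x)^2$ primes with $mq + b$ prime, all but $o(x/(\log x)^2)$ of them avoid this exceptional set, and for such a prime the unique relevant congruence must be realised by $mq+b$ itself, so $M(q,\delta) = \{bm\}$ and \eqref{fund} yields $\kappa(q) = b/(2m) + O(\delta)$. A standard diagonal argument, letting $\delta$ decrease to zero along a sequence, then extracts a subset of our primes, still of density $\gg_m x/(\log x)^2$, along which $\kappa(q) \to b/(2m)$.

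For the direction $(\Leftarrow)$, I would set $r = b/m$ and apply Lemma \ref{CGlem}(i) to obtain $\nu(r) > 0$ and $n(r) \in \mathbb{N}$ with the stated rigidity property. Next, choosing $\delta$ small enough that the $O(\delta)$ error in \eqref{fund} is smaller than $\nu(r)$, and applying Proposition \ref{very} with $n = n(r) + 1$ (whose exceptional set is $O(x/(\log x)^{n(r)+3/2}) = o(x/(\log x)^2)$), I conclude that for the primes $q \in Q$ that remain, $\sum_{m' \in M(q,\delta)} 1/m'$ is a sum of at most $n(r)$ reciprocals lying within $\nu(r)$ of $r$, so by Lemma \ref{CGlem}(i) it equals $r$ exactly. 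Since the possible configurations $M(q, \delta)$ form a finite collection (their elements all satisfy $|m'| < 2/\delta$), a pigeonhole argument would isolate a specific $M^* \subset \mathbb{Z}^*$ with $\sum_{m' \in M^*} 1/m' = r$ such that $M(q,\delta) = M^*$ for $\gg_m x/(\log x)^2$ primes $q$. For each such $q$ the $|M^*|$ linear forms $\{m'q + \mathrm{sgn}(m') : m' \in M^*\}$ are simultaneously prime, and a standard Brun or Selberg sieve bound, in the spirit of \eqref{nplus}--\eqref{nminus}, gives at most $O_{M^*}(x/(\log x)^{|M^*|+1})$ such primes. The case $|M^*| = 0$ is excluded by $r \ne 0$, and the case $|M^*| \ge 2$ gives only $O(x/(\log x)^3) = o(x/(\log x)^2)$ primes, so $|M^*| = 1$; the single element must equal $bm$ because $1/m' = b/m$, yielding $mq + b$ prime for $\gg_m x/(\log x)^2$ primes $q$, as required.

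I expect the main obstacle to lie in the backward direction: \emph{a priori} the underlying identity $\sum 1/m' = b/m$ admits several decompositions into signed unit fractions (e.g., $1/4 = 1/6 + 1/12$), so the hypothesis $\kappa(q) \sim b/(2m)$ does not immediately pin down the prime pattern near $q$. Lemma \ref{CGlem} would confine the relevant decompositions to a finite list, after which the decisive input is the sieve estimate: any decomposition with $|M^*| \ge 2$ imposes two or more independent simultaneous primality constraints, which can hold on at most $O(x/(\log x)^3)$ primes—too sparse to account for the hypothesized $\gg_m x/(\log x)^2$ primes—thereby leaving only the singleton decomposition $M^* = \{bm\}$.
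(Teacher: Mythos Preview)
Your forward direction $(\Rightarrow)$ matches the paper's proof exactly: take $n=2$ in Proposition \ref{very}, so that $|M(q,\delta)|\le 1$ outside an exceptional set of size $O(x/(\log x)^{5/2})$, and conclude $\kappa(q)=b/(2m)+O(\delta)$ for the surviving primes with $mq+b$ prime.

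Your backward direction $(\Leftarrow)$ is correct but takes a longer route than the paper. The paper applies Proposition \ref{very} with $n=2$ for \emph{both} directions, so that already $|M(q,\delta)|\le 1$ for non-exceptional $q$. Then \eqref{fund} gives $\kappa(q)$ within $O(\delta)$ of either $0$ or a single value $1/(2m_0)$ with $|m_0|<2/\delta$; since the target $b/(2m)$ is a specific nonzero unit fraction and the finitely many candidates $\{1/(2m'):0<|m'|<2/\delta\}$ are isolated, the hypothesis $\kappa(q)\to b/(2m)$ forces $m_0=bm$ directly, hence $mq+b$ is prime. Your appeal to Lemma \ref{CGlem}(i), the pigeonhole over configurations $M^*$, and the sieve bound excluding $|M^*|\ge 2$ are all valid, but they become redundant once $|M|\le 1$ is secured from the outset. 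The machinery you deploy is the one genuinely needed for Theorem \ref{cr1}, where the limit can be an arbitrary rational and multi-term decompositions must be controlled; here the target is itself a single signed unit fraction, so the singleton case is automatic.
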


\begin{proof}
We follow the proof of Theorem 1.7 in \cite{CG},
and take $n=2$ in Proposition \ref{very}. This result guarantees that
for small $\delta > 0$, for all but $O( x /(\log x)^2)$ primes $x < q \leq 2x$, there is at most 
one prime $p_0$ of the form  $p_0=m_0q + b$ with 
$0 < m_0 < 2/\delta$ if $b=1$, or with $-2/\delta < m_0 < 0$ if $b=-1$.
In other words, the two conditions can be grouped together into $0<bm_0<2/\delta$.
Let $\Delta_0=1$ if $p_0$ exists and $\Delta_0=0$ otherwise.
Recalling \eqref{fund} and the definition of the set $M$, we have
$$
\kappa(q)  = \sum_{m \in M} \frac{1}{2m} + O(\delta) 
= 
\sum_{\substack{0 < bm < 2/\delta \\ mq+b ~ \text{is prime}}} \frac{b}{2m} + O(\delta)
=
\frac{b\Delta_0}{2m_0} + O(\delta),
$$ 
as $q$ tends to infinity.
Letting  $\delta$ tend to zero and $x$ 
to infinity, we have the result.
\end{proof}

Theorem \ref{cr4} shows a conjectural symmetrical distribution
of $\kappa(q)$ with respect to $b$ for primes congruent to $b\in\{-1,1\}$ modulo $q$.
This is consistent with the distribution of the values of $\kappa(q)$ shown 
into the histograms of Section \ref{sec:numerical};
on the other hand, the conjectural symmetrical distribution of the values of  $r(q)$ 
is provided by \cite[Theorem~1.7]{CG} (the source of
inspiration for Theorem \ref{cr4}).

We are now in the position to prove Theorems \ref{cr5} and \ref{cr2}, and Corollary \ref{cr3}.

\begin{proof}[Proof of Theorem \ref{cr5}]
If 
$\kappa(q) > 3 + \epsilon$ for $\gg x/ (\log x)^A $ primes $ q \leq x$ for all
sufficiently large $x$, for some $A\ge 1$, then using Proposition \ref{imp}, for
sufficiently small positive $\delta$, we have 
$$ 
0< 2\epsilon  +O(\delta) \leq   (q-1) w_q(q/\delta).
$$
It follows from this estimate that there exists an integer 
$0< m < 2/\delta$ for which there are
$\gg x/ (\log x)^A $ primes $q \leq x$ with $mq + 1$ also prime.
If 
$-\kappa(q) > 3 + \epsilon$ for $\gg x/ (\log x)^A $ primes $ q \leq x$ for all
sufficiently large $x$, for some $A\ge 1$, then using Proposition \ref{imp}, for sufficiently
small $\delta > 0$, we have 
$$ 
(q-1) w_q(q/\delta) \leq - 2\epsilon  + O(\delta)< 0.
$$
We infer that there exists an integer 
$0<m < 2/\delta$ for which there are
$\gg x/ (\log x)^A $ primes $q \leq x$ for which $mq - 1$ is also prime.
\end{proof}

\begin{proof}[Proof of Theorem \ref{cr2}]
The goal is to determine how often $b \kappa(q) >  b\varpi +o(1)$ for $b\in\{-1,1\}$.
Following the proof of \cite[Theorem~1.4]{CG}, we select $n= B(b,\varpi)$ and $\delta$ sufficiently small in Propositions \ref{very} and \ref{imp}.  
Assume $b\kappa(q) >  b\varpi +o(1)$ for some set of primes $q$ in $(x, 2x].$ 
Using \eqref{fund}, for all but $O(x/(\log x)^{B(b,\varpi)})$ of the primes $x < q \leq 2x$, we have no more than $B(b,\varpi)-1$ primes
of the form $ mq + b$ with $0 < bm < 2/\delta$  and
\begin{equation}\label{est2}
    \sum_{\substack{0 < bm < 2/\delta \\ mq+b ~ \text{is prime}}} \frac{1}{m} \geq 2\varpi.
\end{equation}
By the Hardy--Littlewood conjecture \ref{HLconjecture-asymp},
this set of primes has cardinality $\sim C_{b,\varpi} x/(\log x)^{B(b,\varpi)}$ 
for some constant $C_{b,\varpi}>0$. This completes the proof.
\end{proof}

\begin{proof}[Proof of Corollary \ref{cr3}]

If $ 0 < 2 \varpi \leq 1$, then $E(\varpi) = 2$. If $E(\varpi) = N$, then 
$ 2 \varpi \leq \sum_{n=1}^{N} 1/n \leq \log N+1.$ This implies that
$ E(\varpi) = N \geq \exp(2\varpi -1)$ and hence $E(\varpi)$ 
is a non-decreasing function. By the partial summation formula, we have
$$
\sum_{\substack{0 < bm < 2/\delta \\ mq+b ~ \text{is prime}}} \frac{1}{m} \ll \log \log  E(\varpi).
$$
Combining this estimate with \eqref{est2}, we have
$ 2 \varpi \ll \log \log E(\varpi).$ 
So $E(\varpi) \geq \exp(c^{\varpi})$ for some constant $c>1$.
\end{proof}

It remains to prove  Theorems \ref{cr1} and \ref{noEHproof}.

\begin{proof}[Proof of Theorem \ref{cr1}]
Let us first consider the case when $\varpi$ is irrational.
Take $n+1/2 > A$ and $\delta >0$ sufficiently small in
\eqref{fund} and Lemma \ref{CGlem}\,(ii). 
It follows that for all, but at most $O(x/(\log x)^{A})$ of the primes
$x < q \leq 2x$, we have
\begin{equation}\label{eq133}
\kappa(q)  = \sum_{m \in M} \frac{1}{2m} + O(\delta),
\end{equation} 
as $q$ tends to infinity. If $\kappa(q) \sim \varpi$, as $q$ 
runs through this set of primes, then \eqref{eq133} implies that
$$
\Big \vert 2 \varpi -  \sum_{m \in M} \frac{1}{m} \Big \vert \ll \delta, 
$$
contradicting Lemma \ref{CGlem}(ii).

The proof for the rational case can be carried out by mimicking  the proof of \cite[Theorem~1.3\,(i)]{CG}. 
Here one simply replaces the variable $p$ with $q$ and the expression $pf_p$ with $\kappa(q).$
\end{proof}

\begin{proof}[Proof of Theorem \ref{noEHproof}]
As stated in Theorem \ref{noEHproof}, let $M$ be a set of $k$ distinct positive integers with 
$\Sigma_{m \in M} 1/m > 6$, such that there are 
$\gg x/(\log x)^{k+1}$ primes $q\leq x$ for which $mq \pm 1$ is also prime for every $m \in M$
(in fact, the existence of such sets depends on Theorem \ref{unbounded} and Conjecture \ref{HLconjecture}).
Then, taking $n=k+1$ in equation \eqref{funda}, for sufficiently small $\epsilon>0$, we obtain
$$
\vert \kappa(q) \vert > \epsilon ,
$$
for $\gg x/(\log x)^{k+1}$ primes $q\leq x$. 
\end{proof}

We end this section by elaborating on the link of these results with those
in \cite{CG} about $r(q)$.
Recall Definition \ref{fqdefi} of $f_q$. From \cite[\S6]{CG}, 
under the same hypothesis and for the same set of primes, we have, as $q\to \infty$,
$$
q f_q = \sum_{m \in M} \frac{1}{m} + O(\delta).
$$ 
Combining this estimate with \eqref{fund},
it is clear that assuming the 
Elliott--Halberstam conjecture \ref{EHconjecture}, for fixed $n \geq 2$, 
for all but $O(x/(\log x)^{n+ 1/2})$ of the primes
$x < q \leq 2x$, we have
\begin{equation}\label{kapparzer0}
   r(q) - \kappa(q) = o(1),
\end{equation}
in line with
Figure \ref{fig6}.

\section{Speculations}
\label{sec:spec}
It seems very plausible that $r(q)$ is close to $\kappa(q)$ for a
wide range of prime numbers $q$. We have not been able to
prove a result in this direction beyond the modest and
conditional Propositions \ref{mainprop}-\ref{gen} and equation \eqref{kapparzer0}.
But, following the heuristic argument in  \cite[\S9]{Gr}, we 
can obtain some conjectural result. First, we have to recall that 
Granville assumed that 
two very strong hypotheses on the size of the error
term for the Prime Number Theorem in arithmetic progressions
and on a Brun--Titchmarsh type estimate both hold; namely
 that for $a\in\{-1,1\}$ one has 
\begin{equation}\label{strong1}
\pi(x;q,a)-\frac{\pi(x)}{\varphi(q)} 
\ll \sqrt{\frac{x}{q}}\exp\bigl(\sqrt{\log x}\,\bigr) \quad\textrm{for}\ x> q,
\end{equation}
which can be seen as a corrected version of a conjecture of Montgomery,
see \cite[eq.~(15.9), p.~136]{Montgomery} and Friedlander--Granville
\cite{FG1989},
and
\begin{equation}\label{strong2}
\pi(x;q,a) \ll \frac{x}{q\log x} \quad\textrm{for}\ x> q(\log q)^3.
\end{equation}

Using \eqref{strong1} for $x> q\exp\bigl(3\sqrt{\log q}\, \bigr)$,
\eqref{strong2} for $q(\log q)^3< x \le q\exp\bigl(3\sqrt{\log q}\, \bigr)$
and, in the remaining range, a conjectural form of
the Brun--Titchmarsh Classical Theorem \ref{BT-thm} 
in which the constant $2$ is replaced by $1+\epsilon$,
Granville  \cite[\S9]{Gr} was led to the following speculation.

\begin{Speculation}
\label{Speculation1}
We have, as $q$ tends to infinity,
\begin{equation*}
\bigl(-1+o(1)\bigr) \log_3 q \le  2r(q)\le \bigl(1+o(1)\bigr) \log_3 q.
\end{equation*}
Both bounds are best possible in the sense that they are attained
for some infinite sequence of primes $q$.
\end{Speculation}
We will now indicate what the same line of reasoning gives for
$\kappa(q)$.
Proposition \ref{nonsplit} implies that $qv_q \ll 1$ as $q$ tends to infinity.
Using 
the partial summation formula and a conjectural form of
the Brun--Titchmarsh Classical Theorem \ref{BT-thm} 
in which the constant $2$ is replaced by $1+\epsilon$,
we deduce that 
$$
\Big \vert \frac{q-1}{2} w_q( q (\log q)^3) \Big \vert \leq \Bigl(\frac{1}{2} +\epsilon \Bigr)\log_3 q + O(1),
$$ 
for any arbitrary 
small $\epsilon >0$. Following  the argument in \cite[\S9]{Gr}, we can easily 
obtain that $q(w_q - w_q( q (\log q)^3)) \ll 1$, 
assuming conjectures \eqref{strong1} and \eqref{strong2}.
Summarizing, we arrive at the following speculation analogous to Speculation \ref{Speculation1}.
\begin{Speculation}
\label{spec2}
We have, as $q$ tends to infinity,
\begin{equation*}
\bigl(-1+o(1)\bigr) \log_3 q \le 2\kappa(q) \le \bigl(1+o(1)\bigr) 
\log_3 q.
\end{equation*}
Both bounds are best possible in the sense that they are attained
for some infinite sequence of primes $q$.
\end{Speculation}

These heuristics in the setting of $\gamma_q$ led
Moree \cite{Msurvey} to the following speculation.
\begin{Speculation}[Moree]
We have, as $q$ tends to infinity,
\begin{equation}\label{FLMbound}
\notag
\frac{\gamma_q}{\log q}\ge \bigl(-1+o(1)\bigr) \log_3 q.
\end{equation}
This bound is best possible in the sense that there exists an infinite 
sequence of primes for which it is attained.
\end{Speculation}

We explain now how such an heuristics works for $r(q) - \kappa(q)$.
Evaluating this quantity means, as is 
clear from looking at \eqref{watowat}-\eqref{beta-def} and the  proof of
Theorem 1 in \cite{paper1-rq}, amounts to estimating 
\begin{align}
\notag
\Bigl(\sum_{\substack{m \geq 1\\ p^{m} \equiv 1 \pmod*{q}}} 
\frac{1}{mp^m}
&
-
\sum_{\substack{m \geq 1\\ p^{m} \equiv -1 \pmod*{q} }} \!\! \frac{1}{mp^m}\Bigr)
- 
\frac{1}{\log q} \Bigl(\sum_{\substack{m \geq 1\\ p^{m} \equiv 1 \pmod*{q}}} 
\frac{\log p}{p^m}-\sum_{\substack{m \geq 1\\ p^{m} \equiv -1 \pmod*{q} }} \!\! \frac{\log p}{p^m}\Bigr)
\\ &
\label{double-difference}
=
\sum_{\substack{m \geq 1\\ p^{m} \equiv 1 \pmod*{q}}} 
\frac{1}{p^m} \Bigl(\frac{1}{m}-\frac{\log p}{\log q}\Bigr)
-
\sum_{\substack{m \geq 1\\ p^{m} \equiv -1 \pmod*{q}}} 
\frac{1}{p^m} \Bigl(\frac{1}{m}-\frac{\log p}{\log q}\Bigr).
\end{align}
This we studied already, but without the 
weighting factor $w(p,q,m)=\frac{1}{m}-\frac{\log p}{\log q}$.

We now prove that the prime powers contribute $o(1/q)$ to \eqref{double-difference} (for them $m\ge 2$).
Using \eqref{first-interval-estim}-\eqref{second-interval} 
and \cite[eq.~(14)-(15)]{paper1-rq}, the contribution to \eqref{double-difference} of the ranges
$p^m > q  (\log q)^{2}$, $m\ge 2$, is $o(1/q)$.  We are left with
estimating what happens in the range $p^m \le q  (\log q)^{2}$, with $m\ge 2$.
Letting $b\in\{-1,1\}$ and recalling that in \cite[Lemma 2]{Gr}
it is proved  that
\[
\sum_{\substack{m \geq 2; \, p^m \leq q  (\log q)^{2} \\ p^m\equiv b\pmod*{q}}}
\frac{1}{mp^m}  
\ll
\frac{1}{q},
\]
we see that it suffices to show that $w(p,q,m) = o(1)$ for 
$p^m \le q  (\log q)^{2}$, $m\ge 2$.
If $q+1 \le p^m \leq q  (\log q)^{2}$, then
\[
\vert w(p,q,m) \vert
\le
%=
\frac{2\log_2 q}{m\log q},
\]
while,  in the remaining case  $p^m=q-1$, $m\ge 2$, we must have $p=2$ and  
\[
w(p,q,m) 
\le
\frac{1}{m(q-1)\log q}.
\]
Summarizing, we have proved that
\[
\sum_{\substack{m \geq 2; \, p^m \leq q  (\log q)^{2} \\ p^m\equiv b \pmod*{q}}}
\frac{\vert w(p,q,m)\vert}{p^m} 
\ll
\frac{\log_2 q}{q\log q} 
= o \Bigl(\frac{1}{q} \Bigr),
\]
and this implies that the contribution of the prime powers to \eqref{double-difference} is $o(1/q)$
as $q$ tends to infinity.

It remains to deal with the case $m=1$ in \eqref{double-difference}.
It is easy to remark that $w(p,q,1) = o(1)$ for $2q-1 \le p \le q\exp{(c f(q))}$, $c>0$,
where $f(q) = o(\log q)$ as $q$ tends to infinity.  
Exploiting this we can avoid the use of 
\eqref{strong2} and replace \eqref{strong1}
with the following hypothesis which, both as regards the quality of the estimate and the uniformity, is slightly weaker than \eqref{strong1}: 

\noindent
letting $f(x) = \frac{\log x}{(\log_2 x)^2}$ 
and  $t_1 = q \exp(3f(q))$, we assume, for $a\in\{-1,1\}$, that
\begin{equation}\label{strong1-relaxed}
\pi(x;q,a)-\frac{\pi(x)}{\varphi(q)} 
\ll 
\sqrt{\frac{x}{q}}\exp\bigl(f(x)\bigr) \quad\textrm{for}\ x> t_1.
\end{equation}
Recalling $b\in\{-1,1\}$, by using the conjectural estimate \eqref{strong1-relaxed} 
for $p > t_1$ and the Riemann--Stieltjes integration (see for example, equation \eqref{swt}), we have
\begin{equation}\label{inso2}
\sum_{\substack{p > t_1\\p\equiv  b \pmod*{q}}}  \frac{w(p,q,1)}{p}
= o\Bigl(\frac{1}{q}\Bigr).
\end{equation}

Combining equations \eqref{double-difference} and \eqref{inso2}
and the fact that the prime powers contribution is $o(1/q)$,
we obtain that
\begin{equation}
\label{granville-plus-weight}
r(q)-\kappa(q)
=  \frac{q-1}{2} \Bigl(\sum_{\substack{p\le t_1\\ p\equiv 1 \pmod*{q}}} \frac{w(p,q,1)}{p} 
-\sum_{\substack{p\le t_1\\p\equiv -1 \pmod*{q}}}  \frac{w(p,q,1)}{p}
\Bigr)
+ o(1).
\end{equation}
We argue as in the displayed equation before eq.~(13) on \cite[p.~335]{Gr} 
and use the  Brun--Titchmarsh Classical Theorem \ref{BT-thm} to conclude that, for $b\in\{-1,1\}$,
\begin{equation}\label{BTeqn13}
\sum_{\substack{2q-1 \le p\le t_1\\ p\equiv b \pmod*{q}}} \frac{1}{p}
\ll  
\frac{\log_2 (t_1/q)}{q} 
\ll
\frac{\log f(q)}{q},
\end{equation}
Using that
\[
w(p,q,1)  
\ll
\frac{f(q)}{\log q}
\quad
\textrm{for}
\quad
2q-1 \le p\le t_1,
\]
we immediately obtain from \eqref{granville-plus-weight}-\eqref{BTeqn13} that
\[
r(q)-\kappa(q) \ll 
\frac{f(q)\log f(q)}{\log q} +o (1)
\ll
 \frac{1}{\log_2 q} +o (1)
 = o(1).
\]
This motivates us to make the following speculation.

\begin{Speculation}
\label{specmain}
We have, as $q$ tends to infinity,
\begin{equation*}
r(q)\sim\kappa(q).
\end{equation*}
\end{Speculation}

It is likely an easier question to evaluate the average of $r(q)-\kappa(q)$ 
and $|r(q)-\kappa(q)|$,  which, according to the previously described heuristics, is zero again.
Before stating the speculations, we recall that $\pi^*(Q)$ counts the prime numbers between $Q/2$ and $Q.$
\begin{Speculation}
\label{specmain-avg}
We have, as $Q$ tends to infinity,
\begin{equation*}
\sum_{Q/2 <q \le Q} \vert r(q)-\kappa(q)\vert= o(\pi^*(Q)).
\end{equation*}
\end{Speculation}
Figure \ref{fig6} clearly supports Speculations \ref{specmain} and \ref{specmain-avg}.

We now show that if the strong form of the Elliott--Halberstam conjecture \ref{EH2} holds,
then $\kappa(q) = o(1)$ on average.

\begin{Thm}\label{kappaqzeroEH2}
Assuming the strong form of the Elliott--Halberstam conjecture \ref{EH2}, we have, as $Q$ tends to infinity,
\begin{equation}\notag
 \sum_{Q/2 < q \leq Q} \vert \kappa(q) \vert
 = o(\pi^*(Q)).
\end{equation}
\end{Thm}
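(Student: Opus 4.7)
The plan is to leverage the decomposition $\kappa(q) = \frac{q-1}{2}(v_q + w_q)$ from \eqref{watowat}: Corollary \ref{firstsum1} unconditionally delivers $\sum_{Q/2 < q \leq Q} q|v_q| = o(\pi^*(Q))$, so the whole burden of the theorem reduces to proving, under Conjecture \ref{EH2}, that $\sum_{Q/2 < q \leq Q}(q-1)|w_q| = o(\pi^*(Q))$. Setting $W_q = (\log q)\,w_q$, this in turn follows from $\sum_{Q/2 < q \leq Q}|W_q| \ll 1/\log Q$, because then $(q-1)|w_q| \leq Q|W_q|/\log(Q/2)$ sums to $\ll Q/(\log Q)^2 = o(\pi^*(Q))$. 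I would choose the threshold
\[
X = \frac{Q}{2}\exp\Bigl(\frac{\log(Q/2)}{\sqrt{\log_2(Q/2)}}\Bigr),
\]
imposed by \ref{EH2} with $u = Q/2$, and split $W_q = W_q(X) + (W_q - W_q(X))$.

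For the tail, set $F(t;q) = \pi(t;q,1) - \pi(t;q,-1)$. A Riemann--Stieltjes integration by parts, combined with $F(T;q)\log T/T \to 0$ (a consequence of Siegel--Walfisz for each fixed $q$), gives
\[
W_q - W_q(X) = -\frac{F(X;q)\log X}{X} + \int_X^\infty F(t;q)\cdot\frac{\log t - 1}{t^2}\,dt.
\]
Taking absolute values, summing over $q \in (Q/2,Q]$, swapping sum and integral on the right, and applying \ref{EH2} at $t = X$ for the boundary and for each $t \geq X$ inside the integral should then yield
\[
\sum_{Q/2 < q \leq Q}|W_q - W_q(X)|
\ll \frac{\log X}{X}\cdot\frac{X}{(\log X)^3} + \int_X^\infty \frac{\log t}{t^2}\cdot\frac{t}{(\log t)^3}\,dt
\ll \frac{1}{\log Q}.
\]

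For the residual part $W_q(X)$ the plan is to reverse the order of summation. Because $X = Q^{1+o(1)}$, every prime $p \leq X$ with $p \equiv b \pmod{q}$, $b \in \{-1,1\}$, has the shape $p = mq + b$ with $1 \leq m \leq 2X/Q = Q^{o(1)}$, and $\log p \sim \log q$ throughout this range. Consequently
\[
\sum_{Q/2 < q \leq Q}|W_q(X)|
\ll \frac{\log Q}{Q}\sum_{m \leq 2X/Q}\frac{1}{m}\sum_{b\in\{-1,1\}}\#\{Q/2 < q \leq Q : q,\,mq+b \text{ both prime}\}.
\]
Estimating the innermost cardinality by Lemma \ref{lem1} (with $t = m$, $|a| = 1$, and $\log(Q/m) \sim \log Q$) gives $\ll Q/(\varphi(m)(\log Q)^2)$, and the absolute convergence of $\sum_{m \geq 1} 1/(m\varphi(m))$ then delivers $\sum_q |W_q(X)| \ll 1/\log Q$. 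Combining the two estimates closes the argument. The delicate step is that Conjecture \ref{EH2} is tight: it just reaches down to $t \geq X = Q^{1+o(1)}$, and this threshold is precisely calibrated so that the complementary small-$p$ range is narrow enough (only $m \leq Q^{o(1)}$) for the geometric-type series $\sum_{m \geq 1} 1/(m\varphi(m))$ to absorb it without any logarithmic loss.
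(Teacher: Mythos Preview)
Your overall strategy matches the paper's: decompose via \eqref{watowat}, dispose of $v_q$ by Corollary~\ref{firstsum1}, split $w_q$ at a threshold dictated by Conjecture~\ref{EH2}, and handle the tail by partial summation plus~\ref{EH2}. For the short-range piece $W_q(X)$, however, you take a different and in principle simpler route than the paper: you bound the first moment $\sum_q |W_q(X)|$ directly, whereas the paper applies Cauchy--Schwarz and bounds $\sum_q q^2|w_q(t_0)|^2$ via sieve estimates on prime triples $(q,\,kq\pm 1,\,\ell q\pm 1)$, citing \cite[p.~336]{Gr}.

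Your first-moment argument has a gap in the application of Lemma~\ref{lem1}. That lemma bounds prime pairs $p,q$ with \emph{both} $p,q\le u$ and $p=tq+a$. Taking $u=Q$ therefore only counts pairs with $p\le Q$; but for $q>Q/2$ and $m\ge 2$ one has $p=mq+b>Q$, so the set you actually need (primes $q\in(Q/2,Q]$ with $mq+b$ prime, no upper constraint on $mq+b$) is not the one bounded. The correct upper-bound sieve estimate, obtained e.g.\ by applying Lemma~\ref{lem1} with $u=mQ+1$, is
\[
\#\{Q/2<q\le Q : q,\ mq+b \text{ both prime}\}\ \ll\ \frac{Q}{(\log Q)^2}\prod_{p\mid m}\frac{p}{p-1}\ \ll\ \frac{Q}{(\log Q)^2}\cdot\frac{m}{\varphi(m)},
\]
which exceeds your claimed $Q/(\varphi(m)(\log Q)^2)$ by a factor of $m$. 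Consequently the $m$-sum becomes $\sum_{m\le 2X/Q}1/\varphi(m)\ll \log(2X/Q)\ll (\log Q)/\sqrt{\log_2 Q}$ rather than the convergent $\sum_m 1/(m\varphi(m))$, and you obtain
\[
\sum_{Q/2<q\le Q}|W_q(X)|\ \ll\ \frac{1}{\sqrt{\log_2 Q}}
\]
instead of $\ll 1/\log Q$. This is still $o(1)$, so your argument survives the correction and yields $\sum_q(q-1)|w_q(X)|\ll \pi^*(Q)/\sqrt{\log_2 Q}=o(\pi^*(Q))$, exactly the saving the paper obtains via Cauchy--Schwarz. Thus your route is legitimate (and arguably cleaner), but the bound you state for $\sum_q|W_q(X)|$ and the appeal to the convergence of $\sum_m 1/(m\varphi(m))$ are both incorrect as written.
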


\begin{proof}
Letting 
$t_0 = Q \exp\bigl(\frac{\log Q}{\sqrt{\log_2 Q}}\bigr)$, we have
\begin{align}
\label{threesums}
\sum_{Q/2 < q \leq Q} \vert \kappa(q) \vert 
& 
= \frac{q-1}{2}  \sum_{Q/2 < q \leq Q} \vert v_q + w_q \vert \notag 
\\
& \leq  \sum_{Q/2 < q \leq Q} q\vert v_q \vert 
+ 
\sum_{Q/2 < q \leq Q} q\vert w_q - w_q(t_0)\vert + \sum_{Q/2 < q \leq Q} q\vert w_q(t_0) \vert.
\end{align}
By replacing the Elliott--Halberstam conjecture \ref{EHconjecture} with 
its strong version (stated in Conjecture \ref{EH2}) into equation \eqref{EHappl}, 
we have, as $Q$ tends to infinity,
 \begin{equation}\label{secondsum2}
     \sum_{Q/2 < q \leq Q} q\vert  w_q - w_q(t_0)\vert = o(\pi^*(Q)).
 \end{equation}
Using the Cauchy-Schwarz inequality, we have, as $Q$ tends to infinity,
\begin{align}\label{noEH2}
\Bigl(\sum_{Q/2 < q \leq Q} q\vert &w_q(t_0) \vert \Bigr)^2
 \leq 
    \pi^*(Q) \sum_{Q/2 < q \leq Q} q^2\vert w_q(t_0) \vert ^2 
    \notag \\
    & \ll \pi^*(Q) \sum_{k,~ l < t_0/Q} \frac{1}{kl} ~ \# \{Q/2 < q \leq Q: q, kq\pm 1 ~\text{and} ~ lq\pm1 ~ \text{are all prime} \} \notag \\
    & \ll \frac{\pi^*(Q) Q}{(\log Q) \log_2 Q} = o((\pi^*(Q))^2),
\end{align}
The final estimate in \eqref{noEH2} follows from 
\cite[p.~336]{Gr}. The proof is completed on combining \eqref{threesums}-\eqref{noEH2} and using Corollary \ref{firstsum1}. 
\end{proof}

We recall that, assuming that the strong form of the Elliott--Halberstam conjecture \ref{EH2} holds 
in the larger interval $t \geq u \exp{\bigl(\sqrt{\log u}\,\bigr)}$,
it was proved in Granville \cite[\S10, p.~ 336]{Gr} that \begin{equation}\label{rqzeroEH2}
\sum_{Q/2 <q\le Q} \vert r(q)\vert= o(\pi^*(Q)),
\end{equation}
as $Q$ tends to infinity.
Inspecting his proof, it not hard to see that it holds even if one assumes the Elliott--Halberstam conjecture \ref{EH2}
in the range
$t \geq u \exp\bigl(\frac{\log u}{\sqrt{\log_2 u}}\bigr)$, as we did.

It is then clear that Speculation \ref{specmain-avg} follows from \eqref{rqzeroEH2} 
and Theorem \ref{kappaqzeroEH2} (under the strong form of the Elliott--Halberstam conjecture \ref{EH2}).
 
Table \ref{table1} gives a summary of the roles of conjectures and theorems used in studying the estimates for $\kappa(q)$.

\begin{table}[htp]
 \scalebox{0.85}{
 \renewcommand{\arraystretch}{1.8}
 	\begin{tabular}{|c|c|c|}
 		\hline
 		Range of application  & Main ingredient 
 		& \text{Reference}\\
 		\hline
 		$ p > q^{1+\delta}$ & Elliott--Halberstam conjecture \ref{EHconjecture} & Proposition \ref{goneplus}\\ \hline
 		$ p > q^{2+\delta}$ & Bombieri--Vinogradov theorem \ref{BV-type-ineq} & Proposition \ref{goneplus} \\ 
 		\hline
 		 $ q^{1+\delta}< p \leq q^{2+\delta}$ & Hooley's Brun--Titchmarsh ``almost all'' theorem \cite[Cor.~3.2]{CG} & Proposition \ref{goneplus}\\ 
 	  \hline
 	  $ q(\log q)^3 < p \le q^{1+\delta}$ & Hooley's 
 	  Brun--Titchmarsh ``almost all'' theorem \cite[Cor.~3.2]{CG} & Proposition \ref{imp} \\
 	  \hline
 		$2q-1 \le p \leq q \exp(3\frac{\log q}{(\log_2 q)^2})$ & Brun--Titchmarsh Classical Theorem \ref{BT-thm} & Speculation \ref{specmain}\\ \hline
 		 $ p > q \exp(3\frac{\log q}{(\log_2 q)^2})$ & Conjecture \eqref{strong1-relaxed} & Speculation \ref{specmain}\\ \hline
 		  $p >  Q \exp\bigl(\frac{\log Q}{\sqrt{\log_2 Q}}\bigr)$ &  Strong form of the Elliott--Halberstam conjecture \ref{EH2} & Theorem \ref{kappaqzeroEH2}\\ \hline
 		  	 $p\leq  Q \exp\bigl(\frac{\log Q}{\sqrt{\log_2 Q}}\bigr)$ & Cauchy-Schwarz inequality and sievings  & Theorem \ref{kappaqzeroEH2}\\ \hline
 	\end{tabular}
 } 
 \caption{\label{table1}  
 Theorems or conjectures used in this paper and where they are used.
 }
\end{table}

\section{Comments on the plots and on the histograms}
\label{sec:numerical}

\newcommand{\pariversion}{PARI/GP (v.~2.15.4)}
\newcommand{\pythonversion}{Python (v.~3.11.6)}
\newcommand{\matplotlibversion}{Matplotlib (v.~3.8.2)}
\newcommand{\pandasversion}{Pandas (v.~2.1.3)}
\newcommand{\fftwversion}{FFTW (v.~3.3.10)}
\newcommand{\ubuntuversion}{Ubuntu~22.04.3~LTS} 
\newcommand{\clusteraddress}{\url{https://hpc.math.unipd.it}}
\newcommand{\capriaddress}{\url{https://capri.dei.unipd.it}}
\newcommand{\optiplexmachine}{Dell OptiPlex-3050, equipped with an Intel i5-7500 processor, 3.40GHz, 
32 GB of RAM}

\newcommand{\bound}{10^7}

The actual values of $r(q)$, $\kappa(q)$, and of all the other 
relevant quantities presented in the 
plots and histograms presented below
were obtained for every odd prime $q$ up to $\bound$
using the FFTW \cite{FrigoJ2005} software library
set to work with the \textit{long double precision} (80 bits). 
The algorithms used are presented in 
\cite{Languasco2021a}-\cite{LanguascoR2021} and in
\cite{paper1-rq}\footnote{The results presented in Table \ref{table2} were obtained by implementing
the very same algorithms in the scripting language of Pari/GP.}

The results were then collected in some \emph{comma-separated values} (csv) files and then
all the plots and the histograms were
obtained running on such stored data some suitable designed scripts written
using \pythonversion\ and making use of the packages \pandasversion\ and \matplotlibversion.
The normal function used in the histograms is defined as
\[
\mathcal{N}(x,\mu,\sigma) = \frac{1}{\sigma \sqrt{2\pi}} \exp\Bigl(- \frac{(x-\mu)^2}{2\sigma^2} \Bigr),
\]
where $\mu$, $\sigma$ respectively represent the mean and the
standard deviation of the plotted data.

To better  demonstrate the similarities between the behaviour of $\kappa(q)$
and $r(q)$,  we insert here also some plots and histograms about
$r(q)$ that were already presented in \cite{paper1-rq}.
The first important remark is that Figures \ref{fig1}-\ref{fig2}
show that both $r(q)$ and $\kappa(q)$ have a symmetrical distribution having average equal to $0$;
this symmetry is only partially explained by the known theoretical results.
We also remark that the ``concentration'' of the computed data
around the values $\pm1/4$ depends on 
the contribution of the primes $q$ such that
$2q\pm1$ are primes, see Figure \ref{fig3}. 
Indeed, we would expect a concentration to occur for primes $q$ related
to admissible sets $\mathcal{A}$ that are of  cardinality as small as possible (leading to 
relatively many corresponding primes $q$) and for which
$\mu(\mathcal{A})$ is large. Clearly the best candidate here is $\mathcal{A}=\{2\}$.
This is somehow predicted
by a result stated
in \cite{Gr}, assuming there are 
at least $\gg x/(\log x)^2$ Sophie Germain primes.
In Figure \ref{fig4} one sees
that the other value concentrations are less pronounced, for example 
those around the values $\pm1/8$ (which are due the contribution of the primes $q$ such that
$4q\pm1$ are primes). In fact the global effect of such ``secondary'' concentrations close to $\pm1/(2m)$, $m\ge4$,
is the reason why the central ``peak'' 
around the origin is steeper 
in comparison to the normal curve.

This is confirmed by the observation that the histograms show
distributions for $\kappa(q)$ that are closer
to the normal one  if only the contribution of the primes $q$ such that
$2q\pm 1$ and  $4q\pm 1$ are all four composite are taking 
into account, see  Figure \ref{fig5};
a similar phenomenon is observed for $r(q)$ too.
Finally, the behaviour 
of $r(q)-\kappa(q)$ shown in Figure \ref{fig6} supports the remarks 
about $r(q)$ being close to $\kappa(q)$ made in  Section \ref{sec:spec}.

Our computer programs developed to compute the values of $\kappa(q)$ (already used in \cite{Languasco2021a,LanguascoR2021}), the programs designed to analyse these values,
and the results obtained are available on the webpage 
\url{https://www.math.unipd.it/~languasc/kq-comput.html}

\medskip
\noindent \textbf{Acknowledgment}. 
Part of the work was done during the postdocs and multiple visits of the
first, fourth and fifth author
at the Max Planck Institute for Mathematics (MPIM) under the
mentorship of third author. They thank MPIM for the invitations, 
the hospitality of the staff and the excellent working conditions. The fourth author is supported by the Austrian Science Fund (FWF): P 35863-N.
The fifth author wishes to thank the INI and LMS for the financial support.
The computational work was carried out on machines
of the cluster located at the Dipartimento di Matematica ``Tullio Levi-Civita'' of 
the University of Padova, see \url{https://hpc.math.unipd.it}. 
The authors are grateful for having had such 
good computing facilities 
at their disposal. 

\begin{table}[htp]
\scalebox{0.5875}{
\begin{tabular}{|c|r|}
\hline
$q$  & $\kappa(q)$\phantom{01234567890123456} \\ \hline
$3$ &   $-0.335224373301549299654816272011\dotsc$ \\ \hline
$5$ &   $-0.196173501603961235980346083986\dotsc$ \\ \hline
$7$ &   $-0.067031676838182540620959092690\dotsc$ \\ \hline
$11$ &  $ 0.102860286443295216553377535220\dotsc$ \\ \hline
$13$ &  $ 0.112609625562397183644619140260\dotsc$ \\ \hline
$17$ &  $-0.123599726784128717636050456728\dotsc$ \\ \hline
$19$ &  $-0.483413418826496988129263309946\dotsc$ \\ \hline
$23$ &  $ 0.304501778642862588117192982929\dotsc$ \\ \hline
$29$ &  $ 0.202174400853472811939673145368\dotsc$ \\ \hline
$31$ &  $-0.167096940024034810849977929755\dotsc$ \\ \hline
$37$ &  $-0.116722996141948523834462113315\dotsc$ \\ \hline
$41$ &  $-0.019078448817730487665286522915\dotsc$ \\ \hline
$43$ &  $-0.001063229955915806485603063750\dotsc$ \\ \hline
$47$ &  $-0.004178628154399978902142786696\dotsc$ \\ \hline
$53$ &  $-0.068102981731655092877815133182\dotsc$ \\ \hline
$59$ &  $ 0.077561177537401718955224637346\dotsc$ \\ \hline
$61$ &  $-0.085600677063319552995034524968\dotsc$ \\ \hline
$67$ &  $ 0.048195303630233224012827541336\dotsc$ \\ \hline
$71$ &  $-0.054112902001196238161718707411\dotsc$ \\ \hline
$73$ &  $ 0.349480310713616884139356667045\dotsc$ \\ \hline
$79$ &  $-0.156621195958631413503001970935\dotsc$ \\ \hline
$83$ &  $ 0.232654418724714289189129416232\dotsc$ \\ \hline
$89$ &  $ 0.286113111172147083063576959599\dotsc$ \\ \hline
$97$ &  $-0.097406509380448235796556853350\dotsc$ \\ \hline
$101$ & $ 0.137944974577464046144537684905\dotsc$ \\ \hline
$103$ & $ 0.083885279170642135997577692479\dotsc$ \\ \hline
$107$ & $-0.021577355804406921497164663554\dotsc$ \\ \hline
$109$ & $-0.132210409721544731056437843608\dotsc$ \\ \hline
$113$ & $ 0.146243585139370860703027524805\dotsc$ \\ \hline
$127$ & $ 0.040394010134348489183018827920\dotsc$ \\ \hline
$131$ & $ 0.298449023408991783321689774462\dotsc$ \\ \hline
$137$ & $ 0.047930965876723141064902285324\dotsc$ \\ \hline
$139$ & $-0.148432025631579335054711906883\dotsc$ \\ \hline
$149$ & $ 0.074744597741851346493266689740\dotsc$ \\ \hline
$151$ & $ 0.124630596839923975050779380013\dotsc$ \\ \hline
$157$ & $-0.342221786939673495594702901369\dotsc$ \\ \hline
$163$ & $-0.075930049357489864149813121665\dotsc$ \\ \hline
$167$ & $-0.240143659440212808338520030893\dotsc$ \\ \hline
$173$ & $ 0.263674051277298658168490819274\dotsc$ \\ \hline
$179$ & $ 0.333190239006673566447646671954\dotsc$ \\ \hline
$181$ & $ 0.073654489425694545929682768408\dotsc$ \\ \hline
$191$ & $ 0.290035630415210093718516692609\dotsc$ \\ \hline
$193$ & $ 0.222651642202729866484149916054\dotsc$ \\ \hline
$197$ & $-0.156566809829687617064016329304\dotsc$ \\ \hline
$199$ & $-0.282430739458251133741875904647\dotsc$ \\ \hline
$211$ & $-0.429168347924233047597587411558\dotsc$ \\ \hline
$223$ & $-0.155603071391591932610289432189\dotsc$ \\ \hline
$227$ & $-0.353136988360127612810701881267\dotsc$ \\ \hline
$229$ & $-0.362422753876730085842820149560\dotsc$ \\ \hline
$233$ & $ 0.433532695766378853227670116542\dotsc$ \\ \hline
$239$ & $ 0.164184459843550091782890956397\dotsc$ \\ \hline
$241$ & $ 0.159253022504160842909844706024\dotsc$ \\ \hline
$251$ & $ 0.146972144654037063050493044384\dotsc$ \\ \hline
$257$ & $-0.136199726917943335811627159604\dotsc$ \\ \hline
$263$ & $-0.074624164045906778442642890380\dotsc$ \\ \hline
$269$ & $ 0.013472786285611176254289149447\dotsc$ \\ \hline
 %\phantom{} & \phantom{}  \\
\end{tabular}
}
\scalebox{0.5875}{
\begin{tabular}{|c|r|}
\hline
$q$  & $\kappa(q)$\phantom{01234567890123456} \\ \hline
$271$ & $-0.189411243976062586851108632756\dotsc$ \\ \hline
$277$ & $ 0.263202439115100139126518544356\dotsc$ \\ \hline
$281$ & $ 0.057724833014438081725834568053\dotsc$ \\ \hline
$283$ & $-0.017818517564561211690503522917\dotsc$ \\ \hline
$293$ & $ 0.306747668430650885617781010764\dotsc$ \\ \hline
$307$ & $-0.069456295139958127570388779996\dotsc$ \\ \hline
$311$ & $ 0.198785581009518830204116721399\dotsc$ \\ \hline
$313$ & $-0.058335428782145955572971075926\dotsc$ \\ \hline
$317$ & $-0.306162250315812940131841982805\dotsc$ \\ \hline
$331$ & $-0.221876535391440609759086790484\dotsc$ \\ \hline
$337$ & $-0.137585019298129547983000969948\dotsc$ \\ \hline
$347$ & $ 0.125972980141740521593554243788\dotsc$ \\ \hline
$349$ & $-0.021683748794506169104750643621\dotsc$ \\ \hline
$353$ & $-0.184452367141133812078478630767\dotsc$ \\ \hline
$359$ & $ 0.147038825471371984998505014858\dotsc$ \\ \hline
$367$ & $-0.069313097303205530130205666424\dotsc$ \\ \hline
$373$ & $ 0.076596129588533742221629674966\dotsc$ \\ \hline
$379$ & $-0.388797187453176730278714669405\dotsc$ \\ \hline
$383$ & $-0.230682223036936316348691095841\dotsc$ \\ \hline
$389$ & $-0.223032768871024628694255146810\dotsc$ \\ \hline
$397$ & $ 0.000748118855371051701377580116\dotsc$ \\ \hline
$401$ & $ 0.185572827154491322428329890735\dotsc$ \\ \hline
$409$ & $ 0.248052237047556669352046991132\dotsc$ \\ \hline
$419$ & $ 0.157332613388133412852639687710\dotsc$ \\ \hline
$421$ & $-0.174178764178552548930059030768\dotsc$ \\ \hline
$431$ & $ 0.128634340886396804242241451176\dotsc$ \\ \hline
$433$ & $ 0.084269733696337528785456854927\dotsc$ \\ \hline
$439$ & $-0.447877263969389723214762662837\dotsc$ \\ \hline
$443$ & $ 0.427464638596097761454083371858\dotsc$ \\ \hline
$449$ & $-0.139890854597819111185944638526\dotsc$ \\ \hline
$457$ & $-0.245371236271918975132113711427\dotsc$ \\ \hline
$461$ & $ 0.038620450496409517715179339278\dotsc$ \\ \hline
$463$ & $-0.036333074275340449015217357910\dotsc$ \\ \hline
$467$ & $-0.121566043211421853161767135819\dotsc$ \\ \hline
$479$ & $ 0.145182917495206913077999600130\dotsc$ \\ \hline
$487$ & $ 0.140413509949441485073692928488\dotsc$ \\ \hline
$491$ & $ 0.262904594663507382291183569451\dotsc$ \\ \hline
$499$ & $-0.219260694237723093748867207657\dotsc$ \\ \hline
$503$ & $ 0.155264822023478652427167746436\dotsc$ \\ \hline
$509$ & $ 0.396770359727893150134302886340\dotsc$ \\ \hline
$521$ & $-0.407742993248390943508928391886\dotsc$ \\ \hline
$523$ & $ 0.011011396537186197983927501061\dotsc$ \\ \hline
$541$ & $-0.056550025070562764651312824901\dotsc$ \\ \hline
$547$ & $-0.362750306999037956812425242973\dotsc$ \\ \hline
$557$ & $-0.002498502607772693313640828116\dotsc$ \\ \hline
$563$ & $-0.082061002005529751422431640627\dotsc$ \\ \hline
$569$ & $-0.189592641309933980463255439634\dotsc$ \\ \hline
$571$ & $ 0.006762564496705295384412330712\dotsc$ \\ \hline
$577$ & $-0.071366532593572706997338782409\dotsc$ \\ \hline
$587$ & $-0.275331295594215260975351328532\dotsc$ \\ \hline
$593$ & $ 0.047479327097910440102634939653\dotsc$ \\ \hline
$599$ & $-0.042594548496272768170014013524\dotsc$ \\ \hline
$601$ & $-0.027460621240038493801636892036\dotsc$ \\ \hline
$607$ & $-0.183552581670057320622328056234\dotsc$ \\ \hline
$613$ & $-0.174524935685109163930471540817\dotsc$ \\ \hline
$617$ & $-0.208966374151513622926768311323\dotsc$ \\ \hline
 %\phantom{} & \phantom{}  \\
\end{tabular}
}
\scalebox{0.5875}{
\begin{tabular}{|c|r|}
\hline
$q$  & $\kappa(q)$\phantom{01234567890123456} \\ \hline
$619$ & $-0.259629282624073772565452173821\dotsc$ \\ \hline
$631$ & $ 0.190116056570891865159528180383\dotsc$ \\ \hline
$641$ & $ 0.325939155356921431799220348265\dotsc$ \\ \hline
$643$ & $ 0.019886733189759427898093298165\dotsc$ \\ \hline
$647$ & $-0.137685395879823224137370997201\dotsc$ \\ \hline
$653$ & $ 0.273543758131422511493206807416\dotsc$ \\ \hline
$659$ & $ 0.383633957411155840228259251017\dotsc$ \\ \hline
$661$ & $-0.200105122199871976054565725904\dotsc$ \\ \hline
$673$ & $ 0.019385166425234903510111836048\dotsc$ \\ \hline
$677$ & $-0.094849834462250448508720537392\dotsc$ \\ \hline
$683$ & $ 0.119530228514915593721765831729\dotsc$ \\ \hline
$691$ & $-0.290881331274701683551158981060\dotsc$ \\ \hline
$701$ & $-0.087188812276913764965555516461\dotsc$ \\ \hline
$709$ & $ 0.064919413083159646396701312314\dotsc$ \\ \hline
$719$ & $ 0.181990826991575152578239392835\dotsc$ \\ \hline
$727$ & $ 0.046655812079948584302413751120\dotsc$ \\ \hline
$733$ & $-0.016642504300237507705799731068\dotsc$ \\ \hline
$739$ & $ 0.115713008144331724148361011007\dotsc$ \\ \hline
$743$ & $ 0.007343718971347873496945247142\dotsc$ \\ \hline
$751$ & $ 0.030925590148469386161588553586\dotsc$ \\ \hline
$757$ & $-0.052022312742514114454595810552\dotsc$ \\ \hline
$761$ & $ 0.444249104121725236751924267608\dotsc$ \\ \hline
$769$ & $-0.159282715034694377684631604887\dotsc$ \\ \hline
$773$ & $ 0.096632555915274919995984608182\dotsc$ \\ \hline
$787$ & $-0.042575685211717990064154229651\dotsc$ \\ \hline
$797$ & $ 0.058819660486877031151706605185\dotsc$ \\ \hline
$809$ & $ 0.315145023051549625660887462917\dotsc$ \\ \hline
$811$ & $-0.235354497663854842492900197703\dotsc$ \\ \hline
$821$ & $ 0.096861803511194817694770007831\dotsc$ \\ \hline
$823$ & $-0.034388994619879620513739570904\dotsc$ \\ \hline
$827$ & $-0.173652349209065218039268176978\dotsc$ \\ \hline
$829$ & $-0.195235656323517329297617035637\dotsc$ \\ \hline
$839$ & $-0.115638368729781225060211708094\dotsc$ \\ \hline
$853$ & $ 0.065511643256111743290917202609\dotsc$ \\ \hline
$857$ & $ 0.061215218394828596351971508897\dotsc$ \\ \hline
$859$ & $-0.159681951250709570182943546895\dotsc$ \\ \hline
$863$ & $ 0.070158552601870909036548126563\dotsc$ \\ \hline
$877$ & $-0.363272663845270843179334892982\dotsc$ \\ \hline
$881$ & $ 0.151983175755910192350668847190\dotsc$ \\ \hline
$883$ & $ 0.158424230922396137325711547541\dotsc$ \\ \hline
$887$ & $-0.034198720345486031699283336100\dotsc$ \\ \hline
$907$ & $-0.145266048805493201042546684519\dotsc$ \\ \hline
$911$ & $ 0.058290635536702959098297788733\dotsc$ \\ \hline
$919$ & $ 0.046137246025493658435384125040\dotsc$ \\ \hline
$929$ & $ 0.063037917470448604187531863614\dotsc$ \\ \hline
$937$ & $-0.072912862331977817247297121763\dotsc$ \\ \hline
$941$ & $ 0.113504707610325586382805828841\dotsc$ \\ \hline
$947$ & $ 0.268372969161553904131938250228\dotsc$ \\ \hline
$953$ & $ 0.136016292884486881751392072599\dotsc$ \\ \hline
$967$ & $-0.359730379227549872320819915874\dotsc$ \\ \hline
$971$ & $ 0.094543904158954407172899643305\dotsc$ \\ \hline
$977$ & $-0.211593426266351284571256570972\dotsc$ \\ \hline
$983$ & $-0.300766180700610757960740884883\dotsc$ \\ \hline
$991$ & $-0.126006012080074567946120981737\dotsc$ \\ \hline
$997$ & $-0.152411646339118425456419731685\dotsc$ \\ \hline
\phantom{} & \phantom{}  \\ 
\hline
\end{tabular}
} 
\caption{\label{table2}
Values of $\kappa(q)$ (truncated) for every odd prime up to $1000$.
}
\end{table}

\mbox{}
\vfill\eject
\mbox{}

%%COMMENTED FIGURES AND HISTOGRAMS
\ifthenelse{\boolean{plots_included}}
{
\begin{figure}[H]
\begin{minipage}{0.48\textwidth} 
\includegraphics[scale=0.32,angle=0]{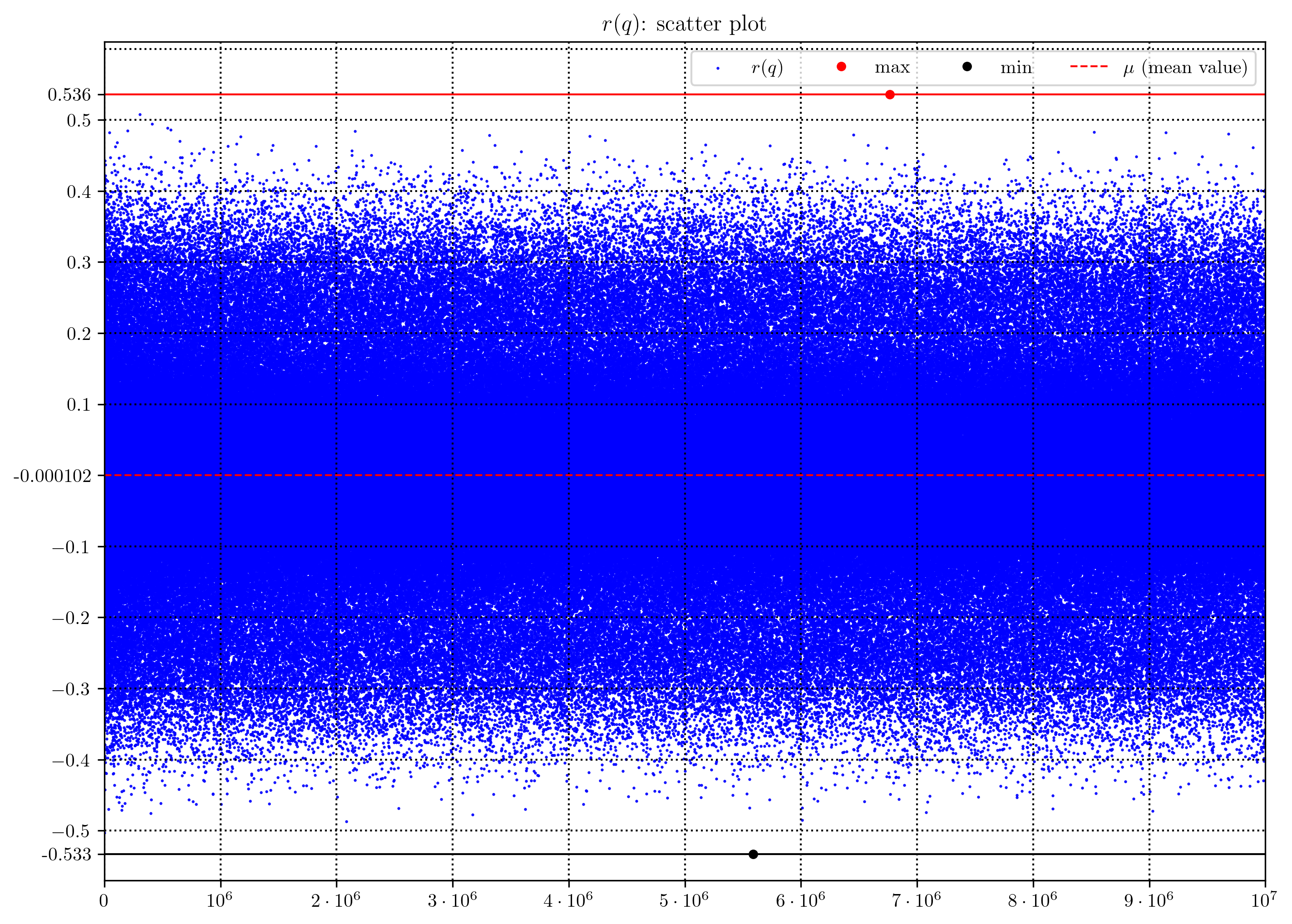}  
\end{minipage} 
\begin{minipage}{0.48\textwidth} 
\includegraphics[scale=0.32,angle=0]{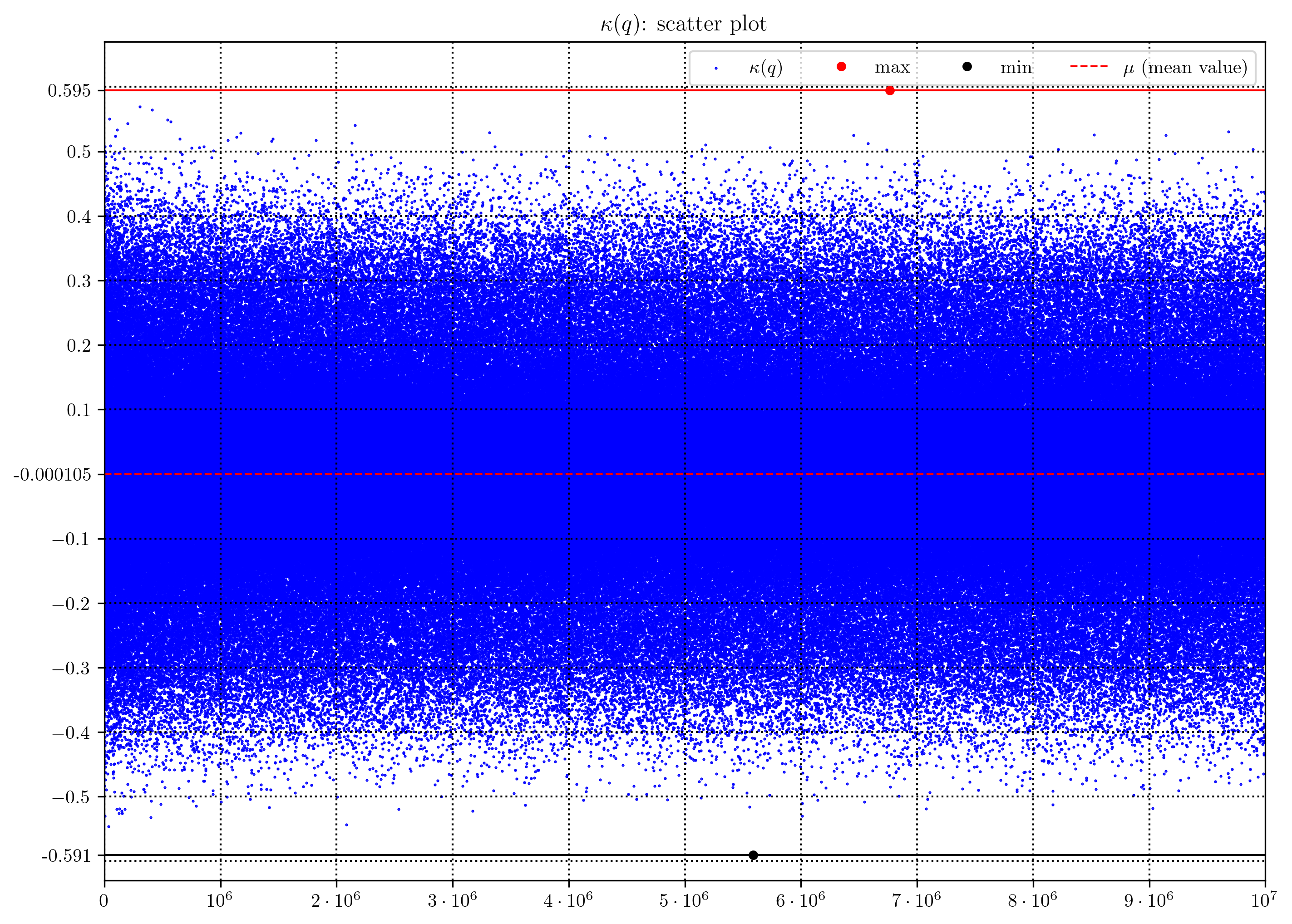}
\end{minipage} 
\caption{{\small On the left: the values of $r(q)$;
on the right: the values of $\kappa(q)$, $q$ prime, $3\le q\le  \bound$.
The red dashed lines represent the mean values.}}
\label{fig1}
\end{figure}

\begin{figure}[H]
\hskip0.5cm
\begin{minipage}{0.48\textwidth} 
\includegraphics[scale=0.5,angle=0]{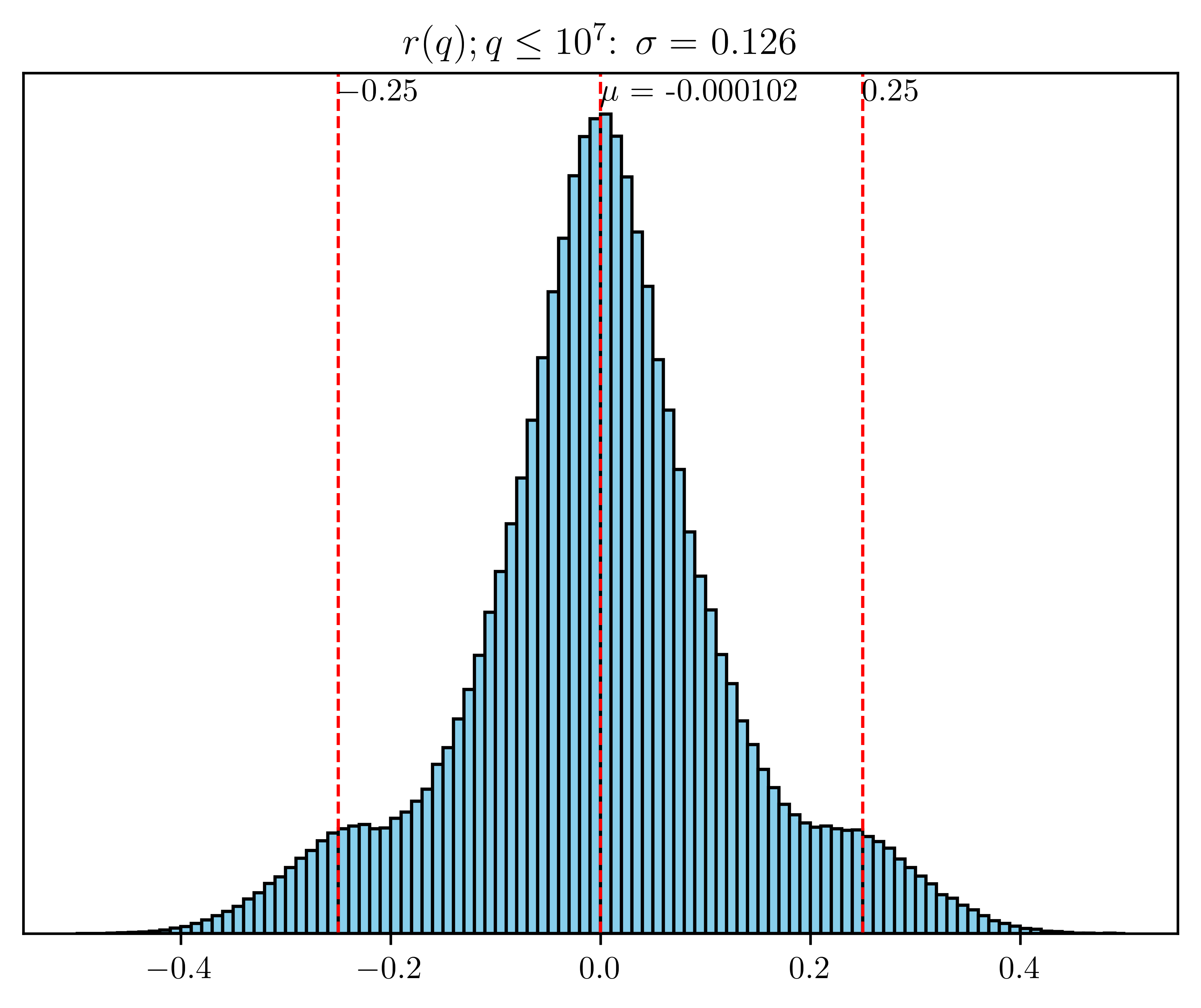} 
\end{minipage} 
\begin{minipage}{0.48\textwidth} 
\includegraphics[scale=0.5,angle=0]{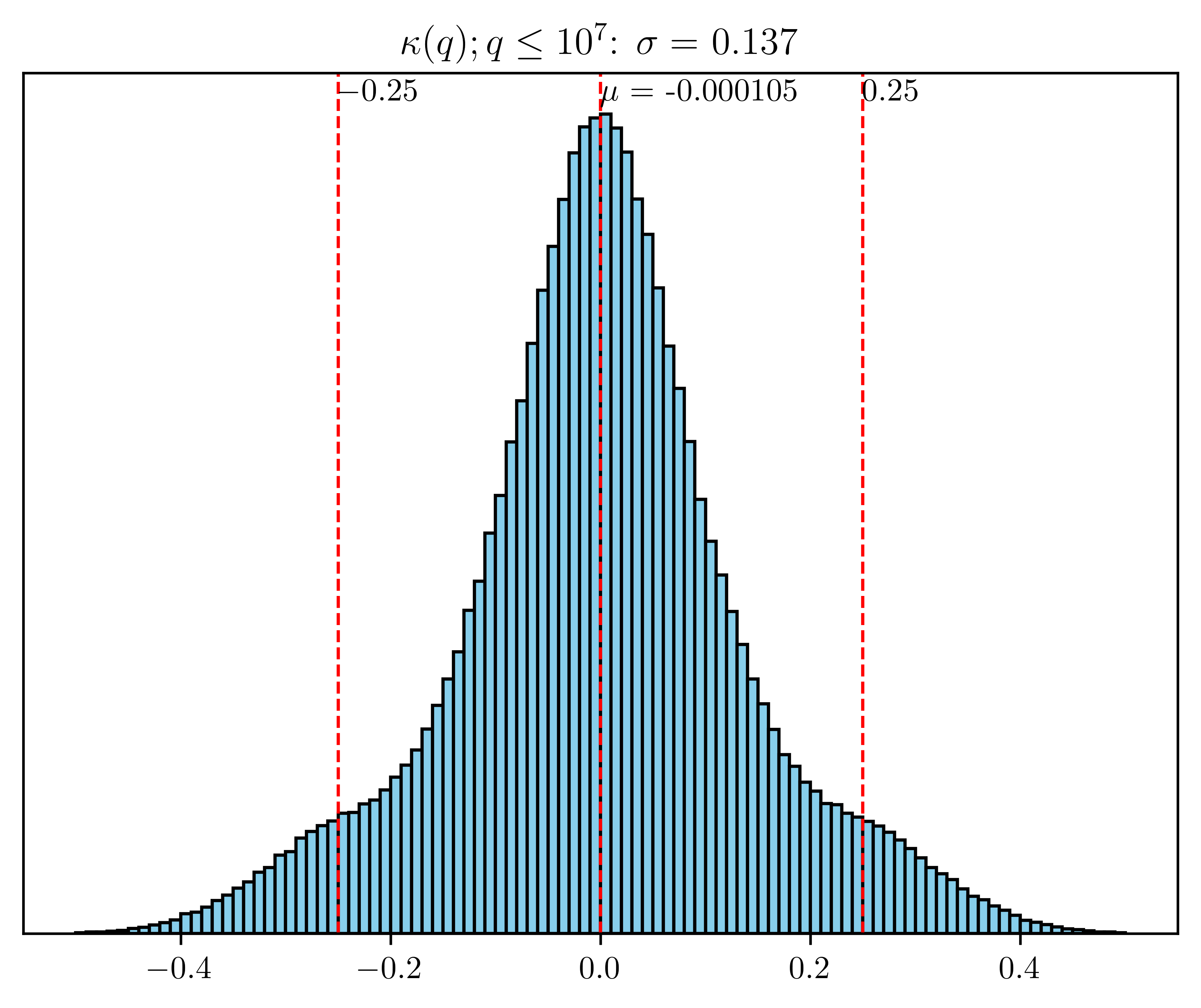}
\end{minipage} 
\caption{{\small On the left:  the histogram of $r(q)$;
on the right: the histogram of $\kappa(q)$, $q$ prime, $3\le q\le  \bound$.
The red dashed lines represent the mean values, and $\pm1/4$.}}  
\label{fig2}
\end{figure}

\begin{figure}[H]
\hskip0.5cm
\begin{minipage}{0.48\textwidth} 
\includegraphics[scale=0.5,angle=0]{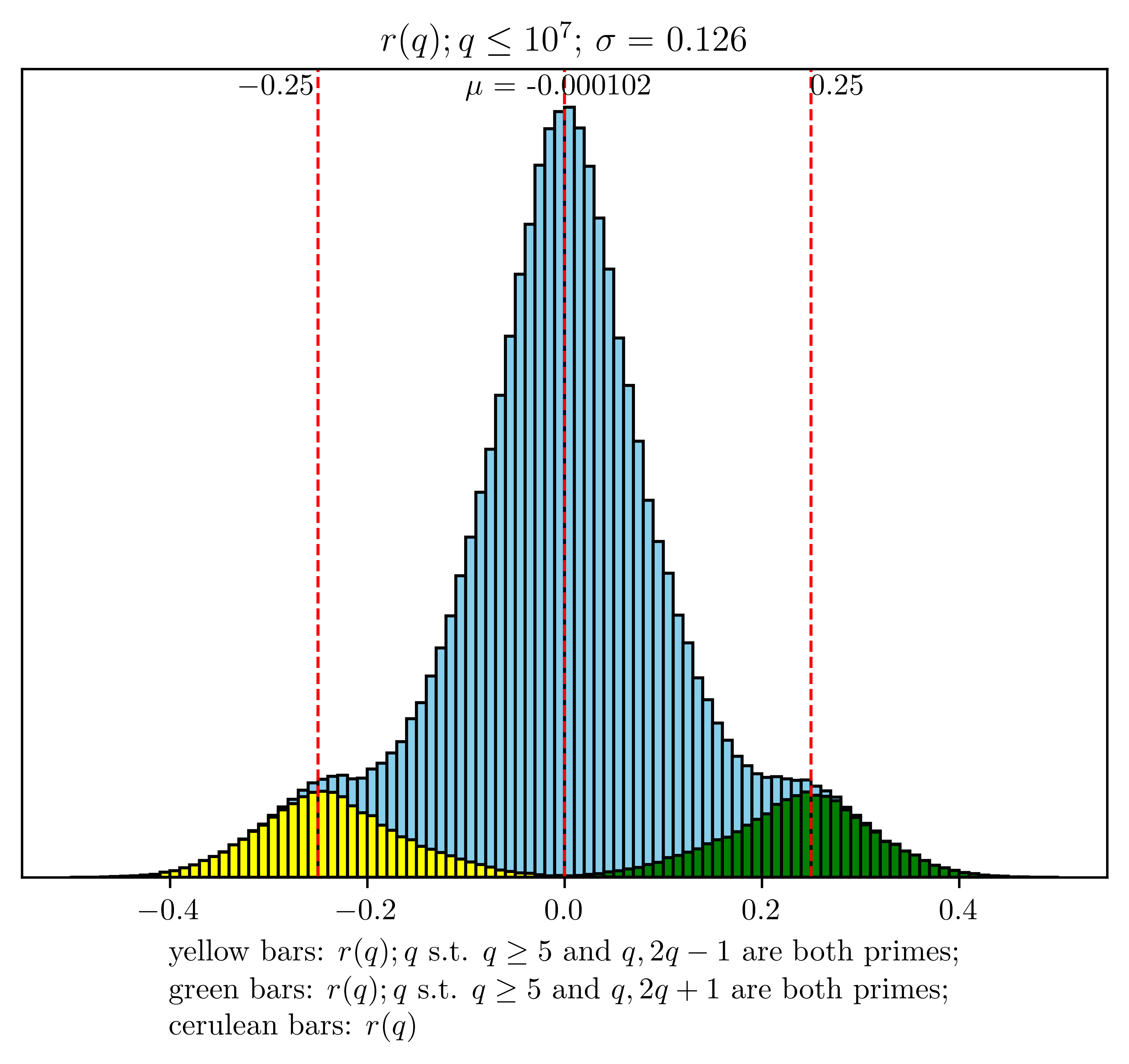}  
\end{minipage} 
\begin{minipage}{0.48\textwidth} 
\includegraphics[scale=0.5,angle=0]{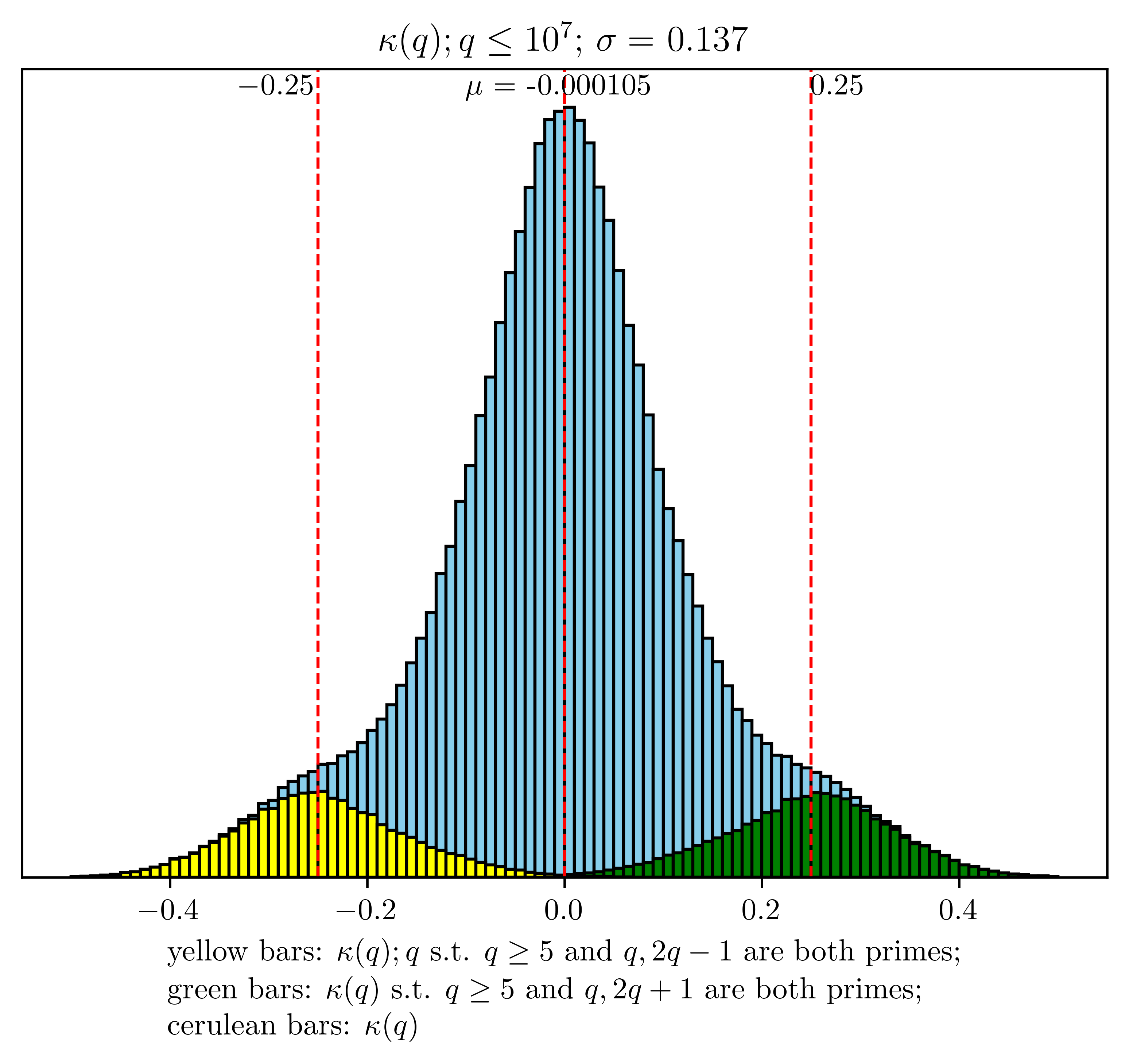}
\end{minipage} 
\caption{The same histograms of Figure \ref{fig3} but the contributions of the primes $q$ such that 
$2q+1$ is prime or $2q-1$ is prime (the ``spikes'') are superimposed.
}
\label{fig3}
\end{figure}  

\begin{figure}[H]
\scalebox{0.9}{
\begin{minipage}{0.48\textwidth} 
\includegraphics[scale=0.5,angle=0]{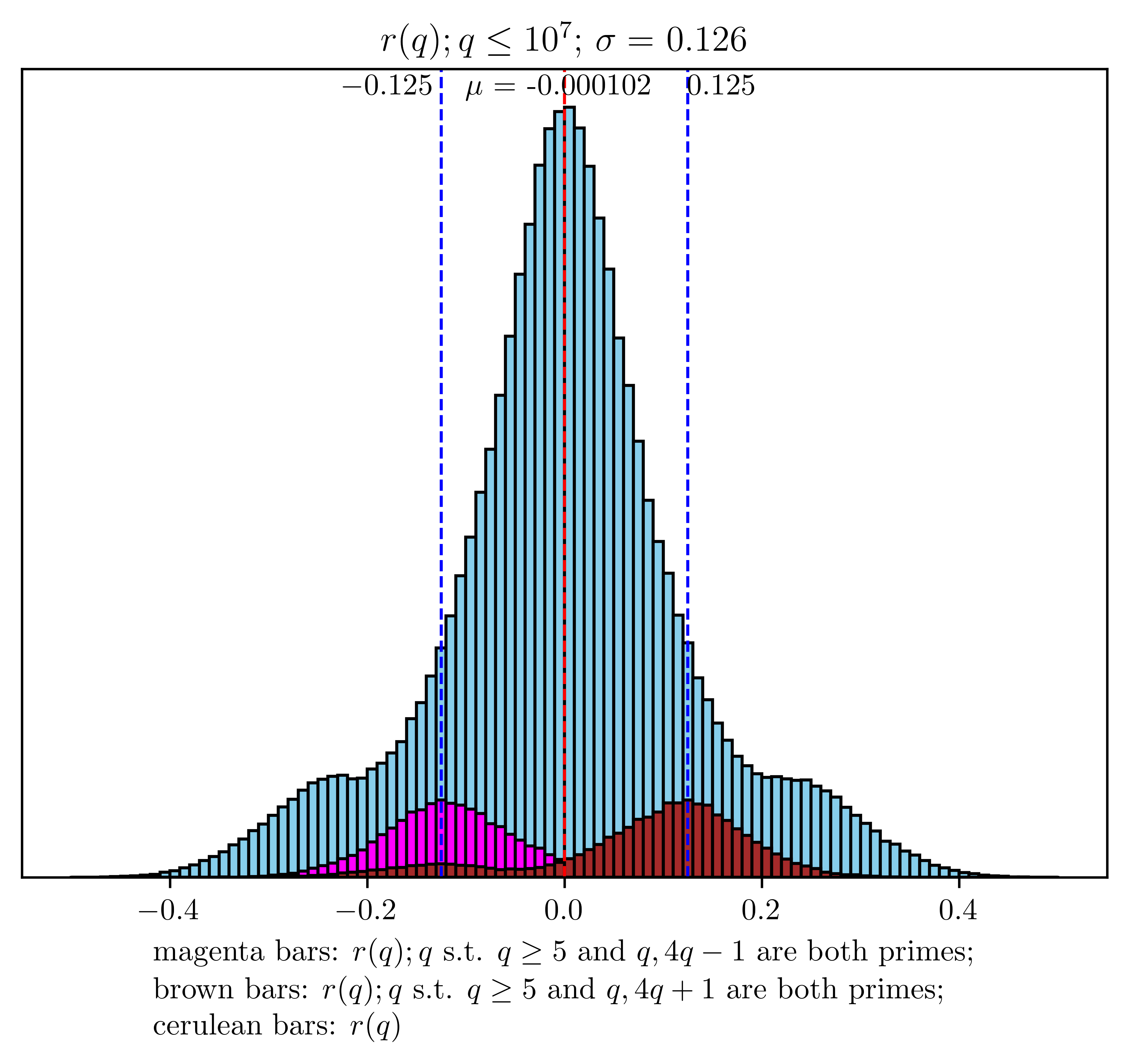}  
\end{minipage} 
\begin{minipage}{0.48\textwidth} 
\includegraphics[scale=0.5,angle=0]{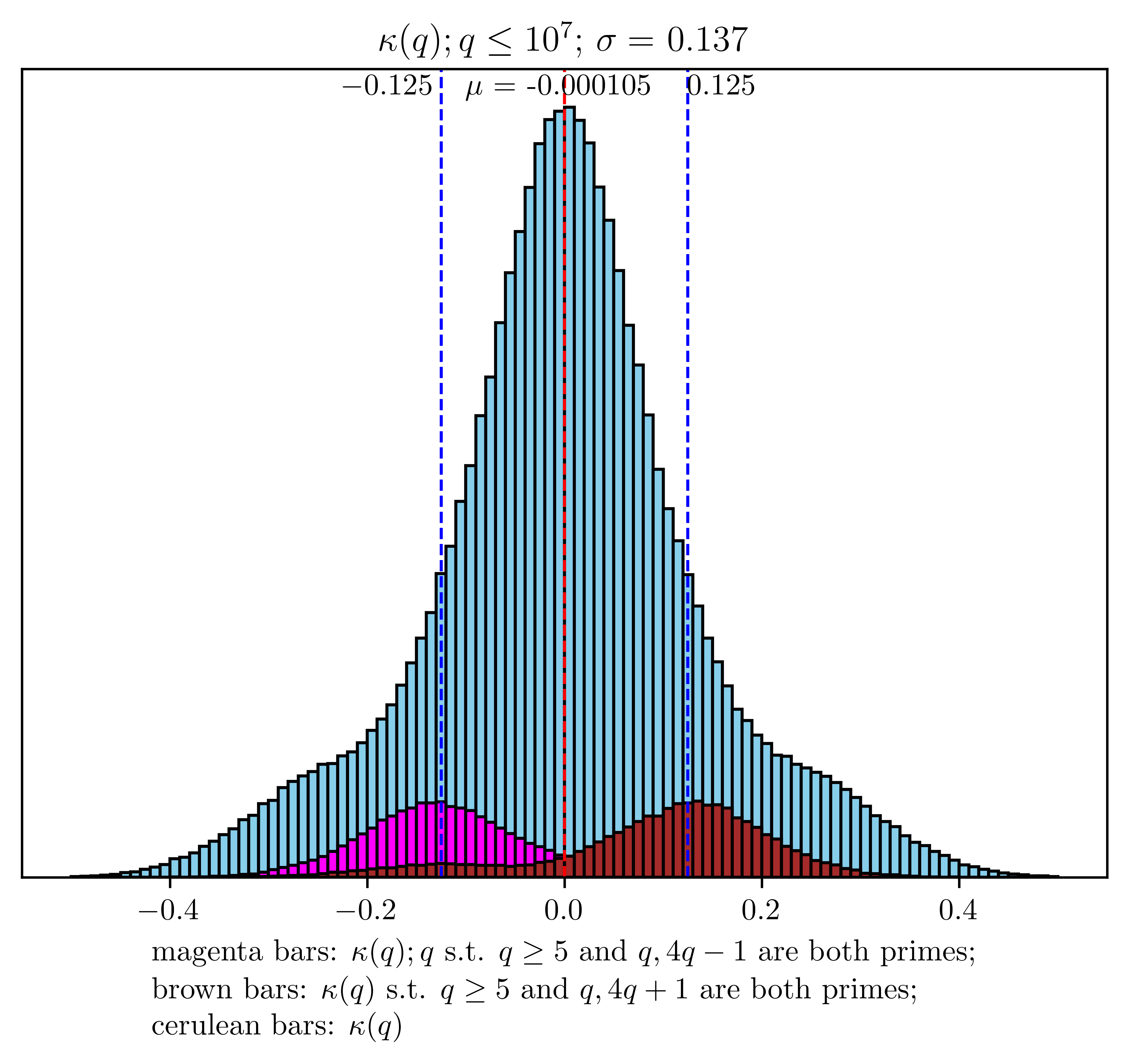}
\end{minipage} 
}
\caption{The same histograms of Figure \ref{fig3} but the contributions of the primes $q$ such that 
$4q+1$ is prime or $4q-1$ is prime (the ``spikes'') are superimposed.
}
\label{fig4}
\end{figure}

\begin{figure}[H]
\scalebox{0.9}{
\begin{minipage}{0.48\textwidth} 
\includegraphics[scale=0.5,angle=0]{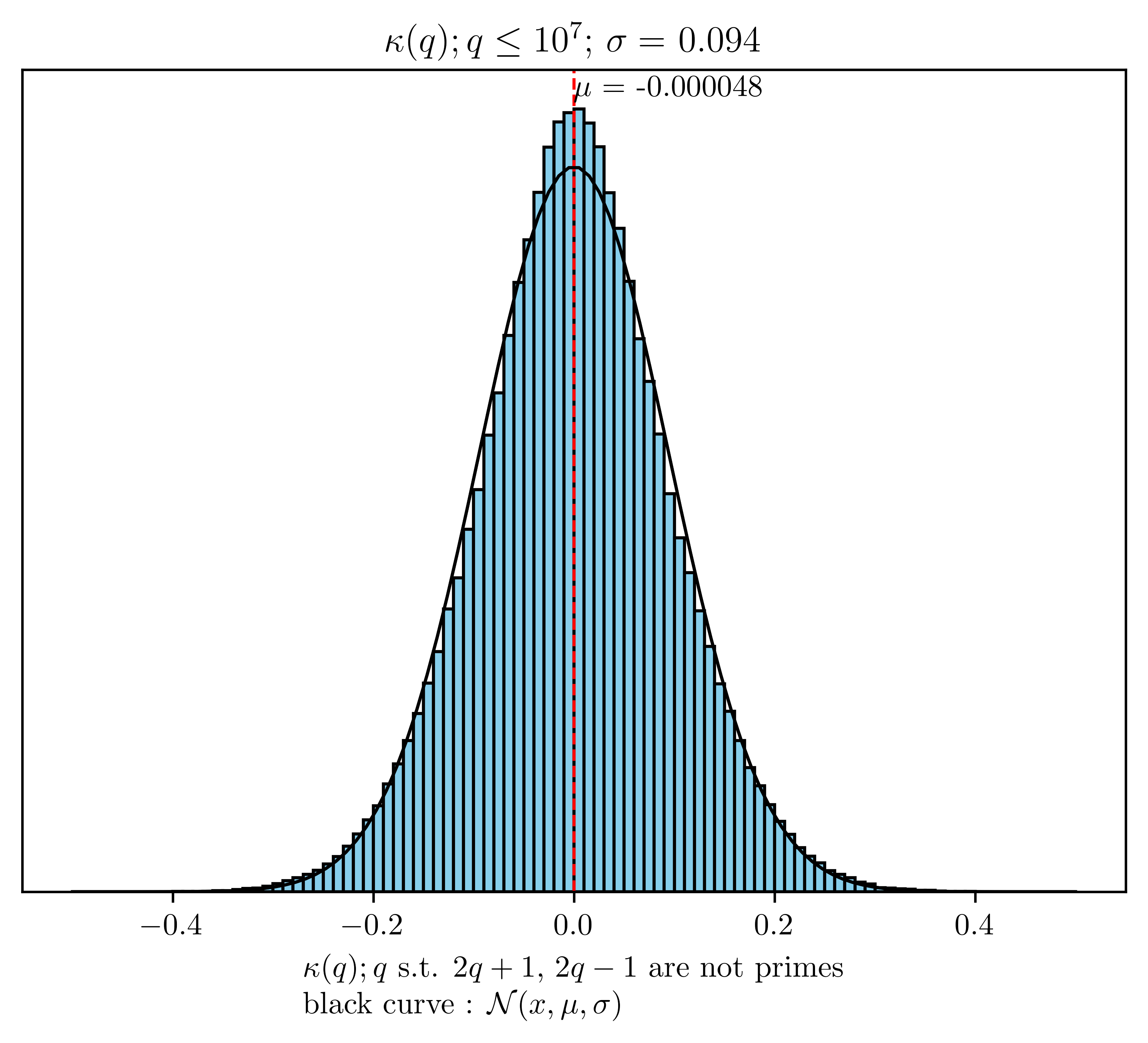}
\end{minipage} 
\begin{minipage}{0.48\textwidth} 
\includegraphics[scale=0.5,angle=0]{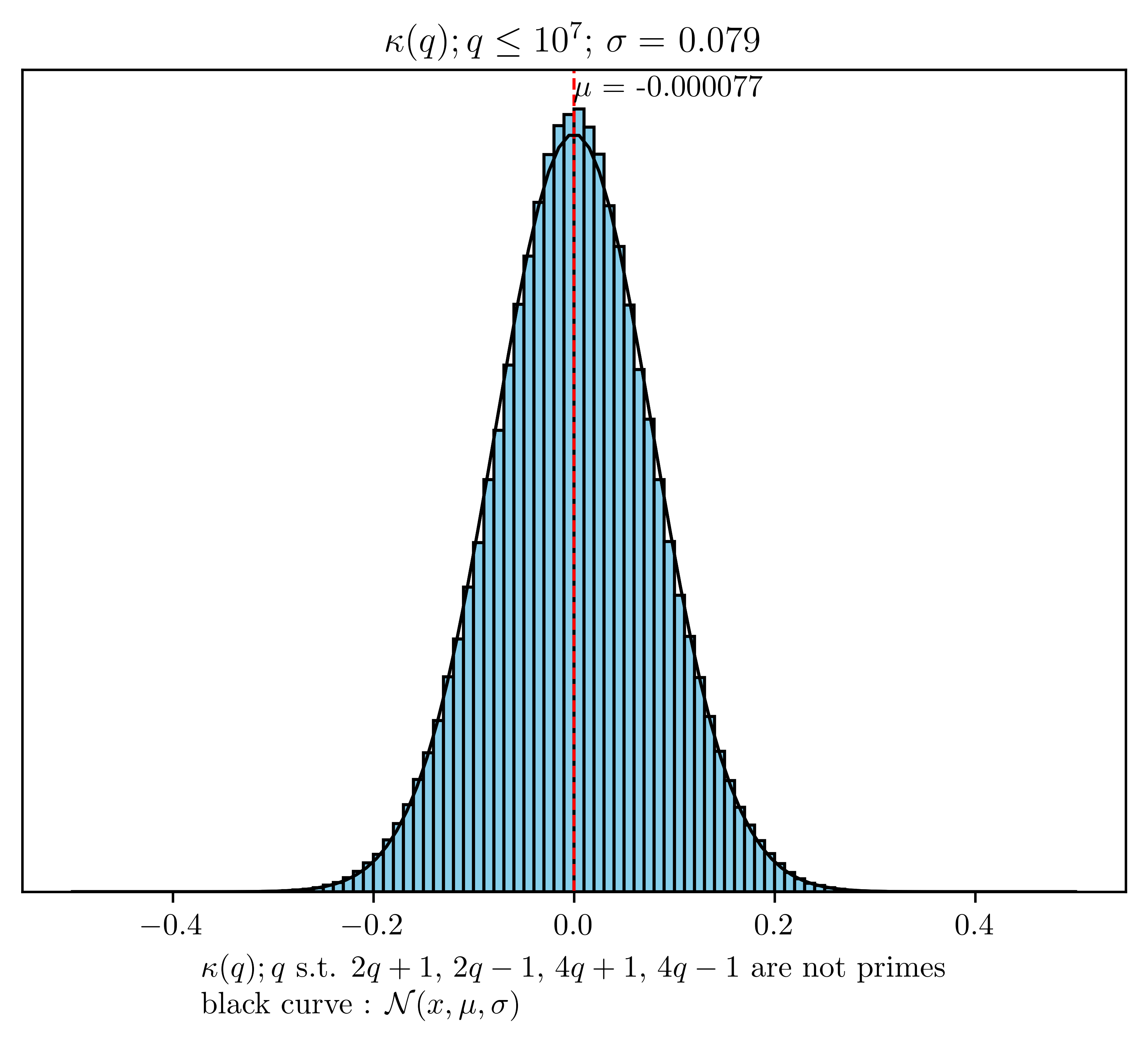}
\end{minipage} 
}
\caption{On the left: the histogram for $\kappa(q)$, $q$ prime, $5\le q\le  \bound$, such that
$2q\pm1$ are composite; on the right: idem with both $2q\pm1$ and $4q\pm1$ that are composite numbers.
The red dashed lines represent the mean values.}
\label{fig5}
\end{figure}

\begin{figure}[H]
\hskip-1cm
\scalebox{0.9}{
\begin{minipage}{0.48\textwidth} 
\includegraphics[scale=0.30675,angle=0]{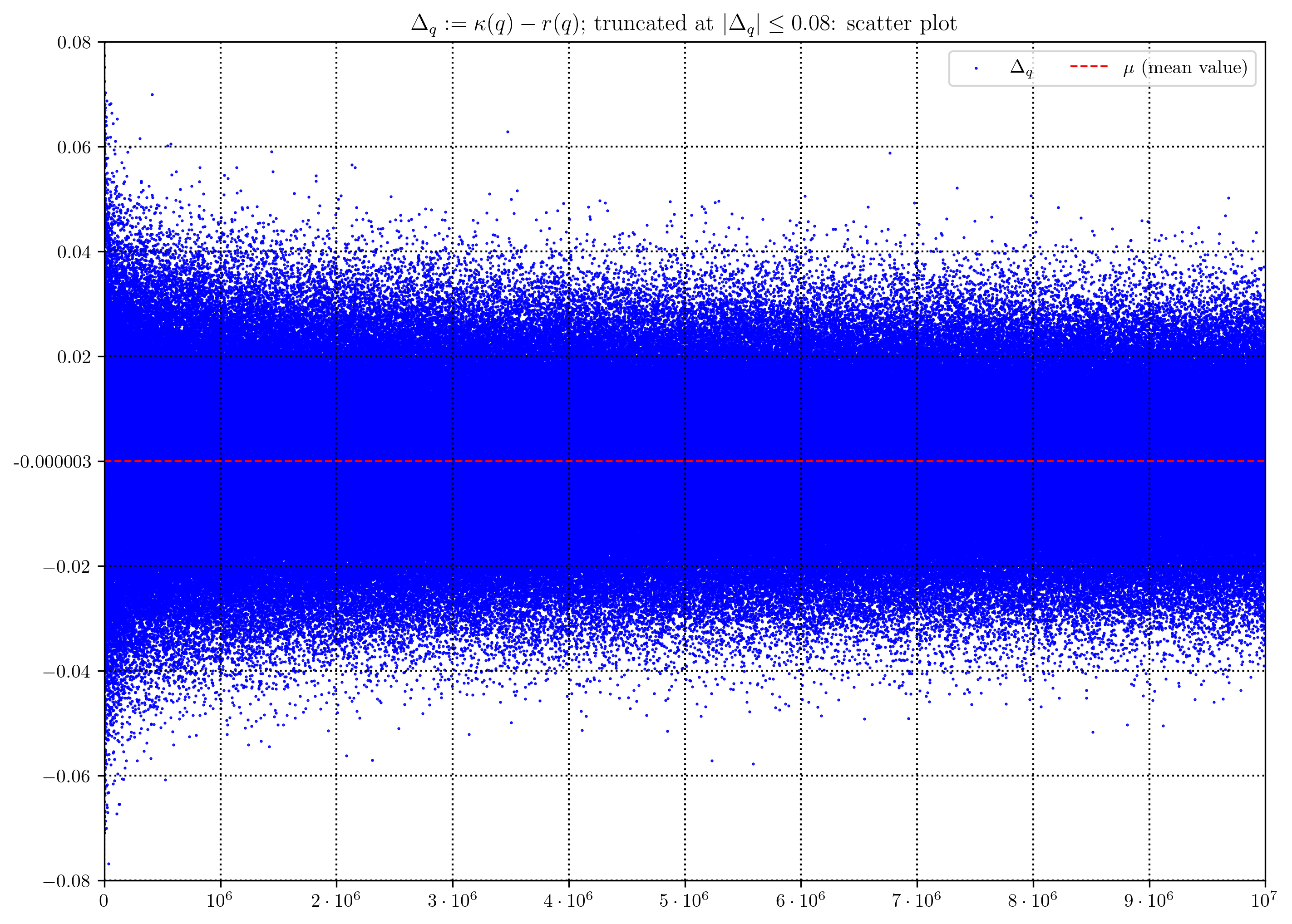}
\end{minipage} 
\hskip1cm
\begin{minipage}{0.48\textwidth} 
\includegraphics[scale=0.2975,angle=0]{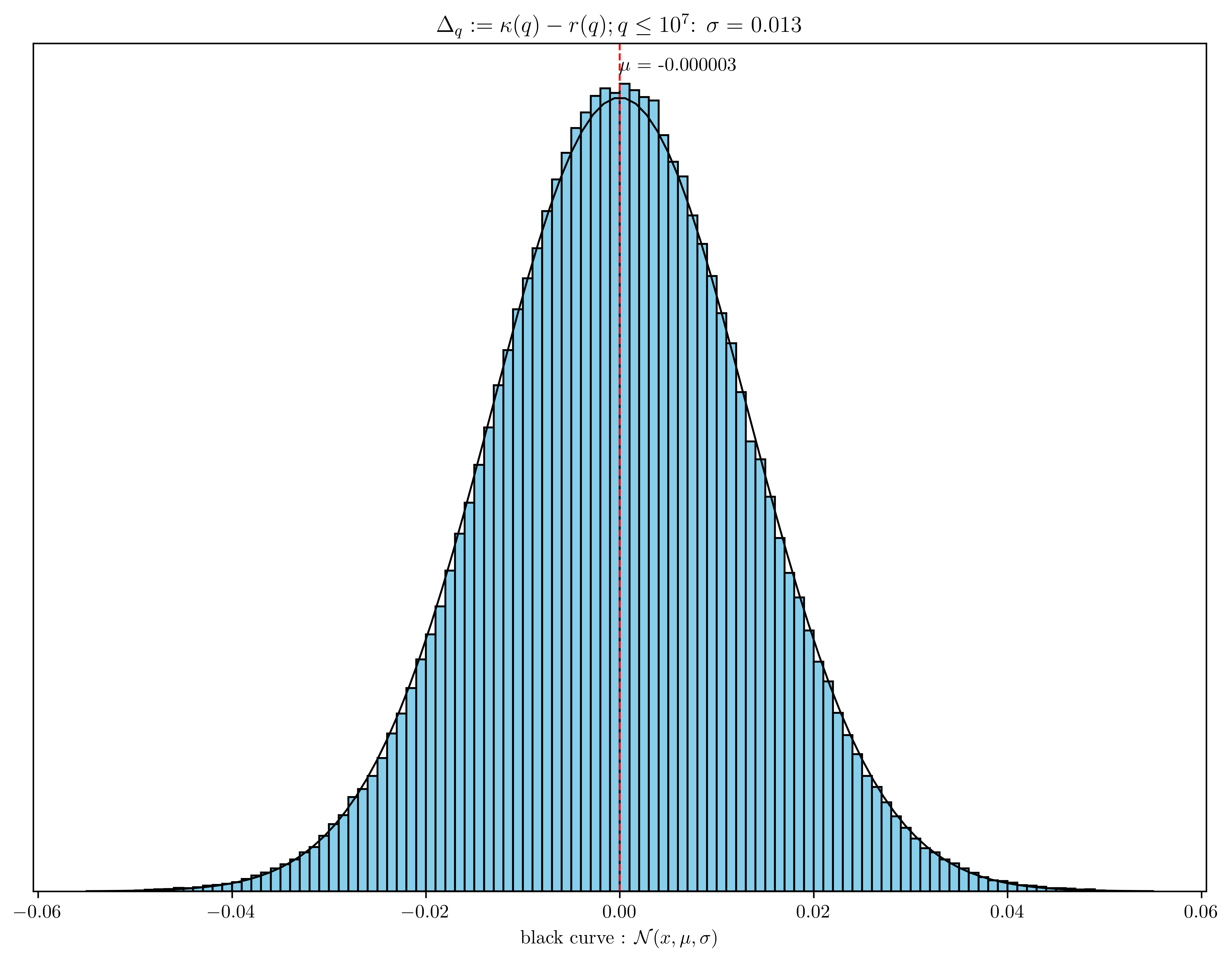}
\end{minipage} 
}
\caption{On the left:  the values of $\Delta_q = \kappa(q) - r(q)$, s.t. $| \Delta_q | \le 0.08$,  
$q$ prime, $3\le q\le \bound$; on the right: the histogram of the same values.
The red dashed lines represent the mean value.}
\label{fig6}
\end{figure}  
\clearpage
}{}

\medskip\noindent Neelam Kandhil  \par\noindent
{\footnotesize 
The University of Hong Kong, Department of Mathematics, Pokfulam, Hong Kong.
\hfil\break
e-mail: {\tt neelamkandhil091@gmail.com}}

\medskip\noindent Alessandro Languasco \par\noindent
{\footnotesize Universit\`a di Padova, Dipartimento di Matematica ``Tullio Levi-Civita'', via Trieste 63, 35121 Padova, Italy.\hfil\break
e-mail: {\tt alessandro.languasco@unipd.it}}

\medskip\noindent Pieter Moree  \par\noindent
{\footnotesize Max-Planck-Institut f\"ur Mathematik,
Vivatsgasse 7, D-53111 Bonn, Germany.\hfil\break
e-mail: {\tt moree@mpim-bonn.mpg.de}}

\medskip\noindent Sumaia Saad Eddin \par\noindent
{\footnotesize Johann Radon Institute for Computational and Applied Mathematics,\\
Austrian Academy of Sciences,
Altenbergerstrasse 69, A-4040 Linz, Austria.\hfil\break
e-mail: {\tt sumaia.saad-eddin@ricam.oeaw.ac.at}}

\medskip\noindent Alisa Sedunova \par\noindent
{\footnotesize 
Mathematics Institute, Zeeman Building, University of Warwick, Coventry CV4 7AL, England. \hfil\break
e-mail: {\tt alisa.sedunova@gmail.com}}

\end{document}